\crefname{equation}{Equation}{Equations}
\crefname{figure}{Figure}{Figures}
\crefname{lem}{Lemma}{Lemmas}
\crefname{cor}{Corollary}{Corollaries}
\crefname{prop}{Proposition}{Propositions}
\crefname{thm}{Theorem}{Theorems}
\crefname{mydef}{Definition}{Definitions}
\crefname{rem}{Remark}{Remarks}
\crefname{section}{Section}{Sections}
\newcommand{\xx}{\mathbf{x}}
\newcommand{\dd}{\mathrm{d}}
\newcommand{\Pe}{\mathrm{Pe}}
\newcommand{\discgrad}{D}
\newcommand{\dischess}{H}
\newtheorem{prop}{Proposition}
\newtheorem{lem}{Lemma}
\newtheorem{cor}{Corollary}
\newtheorem{thm}{Theorem}
\theoremstyle{definition}
\newtheorem{mydef}{Definition}
\newtheorem{rem}{Remark}
\numberwithin{prop}{section}
\numberwithin{lem}{section}
\numberwithin{cor}{section}
\numberwithin{thm}{section}
\numberwithin{mydef}{section}
\numberwithin{rem}{section}
\title{Convergence of a finite volume scheme for a model for ants}
\author{Maria Bruna, Markus Schmidtchen, Oscar de Wit}
\begin{document}
\maketitle
\begin{abstract}
    We develop and analyse a finite volume scheme for a nonlocal active matter system known to exhibit a rich array of complex behaviours. The model under investigation was derived from a stochastic system of interacting particles describing a foraging ant colony coupled to pheromone dynamics. In this work, we prove that the unique numerical solution converges to the unique weak solution as the mesh size and the time step go to zero. We also show discrete long-time estimates, which prove that certain norms are preserved for all times, uniformly in the mesh size and time step. In particular, we prove higher regularity estimates which provide an analogue of continuum parabolic higher regularity estimates. Finally, we numerically study the rate of convergence of the scheme, and we provide examples of the existence of multiple metastable steady states.
\end{abstract}

\section*{Mathematics Subject Classification}
35K55, 35B36, 65M08, 65M12, 35Q92, 92D50.

\section{Model introduction and motivation}

Active matter exhibits richly complex collective motion, ranging from flocking birds and schooling fish \cite{hemelrijk2012schools} to pattern-forming skin cells \cite{painter1999stripe}, turbulence in suspensions of microswimmers \cite{alert2022active}, and lane-forming pedestrians \cite{bacik2023lane}.
There are two main approaches to modelling collective phenomena in active matter: microscopic or agent-based models or macroscopic or PDE-based models \cite{BDT2017,BDT2019,BCT2022}. The former approach is very high-dimensional due to the large number of individuals typically involved in active matter systems, and quickly becomes analytically and computationally intractable. This is why macroscopic PDE models are customarily favoured to investigate the system's behaviour. Moreover, analytical challenges--commonly rooted in metastability and multiple bifurcation branches--highlight the importance of developing reliable numerical methods for macroscopic active matter models.
 
\begin{subequations}
\label{full_problem_intro}
In this paper, we develop and analyse a finite volume scheme for a nonlinear Fokker--Planck equation derived in \cite{bruna2024lane} as the continuum limit of a microscopic active matter model for ants:
 \begin{align}\label{eq:f}
 	 \partial_t f & =\nabla_\xx\cdot \left( D_T\nabla_\xx f-\Pe\mathbf{e}_\theta f \right) +\partial_\theta \left( \partial_\theta f-\gamma B[c] f \right), \quad f(t=0,\xx,\theta)=f^0(\xx,\theta),
 \end{align}
where $f = f(t, \xx, \theta): \mathbb R_+ \times \mathbb{T}_1^2 \times \mathbb T_{2\pi}\to \mathbb R_+$ is a one-particle probability density function that quantifies the probability to find a particle at time $t$ with position $\xx \in \mathbb{T}_1^2$ and orientation $\theta \in \mathbb T_{2\pi}$, and $B[c]$ is some functional that represents a nonlocal interaction between particles through a chemical field $c = c(t, \xx)$. In \cref{eq:f}, $D_T>0$ is the translational diffusion coefficient, $\Pe>0$ is the P\'eclet number (or dimensionless self-propulsion speed), $\mathbf{e}_\theta = (\cos\theta, \sin\theta)^\mathsf{T}$ is the direction of self-propulsion, and $\gamma>0$ is the interaction strength. 

Equation \eqref{eq:f} was derived in \cite{bruna2024lane} as the formal mean-field limit of a colony of ants interacting through the laying and sensing of pheromones. In particular, the pheromone field is coupled to the particle distribution through the elliptic equation
\begin{equation} \label{elliptic_c}
	\Delta c-\alpha c+\rho = 0,
\end{equation}
where $\alpha$ is a chemical decay rate and particles' spatial probability density is given by
\begin{equation}
	\rho(t,\xx) = \int_{\mathbb T_{2\pi}} f(t,\xx, \theta) \dd \theta.
\end{equation}
Ants interact nonlocally via the pheromone field according to 
\begin{equation}
    B_\lambda[c]=\mathbf{n}_\theta\cdot\nabla_\xx c(\xx+\lambda\mathbf{e}_\theta),\label{eq:Blambda}	
\end{equation}
where $\mathbf{n}_\theta=(-\sin\theta,\cos\theta)^\mathsf{T}$. \cref{eq:Blambda} models how ants adjust their orientation towards higher chemical concentrations by sensing at position $\xx+\lambda\mathbf{e}_\theta$, where $\lambda \ge 0$ represents the sensing distance of ants. This can be thought of as the location of antennas relative to their body centre. 
\end{subequations}

The solutions of model \eqref{full_problem_intro} were studied via a linear stability analysis and time-dependent numerical simulations in  \cite{bruna2024lane}. Specifically, for $\lambda>0$, \cref{full_problem_intro} admits three types of steady states depending on the model parameters: the trivial (constant) steady state, cluster stationary states, and lane-like steady states. In particular, in the region in the $(\gamma,\Pe)$-plane where the constant state is linearly unstable, numerical solutions of \cref{full_problem_intro} were found to either converge to stationary clusters or bidirectional lanes, which a thin region where bistability between the two types of nontrivial steady states is possible. 

Similar lane-like steady states were also found in \cite{bertucci2024curvature}, where they used a related interaction term $B[c]$ (which we discuss further below).

Model \eqref{full_problem_intro} is a nonlinear, nonlocal extension of the Fokker--Planck equation associated with a single active Brownian particle [corresponding to setting $\gamma = 0$ in \cref{eq:f}]. The self-propulsion term, premultiplied by the P\'eclet number $\Pe$, disrupts the gradient flow structure $\partial_t f = \nabla\cdot(f\nabla\delta\mathcal{F}/\delta f)$, often encountered in macroscopic PDE models for interacting particles.
The non-gradient character of \cref{eq:f} complicates the analysis as tools from gradient flows, such as a general LaSalle invariance principle \cite{carrillo2023invariance}, are not available. Despite these challenges, the well-posedness solutions $f\in L^2(0,T;H^1)$ for any time $T>0$ was proven in \cite{bruna2024lane}. The estimates rely on $D_T>0$ and classical Sobolev--Poincar\'e and Gagliardo--Nirenberg inequalities. Particularly, regularity estimates for the spatial density $\rho$, which satisfies 
\begin{equation}\label{eq:rho}
\partial_t\rho=\nabla\cdot(D_T\nabla \rho-\Pe\mathbf{p}),
\end{equation}
where $\mathbf{p}(t,\xx)=\int_0^{2\pi}\mathbf{e}_\theta f(t,\xx,\theta)\dd\theta$ is the polarization (or average orientation),
can be used to show regularity estimates for the full density $f$. The analysis in \cite{bruna2024lane} also gives long-time estimates showing that there cannot be blow up at infinite time, such that $f\in L^\infty(0,+\infty;L^\infty)$. A remaining challenge is the characterisation of the pattern-forming steady states beyond a stability analysis of the homogeneous state. 

Model \eqref{full_problem_intro} is a compelling example of pattern formation in a nonlinear active matter PDE model. Mathematical tools to characterise the existence of pattern formation were established for collectives of pedestrians in \cite{burger2016lane,bruna2022phase,burger2023well}, flocking birds \cite{degond2013macroscopic,briant2022cauchy}, collectives of migrating cells \cite{carrillo2018zoology} and suspensions of micro-swimmers \cite{ohm2022weakly,albritton2023stabilizing}. Despite the absence of a gradient flow structure, these papers provide the mathematical foundations for these pattern formations through bifurcation analysis, numerical methods, well-posedness analysis, and (non-)linear stability analysis. 

Our primary objective is to develop a numerical scheme that accurately reproduces the behaviour of \eqref{full_problem_intro}. Finite-volume schemes are particularly well-suited for this purpose, as their structure inherently ensures mass conservation and preserves the nonnegativity of the initial data. 
Extensive research exists on the numerical analysis of finite-volume methods \cite{eymard2000finite,filbet2006finite,zhou2017finite,chainais2007asymptotic,bailo2020convergence}, providing rigorous foundations for their convergence and error analysis across various PDE models. 
In particular, the works \cite{filbet2006finite,bailo2020convergence} are central to our analysis. 
Ref. \cite{filbet2006finite} demonstrates the convergence of a finite-volume scheme for the parabolic-elliptic Keller--Segel equation, which incorporates \eqref{elliptic_c}, using a compactness argument in $L^2(0,T;H^1)$.
A similar convergence result is given in \cite{bailo2020convergence} for a more general one-dimensional nonlinear aggregation-diffusion equation. Their analysis of general drift terms provides inspiration for treating the drift terms in our model, following a priori estimates for the chemical field in the discrete setting. These estimates can be obtained by translating the local well-posedness estimates in $L^2(0,T;H^1)$ of \cite{bruna2024lane} to the discrete setting, using discrete analogues of Sobolev--Poincar\'e inequalities \cite{bessemoulin2015discrete}.
In addition to a convergence result, we show the analysis for long-time estimates for the numerical solutions. As part of this, we show that a standard $H^1$-regularity parabolic estimate has a finite-volume equivalent, allowing us to show $L^2$- and $L^\infty$- long-time estimates. A crucial component of this analysis is a recent result on the Morrey inequality within the finite volume framework \cite[Theorem 4.1]{porretta2020note}.

The paper is organised as follows. In \cref{sec:definition}, we define the weak solutions of \eqref{full_problem_intro}, introduce the numerical scheme and state the main convergence result. In \cref{sec:fintest}, we provide the required a priori estimates for the convergence result, which is proven in \cref{sec:conv}. 
\cref{sec:highreg} covers the long-time estimates for the scheme. Finally, we present a series of numerical results in \cref{sec:numerics} showcasing the performance of our scheme. We show the scheme's order of accuracy and numerically verify the long-time behaviour observed in \cite{bruna2024lane,bertucci2024curvature}, confirming the convergence to nontrivial steady states. We conclude by showing that quasi-steady states comprising multiple spots or lanes exist and that they eventually destabilise and evolve to a single spot or lane.

\section{Definition of the scheme and function spaces} \label{sec:definition}
We begin by introducing some notation. Throughout, let $\xi = (\xx, \theta)$ denote the three-dimensional coordinate vector, $\Omega: = \mathbb T_1^2$ be a two-dimensional domain with periodic boundaries extending from $[-\frac{1}{2},\frac{1}{2}]$, $\Sigma= \Omega\times\mathbb T_{2 \pi}$ and $\Sigma_T = \Sigma \times (0, T)$. We will consider the nonlocal interaction \eqref{eq:Blambda}, as well as its zeroth and first-order Taylor expansions in $\lambda$, which we denote by $B_0$ and $B_\tau$ respectively:
\begin{subequations}
	\label{B_expanded}
	\begin{align} 
    B_0[c]&=\mathbf{n}_\theta\cdot\nabla_\xx c(\xx), \label{eq:B0}\\
    B_\tau[c]&=\mathbf{n}_\theta\cdot\nabla_\xx c(\xx)+\tau \mathbf{n}_\theta\cdot\nabla^2_\xx c(\xx)\mathbf{e}_\theta.\label{eq:Btau}
\end{align}
\end{subequations}
The sensing strategy $B_0$ corresponds to ants ``without antennas" sensing at their centre of mass. The $B_\tau$ term can be derived as the limit of a sensing mechanism for two antennas spread apart by some angle \cite{perna2012individual,bertucci2024curvature}. Typically, the terms $B_\lambda$ and $B_\tau$ lead to either aggregation or lane formation, whereas the term $B_0$ only leads to aggregation spots. Let us recall our notion of weak solution of \cref{full_problem_intro}.
\begin{mydef}[Weak solution]\label{def:solf}
    A function $f\in L^2(0,T;H^1(\Sigma))$ is a weak solution for \cref{full_problem_intro} if $f(0)=f^0 \in L_+^1(\Sigma)\cap L^\infty_+(\Sigma)$ and
    \begin{equation}\label{eq:weakeq}
        \int_0^T\int_\Sigma \left[f\partial_t\varphi -(D_T\nabla_\xx f-\Pe\mathbf{e}_\theta f)\cdot\nabla_\xx\varphi-(\partial_\theta f-\gamma B[c] f)\partial_\theta\varphi \right]\dd\xi\dd t + \int_\Sigma f(0)\varphi(0)\dd\xi = 0,
    \end{equation}
    for all $\varphi\in C^\infty(\overline{\Sigma_T})$ such that $\varphi(T)=0$ and $c \in L^2(0,T; H^2(\Omega))$ is the unique strong solution of \cref{elliptic_c}.
    \end{mydef}

Next, we introduce a discretisation of the domain $\Sigma$ and a time interval $(0, T)$.
\begin{mydef}[Discretisation of the domain]
For some $N_x, N_y, N_\theta \in \mathbb N$, we introduce a family of cells
      \begin{equation}
        C_{i,j,k}=[x_{i-1/2},x_{i+1/2})\times[y_{j-1/2},y_{j+1/2})\times[\theta_{k-1/2},\theta_{k+1/2}),
    \end{equation}
    for
    \begin{align}
        (i,j,k) \in \mathcal{I} = \{(i,j,k) \,|\,  1\leq i\leq N_x,1\leq j\leq N_y,1\leq k\leq N_\theta\},
    \end{align}
 where
 \begin{align*}
    x_{i-1/2} = -\frac{1}{2}+(i-1)\Delta x, \quad y_{j-1/2}= -\frac{1}{2}+(j-1)\Delta y, \quad \text{ and } \quad \theta_{k-1/2}=(k-1)\Delta \theta,    
 \end{align*}
 with $\Delta x=1/N_x,\Delta y=1/N_y$ and $\Delta\theta=2\pi/N_\theta$. We note that $\bigcup_{\mathcal I} \overline{C_{i,j,k}}=\Sigma$.
    We also define the dual mesh cells as 
    \begin{equation}
        C_{i+1/2,j,k}=[x_{i},x_{i+1})\times[y_{j-1/2},y_{j+1/2})\times[\theta_{k-1/2},\theta_{k+1/2}),
    \end{equation}
  with $x_i=-\frac{1}{2}+(i-\frac{1}{2})\Delta x$ for $1 \le i \le N_x$ (and $x_{N_x + 1} \equiv x_1$), and analogously for the $y$- and $\theta$-direction. We use the notation $\xx_{i,j}=(x_i,y_j)$.
  
For the time interval, we introduce the subintervals $[t^n, t^{n+1})$, where $t^n=n\Delta t$, such that 
\begin{equation}
    \bigcup_{n=0,1,\dots,N_T-1}[t^n ,t^{n+1})=[0,T),
\end{equation}
and $N_T=T/\Delta t$ is a positive integer.
\end{mydef}

We are now ready to introduce the numerical scheme using the above notation.
\begin{mydef}[Definition of the scheme]\label{def:scheme}
    We construct a discretised version of the initial data $f^0$ via the cell averages
\begin{equation}
    f^0_{i,j,k} = \frac{1}{\Delta \xi}\int_{C_{i,j,k}}f^0(\xi)\dd \xi,
\end{equation}
for $(i,j,k) \in \mathcal{I}$, where $\Delta\xi =\Delta\xx\Delta\theta$, and $\Delta\xx=\Delta x\Delta y$. To discretise \cref{eq:f}, we rewrite it in divergence form $\partial_t f=-\nabla_\xi\cdot \mathbf{F}$, with flux $\mathbf{F}=(F^x,F^y,F^\theta)$ 
\begin{align}
\begin{aligned}
	F^x(t,\xi) &=-\left (D_T\partial_x f-\Pe\cos(\theta)f \right),\\
    F^y(t,\xi) & =- \left( D_T\partial_y f-\Pe\sin(\theta)f \right),\\
    F^\theta(t,\xi) & =- \left (\partial_\theta f-\gamma B[c]f \right).
\end{aligned}
\end{align}
Integrating \cref{eq:f} over a test cell, $[t^{n},t^{n+1})\times C_{i,j,k}$, yields
\begin{equation}
\begin{split}\label{eq:fflux}
    \int\limits_{C_{i,j,k}}f(t^{n+1},\xi)\dd\xi-\int\limits_{C_{i,j,k}}f(t^n,\xi)\dd\xi=&-\int\limits_{t^n}^{t^{n+1}}\int\limits_{C_{i,j,k}}\partial_x F^x(t,\xi)\dd\xi\dd t-\int\limits_{t^n}^{t^{n+1}}\int\limits_{C_{i,j,k}}\partial_y F^y(t,\xi)\dd\xi\dd t\\&-\int\limits_{t^n}^{t^{n+1}}\int\limits_{C_{i,j,k}}\partial_\theta F^\theta(t,\xi)\dd\xi\dd t,
\end{split}
\end{equation}
Using the fundamental theorem turning the derivatives on the right-hand side into finite differences, we can approximate Equation \eqref{eq:fflux} by the backward finite volume scheme
\begin{subequations}
    \label{eq:scheme}
\begin{equation} \label{eq:schemef}
    \begin{split}
        \frac{f^{n+1}_{i,j,k}-f^n_{i,j,k}}{\Delta t}=&-\frac{1}{\Delta x}\left(F^{x,n+1}_{i+1/2,j,k}-F^{x,n+1}_{i-1/2,j,k}\right)-\frac{1}{\Delta y}\left(F^{y,n+1}_{i,j+1/2,k}-F^{y,n+1}_{i,j-1/2,k}\right)\\
        &-\frac{1}{\Delta\theta}\left(F^{\theta,n+1}_{i,j,k+1/2}-F^{\theta,n+1}_{i,j,k-1/2}\right),
    \end{split}
\end{equation}
where the discrete fluxes are defined as 
\begin{align}
\begin{aligned}
	    F^{x,n}_{i+1/2,j,k}&=-\left(D_T\dd_x f^n_{i+1/2,j,k}-\Pe U^n_{i+1/2,j,k}\right),\\
    F^{y,n}_{i,j+1/2,k}&=-\left(D_T\dd_y f^n_{i,j+1/2,k}-\Pe V^n_{i,j+1/2,k}\right),\\
    F^{\theta,n}_{i,j,k+1/2}&=-\left(\dd_\theta f^n_{i,j,k+1/2}-\gamma W^n_{i,j,k+1/2}\right),
    \end{aligned}
\end{align}
and discrete partial derivatives defined as
\begin{align}
    \label{eq:disclap}
    \begin{split}
    \dd_x f_{i+1/2,j,k} &= \dfrac{1}{\Delta x}(f_{i+1,j,k}-f_{i,j,k}),\\
    \dd_y f_{i,j+1/2,k} &= \dfrac{1}{\Delta y}(f_{i,j+1,k} - f_{i,j,k}),\\
    \dd_\theta f_{i,j,k+1/2} &= \dfrac{1}{\Delta \theta}(f_{i,j,k+1}-f_{i,j,k}).\\
    \end{split}
\end{align}
The upwind velocities for the drift terms are, using the notation $(\cdot)^+=\max(\cdot,0),(\cdot)^-=\min(\cdot,0)$,
\begin{align} \label{eq:scheme_fluxes}
\begin{aligned}
    U^{n}_{i+1/2,j,k} & =(\cos \theta_k)^+ f^n_{i,j,k}+(\cos \theta_k )^- f^n_{i+1,j,k},\\
    V^{n}_{i,j+1/2,k} &=(\sin \theta_k )^+ f^n_{i,j,k}+(\sin \theta_k )^- f^n_{i,j+1,k},\\
    W^{n}_{i,j,k+1/2} &=(B[c^n]_{i,j,k+1/2})^+ f_{i,j,k}+(B[c^n]_{i,j,k+1/2})^- f_{i,j,k+1}.
    \end{aligned}
\end{align}
Next, we discretise \cref{elliptic_c} using finite differences  
\begin{equation}
    \label{eq:discc}
    0=\left(\dd_x^2+\dd_y^2\right)c_{i,j}^n-\alpha c_{i,j}^n+\rho_{i,j}^n,
\end{equation}
where the Laplacian terms are defined as 
\begin{equation}
    \dd_x^2 c^n_{i,j}=\frac{c^n_{i+1,j} - 2c^n_{i,j} + c^n_{i-1,j}}{\Delta x^2},
\end{equation}
and analogously in the $y$-direction. Here, $\rho_{i,j}^n=\sum_k f_{i,j,k}^n\Delta\theta$ denotes the discretised spatial density.

Lastly, we use the following discretisations for the three different modelling choices for the interaction term $B[c]$. 
\begin{itemize}
    \item 
For $B_0[c]$, we define 
\begin{equation} \label{eq:B0_sch}
    B_0[c]_{i,j,k+1/2}=\mathbf{n}(\theta_{k+1/2})\cdot\discgrad c_{i,j},
\end{equation} 
and the gradient is discretised as a centred finite difference
\begin{equation}\label{eq:cgrad}
    \discgrad c_{i,j}=  \frac{1}{2}\begin{pmatrix}
        \dfrac{1}{\Delta x}(c_{i+1,j}-c_{i-1,j})\\[1em]
        \dfrac{1}{\Delta y}(c_{i,j+1}-c_{i,j-1})
    \end{pmatrix}=\begin{pmatrix}
        \discgrad_x c_{i,j}\\ \discgrad_y c_{i,j}
    \end{pmatrix}=\frac{1}{2}\begin{pmatrix}
        \dd_x c_{i+1/2,j}+\dd_x c_{i-1/2,j}\\
        \dd_y c_{i,j+1/2}+\dd_y c_{i,j-1/2}
    \end{pmatrix}.
\end{equation} 

\item For $B_\lambda[c]$, we define
\begin{equation}\label{eq:blambdadisc}
    B_\lambda[c]_{i,j,k+1/2}=\mathbf{n}(\theta_{k+1/2})\cdot\discgrad c_{i,j,k+1/2},
\end{equation}
where $c_{i,j,k+1/2}$ approximates $c(\xx_{i,j}+\lambda\mathbf{e}(\theta_{k+1/2}))$ and it is defined using nearest-neighbour interpolation to access $c$ at the points $\xx_{i,j}+\lambda\mathbf{e}(\theta)$, see \cref{def:discblambda} for a full definition. 

\item For $B_\tau[c]$, we define \begin{equation}\label{eq:btaudisc}
    B_\tau[c]_{i,j,k+1/2}=\mathbf{n}(\theta_{k+1/2})\cdot\discgrad c_{i,j}+\tau\mathbf{n}(\theta_{k+1/2})\cdot\dischess c_{i,j}\mathbf{e}(\theta_{k+1/2}).
\end{equation}
where the discrete Hessian is defined as
\begin{equation}
    \dischess c_{i,j} =
    \begin{pmatrix}
        \dd_x^2 c_{i,j} & \discgrad_x \discgrad_y c_{i,j}\\
        \discgrad_x \discgrad_y c_{i,j} & \dd_y^2 c_{i,j}
    \end{pmatrix},
\end{equation}
where the off-diagonal terms read
$$\discgrad_x \discgrad_y c_{i,j} =  \frac{1}{4\Delta\xx} \left(  c_{i+1,j+1}-c_{i-1,j+1}-c_{i+1,j-1}+c_{i-1,j-1}\right).
$$
\end{itemize}
\end{subequations} 
\end{mydef}
\begin{mydef}[Piecewise constant interpolations]
    \label{def:pwc-soln}
    Given $(f_{i,j,k}^n)_{(i,j,k,n)}$ a discrete solution to the scheme, we define the piecewise constant interpolation as 
    \begin{equation}
        f_h(t,x,y,\theta)= f^n_{i,j,k}, \quad  \mathrm{for} \quad (t,x,y,\theta)\in [t^{n-1},t^{n})\times C_{i,j,k},
    \end{equation}
    where the subscript $h$ corresponds to the mesh size
    \begin{equation}
        h=\max\{\Delta t,\Delta x,\Delta y,\Delta\theta\}.
    \end{equation}
    We also use straightforward analogous definitions of piecewise constant interpolations for functions with only $(t,x,y)$ as variables.

We define the discrete gradients on the dual mesh as
    \begin{equation}
        \dd_x f_h(t,x,y,\theta) = \dd_x f^n_{i+1/2,j,k}\quad  \mathrm{for} \quad (t,x,y,\theta)\in [t^{n-1},t^n) \times C_{i+1/2,j,k},
    \end{equation}
    and analogously for the gradients in $y$ and $\theta$.
\end{mydef}
We show the following well-posedness result for finite-volume scheme of \cref{def:scheme}. We postpone its proof to the next section.
\begin{prop}[Existence, uniqueness and nonnegativity]\label{prop:exist}
    Let $f^0\in L^1_+(\Sigma)\cap L^\infty_+(\Sigma)$ be a nonnegative initial datum with mass one. Then, for any $N_x, N_y, N_\theta, N_T \in \mathbb N$ there is a unique nonnegative solution $(f^n_{i,j,k})$ to the scheme of \cref{eq:scheme} for $(n,i,j,k)\in\{0,1,\dots,N_T\}\times\mathcal{I}$. Moreover, the mass is conserved and there holds $\|f_h(t)\|_{L^1}=\|f^0\|_{L^1}$, for all $t\in[0,T)$.
\end{prop}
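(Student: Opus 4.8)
The plan is to prove the three claims—existence/uniqueness, nonnegativity, and mass conservation—essentially in reverse order of difficulty, establishing mass conservation and the structural properties first, then using them to bootstrap existence via a fixed-point or degree-theoretic argument, and finally proving nonnegativity via a discrete maximum-principle / monotonicity argument that exploits the upwind structure of the fluxes.

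First I would establish \emph{mass conservation} directly from the scheme, since this is purely algebraic and requires no solvability input. Summing \cref{eq:schemef} over all $(i,j,k)\in\mathcal I$ and multiplying by $\Delta\xi$, the right-hand side telescopes: the periodic boundary conditions (encoded in the identifications $x_{N_x+1}\equiv x_1$ etc.) mean every discrete flux difference $F_{i+1/2,j,k}-F_{i-1/2,j,k}$ cancels in pairs around each torus direction. Hence $\sum_{\mathcal I} f^{n+1}_{i,j,k}\Delta\xi=\sum_{\mathcal I} f^{n}_{i,j,k}\Delta\xi$, and since the initial cell averages satisfy $\sum_{\mathcal I} f^0_{i,j,k}\Delta\xi=\int_\Sigma f^0\,\dd\xi=\|f^0\|_{L^1}=1$, the discrete mass equals $\|f^0\|_{L^1}$ for every $n$, giving $\|f_h(t)\|_{L^1}=\|f^0\|_{L^1}$. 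Note that the chemical field $c^n$ appearing in the $W$-flux is determined from $\rho^n$ (hence from $f^n$) via \cref{eq:discc}; the discrete elliptic problem $(\dd_x^2+\dd_y^2-\alpha)c^n=-\rho^n$ has a unique solution because the symmetric matrix $-(\dd_x^2+\dd_y^2)+\alpha I$ is positive definite for $\alpha>0$, so $c^n$—and thus the interaction coefficients $B[c^n]_{i,j,k+1/2}$—is a well-defined (linear) function of the current iterate $f^n$.

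Next I would prove \emph{existence and uniqueness} of $f^{n+1}$ given $f^n$, proceeding by induction on $n$. Because the scheme is backward (implicit) in $f^{n+1}$ but the drift velocities $U,V,W$ are evaluated at the \emph{previous} step through $c^n$, the map $f^n\mapsto f^{n+1}$ is in fact the solution of a \emph{linear} system: rearranging \cref{eq:schemef} gives $(I+\Delta t\,\mathcal A^n)f^{n+1}=f^n$, where $\mathcal A^n$ is the discrete transport–diffusion operator assembled from the diffusion stencils \cref{eq:disclap} and the upwind drift coefficients \cref{eq:scheme_fluxes}, both frozen at time $t^n$. It then suffices to show $I+\Delta t\,\mathcal A^n$ is invertible. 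The cleanest route is to verify that $I+\Delta t\,\mathcal A^n$ is a \emph{nonsingular M-matrix}: the diffusion terms contribute a weakly diagonally dominant, symmetric part, while the upwind discretisation is designed precisely so that each off-diagonal entry coming from the drift is nonpositive and each such outflow contribution is matched by a nonnegative diagonal term. Concretely, the upwind choice $(\cdot)^+f_{i,j,k}+(\cdot)^-f_{i+1,j,k}$ ensures the coefficient multiplying a neighbouring unknown is always $\le 0$ and the coefficient multiplying $f_{i,j,k}$ collects all the positive parts, so the matrix has positive diagonal, nonpositive off-diagonals, and (using mass conservation, i.e. column sums of $\mathcal A^n$ vanish) zero column sums. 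Adding $I$ then makes it strictly diagonally dominant by columns, hence invertible; this gives existence and uniqueness of $f^{n+1}$ simultaneously.

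Finally, \emph{nonnegativity} follows from the M-matrix structure: a nonsingular M-matrix has a nonnegative inverse, so $f^{n+1}=(I+\Delta t\,\mathcal A^n)^{-1}f^n\ge 0$ whenever $f^n\ge 0$, and induction starting from $f^0_{i,j,k}\ge 0$ (immediate from $f^0\ge 0$) closes the argument. I expect the \textbf{main obstacle} to be the verification of the sign/diagonal-dominance structure of $\mathcal A^n$ \emph{uniformly in the coefficients}, since the drift velocities $B[c^n]_{i,j,k+1/2}$ are themselves determined nonlinearly (through the elliptic solve and, for $B_\lambda$, through nearest-neighbour interpolation) by $f^n$; one must check that the upwinding in \cref{eq:scheme_fluxes} still yields the correct signs regardless of the sign of $B[c^n]$, which is exactly what the $(\cdot)^\pm$ splitting guarantees. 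A secondary technical point is confirming that the column-sum cancellation used for diagonal dominance is consistent with the periodic wrap-around indexing, but this is the same telescoping already used for mass conservation, so no genuinely new estimate is needed.
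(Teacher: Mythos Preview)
Your proposal contains a genuine gap: you have misread the time-indexing of the interaction flux. In \cref{eq:schemef} the $\theta$-flux is $F^{\theta,n+1}$, which by the definitions in \cref{eq:scheme_fluxes} involves $W^{n+1}$ and hence $B[c^{n+1}]$, not $B[c^{n}]$. Since $c^{n+1}$ is determined via \cref{eq:discc} from $\rho^{n+1}=\sum_k f^{n+1}_{i,j,k}\Delta\theta$, the drift coefficients in the $\theta$-direction depend on the unknown $f^{n+1}$ itself. The implicit step is therefore \emph{nonlinear}, and your representation $(I+\Delta t\,\mathcal A^{n})f^{n+1}=f^{n}$ with $\mathcal A^{n}$ frozen at the previous level does not describe the scheme; the M-matrix inversion argument cannot be applied directly.

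The paper handles precisely this coupling by a decoupling-plus-fixed-point strategy: one first \emph{prescribes} an arbitrary chemical field $(c^n_{i,j})_n$, observes that for fixed $c$ the scheme for $f$ is linear and admits a unique nonnegative mass-preserving solution (this is where your M-matrix reasoning, or equivalently the cited results of Eymard et al.\ and Bailo et al., would enter), and then defines the map $\mathcal S:c\mapsto f\mapsto\tilde c$ by solving \cref{eq:discc} with the resulting $\rho$. Mass conservation and nonnegativity force $\sum_{i,j}\tilde c^n_{i,j}\Delta\xx=1/\alpha$, so $\mathcal S$ maps a compact convex set of chemical fields to itself, and Brouwer's fixed-point theorem produces the coupled solution. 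Your structural observations---telescoping for mass conservation, positive definiteness of the discrete elliptic operator, and the upwind sign pattern---are all correct and are exactly the ingredients used inside this loop; what is missing from your plan is the fixed-point layer that closes the nonlinear coupling between $f^{n+1}$ and $c^{n+1}$.
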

We conclude this section by stating the main convergence result, which we will prove later.
\begin{thm}[Convergence of the scheme]\label{thm:conv}
    Let $f^0\in L^1_+(\Sigma)\cap L^\infty_+(\Sigma)$ be a nonnegative initial datum with mass one. Then, given a family of solutions $(f_h)_h$ as defined in Definition \ref{def:pwc-soln} and provided $\Delta t<\max\{\frac{D_T}{2\Pe^2},C'C_T\}$ (see \cref{cor:unifL2}), there exists a subsequence that converges strongly in $L^2(\Sigma_T)$, as $h\to 0$, and its limit is a weak solution of \cref{full_problem_intro} as in \cref{def:solf}.
\end{thm}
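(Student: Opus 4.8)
The plan is to follow the classical three-step strategy for finite volume convergence proofs, as in \cite{filbet2006finite,bailo2020convergence}: derive uniform-in-$h$ a priori estimates, extract a strongly convergent subsequence by a discrete compactness argument, and pass to the limit in the discrete weak formulation. \cref{prop:exist} already provides existence, uniqueness, nonnegativity and mass conservation, so the remaining content is entirely the passage to the limit.

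First, I would assemble the uniform a priori estimates of \cref{sec:fintest}. The essential one is a discrete $L^2(0,T;H^1)$ bound: testing the scheme \eqref{eq:schemef} against $f^{n+1}_{i,j,k}\Delta\xi$ and summing by parts produces the squared discrete gradients $\|\dd_x f_h\|_{L^2}^2+\|\dd_y f_h\|_{L^2}^2+\|\dd_\theta f_h\|_{L^2}^2$ with a favourable sign from the diffusion, while the non-gradient self-propulsion and interaction fluxes are absorbed using Young's inequality at the cost of the constraint $\Delta t<D_T/(2\Pe^2)$ appearing in the statement (cf.\ \cref{cor:unifL2}). In parallel, the discrete elliptic equation \eqref{eq:discc} together with discrete elliptic regularity and the discrete Sobolev--Poincar\'e inequalities of \cite{bessemoulin2015discrete} yields uniform bounds on $c_h$ and its discrete first and second derivatives in terms of $\|\rho_h\|$; this is what controls $B[c_h]_{i,j,k+1/2}$ uniformly in $h$ for all three modelling choices \eqref{B_expanded}.

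Second, for the compactness I would establish estimates on the space and time translates of $f_h$ in $L^2(\Sigma_T)$. The space-translate bound follows directly from the uniform discrete-gradient estimate. For the time-translate bound I would use the scheme itself: the difference $f^{n+1}-f^n$ equals a discrete divergence of fluxes, so summing over the relevant time steps and pairing against $f_h$ expresses $\|f_h(\cdot+s)-f_h\|_{L^2}^2$ in terms of the flux norms already controlled above. The Kolmogorov--Riesz compactness criterion (in its discrete form, e.g.\ \cite{filbet2006finite}) then gives relative compactness in $L^2(\Sigma_T)$ and hence a subsequence $f_h\to f$ strongly, with the discrete gradients converging weakly in $L^2$ to the corresponding weak derivatives of $f$, so that $f\in L^2(0,T;H^1(\Sigma))$. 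The strong convergence $\rho_h\to\rho$ follows by integrating in $\theta$, and feeding this into the stability of the discrete elliptic solver identifies the limit $c$ as the strong solution of \eqref{elliptic_c}, with the discrete derivatives of $c_h$ converging to those of $c$; this in turn gives convergence of $B[c_h]$.

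Third, I would pass to the limit. Multiplying \eqref{eq:schemef} by the cell average of a test function $\varphi\in C^\infty(\overline{\Sigma_T})$ with $\varphi(T)=0$, summing over $\mathcal I$ and $n$, and performing discrete summation by parts transfers the differences onto $\varphi$. Using the smoothness of $\varphi$ to replace discrete difference quotients by exact derivatives up to $O(h)$ consistency errors, the diffusion terms pass to the limit by weak convergence of the gradients, and the drift and interaction terms by combining the strong convergence of $f_h$ with the convergence of the bounded upwind velocities, recovering each term of \eqref{eq:weakeq}. The main obstacle is the nonlinear interaction flux $W^n_{i,j,k+1/2}$: one must show that its upwind splitting into $(B[c^n])^\pm$ times neighbouring cell values converges to $B[c]f$. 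This requires the uniform bound and convergence of $B[c_h]$ (itself resting on the discrete elliptic $H^2$-regularity), the fact that adjacent cell values $f_{i,j,k}$ and $f_{i,j,k+1}$ share the same strong $L^2$ limit, and control of the first-order upwind consistency error, which vanishes as $h\to0$.
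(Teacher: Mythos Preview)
Your overall three–step strategy is exactly that of the paper, and the description of the a priori estimates and of the passage to the limit in the weak formulation is accurate. There is, however, a genuine gap in your compactness step.

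You propose to obtain the time-translate estimate directly in $L^2(\Sigma_T)$ by pairing $f_h(\cdot+s)-f_h$ against $f_h$ and invoking Kolmogorov--Riesz. This runs into trouble for the interaction flux: after summation by parts the $\gamma$-term produces $\sum W^{n+m}\,\dd_\theta(f^{n+M}-f^n)$, and since $|W^{n+m}|\le |B[c^{n+m}]|\,|f^{n+m}|$ you would need $B[c_h]f_h\in L^2(\Sigma_T)$ uniformly in $h$. With only the Section~\ref{sec:fintest} estimates ($B[c_h]\in L^\infty_t L^2_\xi$ and $f_h\in L^\infty_t L^2_\xi$) this product is merely in $L^1$, and for $B_\tau$ (which contains second derivatives of $c_h$ that are only in $L^2(\Omega)$) you cannot upgrade $B[c_h]$ to a higher Lebesgue space. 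The paper (following \cite{filbet2006finite}) avoids this by proving instead a \emph{weak} time-translate estimate against smooth test functions, $\int (f_h(t+s)-f_h)\varphi\le Cs\|\varphi\|_{L^2(0,T;H^4)}$: the smoothness of $\varphi$ puts $\dd_\theta\varphi\in L^6$, and after establishing $f_h\in L^3(\Sigma_T)$ via the discrete Gagliardo--Nirenberg inequality the H\"older product $L^2\times L^3\times L^6$ closes. Compactness then comes from Aubin--Lions--Simon with the chain $X_3(\Sigma)\hookrightarrow L^2(\Sigma)\hookrightarrow (H^4(\Sigma))'$, not from Kolmogorov--Riesz in $L^2$ directly.

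A smaller point: for the chemical field the paper obtains only weak-$\ast$ convergence of $c_h,\dd_\xx c_h,\dd_\xx^2 c_h$ in $L^\infty(0,T;L^2(\Omega))$, and hence only $B[c_h]\stackrel{\ast}{\rightharpoonup}B[c]$ in $L^\infty(0,T;L^2(\Sigma))$. This is enough to pass to the limit in $\int B[c_h]f_h\,\partial_\theta\varphi$ by weak--strong pairing, but your phrase ``convergence of $B[c_h]$'' should be read in this weak sense; you do not get strong convergence of the interaction velocity.
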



\section{A Priori Estimates}
\label{sec:fintest}
In this section, we prove the existence result of \cref{prop:exist} and derive a set of discrete a priori estimates necessary for \cref{thm:conv}. To this end, we recall a discrete Gr\"onwall-type inequality mimicking the standard continuum Gr\"onwall inequality. 
Next, we define discrete analogues of Sobolev space norms. We then prove an $L^2$-estimate for the discrete spatial density $\rho_h$, an $H^1$-type estimate for the discrete chemical field $c_h$ and the discrete interaction term $\dd_\theta B[c_h]$. Finally, we obtain a uniform-in-time $L^2$-estimate for $f_h$ and $\dd_\xi f_h$ on bounded time intervals, independent of the mesh size. All these estimates will be used to prove the convergence of the scheme in \cref{sec:conv}.

\begin{lem}[Discrete Gr\"onwall inequality]\label{lem:discgron}
    Suppose $(F^n)_n \subset (0, \infty)$ is a nonnegative real-valued sequence and $C\in[0,1)$. If
    \begin{equation}
        F^{n+1}-F^n\leq C F^{n+1}, \quad \mathrm{for} \quad n=0,1,2,\dots,
    \end{equation}
    then 
    \begin{equation}
        F^n\leq F^0\prod_{k=1}^n\frac{1}{1-C^k}.    
    \end{equation}
\end{lem}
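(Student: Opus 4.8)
The plan is a one-line rearrangement followed by a telescoping induction. Fix $n$ and start from the per-step inequality $F^{n+1}-F^n \le C^{\,n+1}F^{n+1}$, collecting the $F^{n+1}$ terms to get $(1-C^{\,n+1})F^{n+1}\le F^n$. The crucial observation is that $C\in[0,1)$ forces $0< 1-C^{\,k}\le 1$ for every $k\ge 1$, so dividing by the strictly positive quantity $1-C^{\,n+1}$ preserves the inequality and yields the one-step bound
\[
F^{n+1}\le \frac{1}{1-C^{\,n+1}}\,F^n .
\]
If instead one reads the coefficient as a constant $C$, the same manipulation only gives $F^{n+1}\le (1-C)^{-1}F^n$ and hence the weaker $F^n\le (1-C)^{-n}F^0$; the stated product bound, being the smaller quantity, genuinely needs the geometrically decaying factors $C^{\,k}$.

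I would then prove $F^n\le F^0\prod_{k=1}^n (1-C^{\,k})^{-1}$ by induction on $n$. The base case $n=0$ is the convention that the empty product equals $1$, giving $F^0\le F^0$. For the inductive step, assuming $F^n\le F^0\prod_{k=1}^n(1-C^{\,k})^{-1}$, I apply the one-step bound and merge the factors:
\[
F^{n+1}\le \frac{1}{1-C^{\,n+1}}F^n \le F^0\,\frac{1}{1-C^{\,n+1}}\prod_{k=1}^{n}\frac{1}{1-C^{\,k}} = F^0\prod_{k=1}^{n+1}\frac{1}{1-C^{\,k}},
\]
which closes the induction. Equivalently, one simply iterates the one-step bound from $F^0$ and telescopes; since each factor is positive, no inequality reverses along the way.

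There is no substantial obstacle here: the only point requiring care is the sign when dividing, which is guaranteed by $C\in[0,1)$. The reason for phrasing the bound as this particular product, rather than the cruder $(1-C)^{-n}$, is that $\prod_{k=1}^\infty (1-C^{\,k})^{-1}$ converges. Indeed, using $-\log(1-x)\le x/(1-x)$ with $x=C^{\,k}$ and $1-C^{\,k}\ge 1-C$, one gets $-\log(1-C^{\,k})\le C^{\,k}/(1-C)$, and $\sum_{k\ge 1}C^{\,k}=C/(1-C)<\infty$, so the partial products are bounded uniformly in $n$. This furnishes the $n$-independent bound $F^n\le F^0\prod_{k=1}^\infty(1-C^{\,k})^{-1}<\infty$, which is presumably what is needed downstream to turn the per-step discrete estimates into the uniform-in-time bounds advertised for the scheme.
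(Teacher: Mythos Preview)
The paper does not actually supply a proof of this lemma; it is merely ``recalled'' as a standard fact. So there is no paper proof to compare against, but your analysis is worth commenting on.

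You have correctly spotted that the statement, as written, is internally inconsistent. With the literal hypothesis $F^{n+1}-F^n\le C F^{n+1}$ (constant $C$), the best one-step bound is $F^{n+1}\le(1-C)^{-1}F^n$, giving $F^n\le(1-C)^{-n}F^0$. The stated conclusion $F^n\le F^0\prod_{k=1}^n(1-C^k)^{-1}$ is strictly smaller for $n\ge2$ and $0<C<1$, and is in fact false: take $C=1/2$, $F^n=2^n$, so that the hypothesis holds with equality, yet $F^2=4>8/3=\prod_{k=1}^2(1-2^{-k})^{-1}$. Your proof under the amended hypothesis $F^{n+1}-F^n\le C^{n+1}F^{n+1}$ is clean and correct.

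However, looking at the only place the lemma is invoked (Proposition~\ref{prop:L2rho}), the inequality obtained there is
\[
\|\rho_h(t^{n+1})\|_{L^2}^2-\|\rho_h(t^n)\|_{L^2}^2\le \frac{2\Delta t\,\Pe^2}{D_T}\,\|\rho_h(t^{n+1})\|_{L^2}^2,
\]
with a \emph{constant} coefficient. So the intended lemma is almost certainly the constant-$C$ version with conclusion $F^n\le(1-C)^{-n}F^0$; the superscript $k$ in the product is a typo. In that application $C=2\Delta t\,\Pe^2/D_T$ and $n\le T/\Delta t$, so $(1-C)^{-n}$ stays bounded as $h\to0$ (via $1+x\le e^x$, as the paper indicates). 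Your closing remarks about the convergence of $\prod_{k\ge1}(1-C^k)^{-1}$ are correct but address a different scenario from the one the paper actually needs.
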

Next, let us define the discrete Sobolev spaces we will use in our analysis.
\begin{mydef}[Function space norms]
For $1\leq p<+\infty$, we define the $L^p$-norm
\begin{equation}
        \|f_h\|_{L^p}=\left(\int_\Sigma |f_h(t,\xi)|^p\dd\xi\right)^{1/p}=\left(\sum_{i,j,k}|f^n_{i,j,k}|^p\Delta\xi\right)^{1/p}.
\end{equation}
We define the $L^\infty$-norm as 
\begin{equation}
    \|f_h\|_{L^\infty}=\sup_{\xi\in\Sigma}|f_h(t,\xi)|=\sup_{i,j,k}|f^n_{i,j,k}|.
\end{equation}
For $1\leq p<+\infty$ we define the Sobolev seminorm
\begin{align}
    |f_h|_{1,p}
    &=\left(\sum_{i,j,k}\left(|\dd_x f_{i+1/2,j,k}|^p+|\dd_y f_{i,j+1/2,k}|^p+|\dd_\theta f_{i,j,k+1/2}|^p\right)\Delta\xi\right)^{1/p}\\
    &=\|\dd_\xi f_h\|_{L^p},
\end{align}
where
\begin{equation}
    \dd_\xi f_h(t,x,y,\theta)=(\dd_x f_h,\dd_y f_h,\dd_\theta f_h)^\mathsf{T}.
\end{equation}
The discrete Sobolev norm is defined as 
\begin{equation}
    \|f_h\|_{1,p}=|f_h|_{1,p}+\|f_h\|_{L^p}.
\end{equation}
We also use the notation $\|f_h\|_{1,p,\Sigma}$ if it is relevant what domain, here $\Sigma$, is used for the integration or summation.
\end{mydef}
\begin{lem}[Summation by parts]
\label{lem:ibp}
For $(a_{i,j,k})_{(i,j,k)\in \mathcal I}$ and $(b_{i,j,k})_{(i,j,k)\in \mathcal I}$ on the discretised periodic domain we have
    \begin{equation}
        \sum_{i,j,k}(a_{i+1,j,k}-a_{i,j,k})b_{i,j,k}=-\sum_{i,j,k}(b_{i+1,j,k}-b_{i,j,k})a_{i+1,j,k}.
    \end{equation}
Analogous relations hold in the $j$- and $k$-directions.
\end{lem}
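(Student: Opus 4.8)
The plan is to prove this as a discrete analogue of integration by parts (Abel summation), in which the periodicity of the domain is exactly what eliminates the boundary terms that would otherwise appear. Since the indices $j$ and $k$ are mere spectators — the shift occurs only in the $i$-direction, and the outer sums over $j$ and $k$ factor out — I would first fix $j$ and $k$ and establish the one-dimensional identity
\[
\sum_{i=1}^{N_x}(a_{i+1}-a_i)b_i = -\sum_{i=1}^{N_x}(b_{i+1}-b_i)a_{i+1},
\]
and then sum the result over all $j$ and $k$ to recover the full statement. The analogous identities in the $j$- and $k$-directions follow by the same argument after permuting the roles of the indices.

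For the one-dimensional identity, I would expand both sides into their constituent sums. The left-hand side reads $\sum_i a_{i+1}b_i - \sum_i a_i b_i$, while the right-hand side reads $-\sum_i a_{i+1}b_{i+1} + \sum_i a_{i+1}b_i$. Subtracting the common term $\sum_i a_{i+1}b_i$, the desired equality reduces to the single assertion
\[
\sum_{i=1}^{N_x} a_i b_i = \sum_{i=1}^{N_x} a_{i+1}b_{i+1}.
\]
This is precisely the shift-invariance of a sum over a full period of the discrete torus: with $g_i := a_i b_i$ and the periodic convention $a_{N_x+1}=a_1$, $b_{N_x+1}=b_1$ (consistent with the wrap-around $x_{N_x+1}\equiv x_1$ fixed in the discretisation), one has $\sum_{i=1}^{N_x} g_{i+1} = \sum_{i=1}^{N_x} g_i$ because reindexing by a unit shift merely permutes the summands cyclically. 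This closes the argument.

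There is no genuine obstacle here; the only point requiring care is the bookkeeping at the wrap-around index. On a bounded, non-periodic domain the same manipulation would leave the boundary contribution $a_{N_x+1}b_{N_x} - a_1 b_0$, and it is exactly the periodic identification $a_{N_x+1}=a_1$ and $b_0=b_{N_x}$ that forces this term to vanish. I would therefore state the periodic conventions explicitly before carrying out the reindexing, so that the cancellation is transparent, and then note that the whole content of the lemma is this shift-invariance of summation over $\mathbb{T}$.
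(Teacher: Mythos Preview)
Your proof is correct and follows essentially the same approach as the paper: both arguments expand the sums and reduce the identity to the shift-invariance $\sum_i a_ib_i=\sum_i a_{i+1}b_{i+1}$, which holds by periodicity. The only cosmetic difference is that you first fix $j,k$ and treat the one-dimensional case explicitly, whereas the paper works directly with the full triple sum and relabels.
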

\begin{proof}
We write
\begin{align*}
    \sum_{i,j,k}(a_{i+1,j,k}-a_{i,j,k})b_{i,j,k}=&\sum_{i,j,k}a_{i+1,j,k}b_{i,j,k}-\sum_{i,j,k}a_{i,j,k}b_{i,j,k}\\ =& \sum_{i,j,k}a_{i+1,j,k}b_{i,j,k}-\sum_{i,j,k}a_{i+1,j,k}b_{i+1,j,k},
\end{align*}
after relabelling and using periodicity, that is, $a_{N_x+1,j,k}=a_{1,j,k}$.
\end{proof}
We can now prove \cref{prop:exist}.
\begin{proof}[Proof of \cref{prop:exist}]
We first establish the existence and uniqueness of the finite difference \cref{eq:discc}. For a given nonnegative $(\rho^n_{i,j})_{i,j}$, we note that we have a linear system of $N_x  N_y$ unknowns $c_{i,j}^n$ and the same number of coupled equations
     $$
     -\dd^2_x c^n_{i,j} -\dd^2_y c^n_{i,j} + \alpha c^n_{i,j}=\rho^n_{i,j}.
     $$
We next show that the kernel of this linear system is the trivial solution $c^n_{i,j}= 0$. 
Indeed, for $\rho^n_{i,j}=0$, multiplying \cref{eq:discc} by $c^n_{i,j}$ and summing over all pairs $(i,j)$, summation by parts yields 
     \begin{equation}
         \sum_{i,j}\left( |\dd_x c_{i,j}^n|^2
          + |\dd_y c_{i,j}^n|^2 + \alpha |c^n_{i,j}|^2\right)=0.
     \end{equation}
Hence, we find $c^n_{i,j}=0$. This implies that for any $\rho^n_{i,j}$, there is a unique solution to the linear system \cref{eq:discc}. Further, since $\rho^n_{i,j}$ is nonnegative by assumption, $c^n_{i,j}$ is nonnegative by the same argument as in the proof of \cite[Theorem 3.1]{bailo2020convergence}.

Now, we construct the fixed-point operator. First, for a given prescribed chemical field $c^n_{i,j}$, possibly varying in time, and an initial datum $f^0_{i,j,k}$ satisfying the assumptions, there exists a unique solution $f^n_{i,j,k}$ to the scheme \eqref{eq:scheme} (without \cref{eq:discc}), see \cite[Theorem 17.1]{eymard2000finite}. This solution is also nonnegative and mass-preserving by the same arguments as in the proof of \cite[Theorem 3.1]{bailo2020convergence}. Upon setting
     $$
        \mathcal X = \left\{ (c_{i,j}^n)\in\mathbb{R}^{N_T N_x N_y} \ \Bigg| \ \sup_{n=0,1,\dots,N_T}\sum_{i,j}|c_{i,j}^n|\Delta\xx\leq 1/\alpha\right\},
     $$
    which is convex and compact, we define the operator 
     $$
     \mathcal{S}: \mathcal X \to \mathcal X, c^n_{i,j}\mapsto f^n_{i,j,k}\mapsto \tilde c^n_{i,j},
     $$
     where $\tilde c^n_{i,j}$ solves \cref{eq:discc} with $\rho^n_{i,j}=\sum_k f^n_{i,j,k}\Delta\theta$ as its source term. 
     This operator is continuous (cf. proof of \cite[Theorem 2.1]{filbet2006finite}). Also, we have that $\sum_{i,j} \tilde c^n_{i,j}\Delta\xx=1/\alpha$ for all $n=0,1,\dots,N_T$, using the mass-preserving property and \cref{eq:discc}. Hence, by the nonnegativity of $\tilde c^n_{i,j}$, the operator $\mathcal{S}$ is a fixed-point operator. Applying Brouwer's fixed point theorem, this concludes the proof.
\end{proof}

\begin{rem}[Scheme for $\rho_{i,j}^n$] 
    Multiplying \cref{eq:schemef} by $\Delta\theta$ and summing over $k$, results in the following scheme for the spatial density, 
    \begin{subequations}
    	\label{eq:schemerho}
    \begin{align}
        \label{eq:scheme-rho}
        \frac{\rho^{n+1}_{i,j}-\rho^n_{i,j}}{\Delta t}=-\frac{1}{\Delta x}\left(\bar{F}^{x,n+1}_{i+1/2,j}-\bar{F}^{x,n+1}_{i-1/2,j}\right)-\frac{1}{\Delta y}\left(\bar{F}^{y,n+1}_{i,j+1/2}-\bar{F}^{y,n+1}_{i,j-1/2}\right),
    \end{align}
    with
    \begin{equation}
        \label{eq:fluxes-rho}
        \begin{split}
            &\bar{F}^{x,n+1}_{i+1/2,j}= - \left(D_T \dd_x\rho^n_{i+1/2,j}-\Pe\bar{U}^{n+1}_{i+1/2,j}\right), \quad \text{where} \quad \bar{U}^{n+1}_{i+1/2,j}=\sum_{k} U^{n+1}_{i+1/2,j,k}\Delta\theta,\\
            &\bar{F}^{y,n+1}_{i,j+1/2}= - \left(D_T \dd_y\rho^n_{i,j+1/2}-\Pe\bar{V}^{n+1}_{i,j+1/2}\right), \quad \text{where} \quad \bar{V}^{n+1}_{i,j+1/2}=\sum_{k}V^{n+1}_{i,j+1/2,k}\Delta\theta,
        \end{split}
    \end{equation}
        \end{subequations}
    where $U^{n}_{i+1/2,j,k}, V^{n}_{i+1/2,j,k}$ are given in \cref{eq:scheme_fluxes}.
As expected, it corresponds to the finite-volume discretisation of \cref{eq:rho}. We define the piecewise constant interpolation $\rho_h(t,\xx)$ from $\rho_{i,j}^n$ analogously to \cref{def:pwc-soln}.
\end{rem}
\begin{prop}[$L^2$-estimate for $\rho_h$]\label{prop:L2rho}
    Provided that $\Delta t<\frac{D_T}{2\Pe^2}$ and $\rho^0\in L^1_+(\Omega)\cap L^\infty_+(\Omega)$, then the spatial density of the solution for the scheme of \cref{eq:scheme} satisfies
    \begin{equation}
        \sup_{t\in[0,T]}\|\rho_h(t)\|_{L^2}\leq C_T,
    \end{equation}
    where $C_T>0$ is independent of $h>0$ but may depend on $T>0$ and $\|\rho^0\|_{L^2}$.
\end{prop}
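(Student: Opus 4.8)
The plan is to run a discrete $L^2$ energy estimate directly on the closed density scheme \cref{eq:schemerho}, exploiting the fact that the $\theta$-flux (and hence the interaction term $B[c]$) integrates out over the periodic orientation variable, so the $\rho$-equation decouples from the chemical field. Thus no a priori control of $c_h$ is needed and the estimate is self-contained — which is exactly why it can be proved first. Concretely, I would multiply \cref{eq:scheme-rho} by $\rho^{n+1}_{i,j}\Delta\xx$ and sum over $(i,j)$. Writing $E^n := \|\rho_h(t^n)\|_{L^2}^2 = \sum_{i,j}|\rho^n_{i,j}|^2\Delta\xx$ and using the elementary inequality $a(a-b)\ge\tfrac12(a^2-b^2)$ on the implicit time difference, the left-hand side is bounded below by $\tfrac{1}{2\Delta t}(E^{n+1}-E^n)$.

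For the right-hand side I would apply the summation-by-parts identity of \cref{lem:ibp} in the $x$- and $y$-directions to move the discrete divergence onto $\rho^{n+1}$, turning the flux terms into $\sum_{i,j}\bigl(\bar F^{x,n+1}_{i+1/2,j}\,\dd_x\rho^{n+1}_{i+1/2,j}+\bar F^{y,n+1}_{i,j+1/2}\,\dd_y\rho^{n+1}_{i,j+1/2}\bigr)\Delta\xx$. Substituting the (backward-in-time) fluxes \cref{eq:fluxes-rho}, the diffusive part produces the coercive term $-D_T|\rho^{n+1}|_{1,2}^2$, while the drift part yields $\Pe\sum_{i,j}(\bar U^{n+1}_{i+1/2,j}\,\dd_x\rho^{n+1}_{i+1/2,j}+\bar V^{n+1}_{i,j+1/2}\,\dd_y\rho^{n+1}_{i,j+1/2})\Delta\xx$. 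I would then estimate this drift by Young's inequality with weight $D_T$, which exactly absorbs the coercive seminorm and leaves $\tfrac{\Pe^2}{4D_T}\sum_{i,j}(|\bar U^{n+1}|^2+|\bar V^{n+1}|^2)\Delta\xx$.

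The crux is to control the upwind polarization fluxes by the density itself. Since $f$ is nonnegative (\cref{prop:exist}) and $|\cos\theta_k|,|\sin\theta_k|\le1$, the upwind definitions \cref{eq:scheme_fluxes} give $|\bar U^{n+1}_{i+1/2,j}|\le \rho^{+,n+1}_{i,j}+\rho^{-,n+1}_{i+1,j}$, where $\rho^{+}$ and $\rho^{-}$ denote the partial densities summed over the orientations with $\cos\theta_k>0$ and $\cos\theta_k<0$ respectively. Using $(\rho^+)^2+(\rho^-)^2\le\rho^2$ together with periodic index shifts yields $\sum_{i,j}|\bar U^{n+1}|^2\Delta\xx\le 2E^{n+1}$, and likewise $\sum_{i,j}|\bar V^{n+1}|^2\Delta\xx\le 2E^{n+1}$. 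Combining, the drift is bounded by $\tfrac{\Pe^2}{D_T}E^{n+1}$, so that $E^{n+1}-E^n\le \tfrac{2\Pe^2}{D_T}\Delta t\,E^{n+1}$.

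Finally, the stated smallness condition $\Delta t<\tfrac{D_T}{2\Pe^2}$ is exactly what makes the constant $C:=\tfrac{2\Pe^2}{D_T}\Delta t$ lie in $[0,1)$, so the discrete Grönwall inequality of \cref{lem:discgron} applies. Using $n\Delta t\le T$ and $E^0\le\|\rho^0\|_{L^2}^2$ (by Jensen applied to the cell averages), this bounds $\sup_n E^n$ by a constant $C_T$ depending only on $T$ and $\|\rho^0\|_{L^2}$ but independent of $h$, giving the claim. I expect the main obstacle to be the sharp $L^2$-control of the upwind fluxes $\bar U,\bar V$: it is the partial-density splitting $(\rho^+)^2+(\rho^-)^2\le\rho^2$, rather than the cruder bound $|\bar U^{n+1}_{i+1/2,j}|\le\rho^{n+1}_{i,j}+\rho^{n+1}_{i+1,j}$, that keeps the constant small enough to reproduce the precise threshold $\tfrac{D_T}{2\Pe^2}$ instead of a worse one.
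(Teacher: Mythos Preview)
Your proposal is correct and follows essentially the same route as the paper: multiply \cref{eq:scheme-rho} by $\rho^{n+1}_{i,j}\Delta\xx$, sum, use summation by parts and the inequality $b(b-a)\ge\tfrac12(b^2-a^2)$, absorb the gradient via Young with weight $D_T$, and close with the discrete Gr\"onwall lemma under $\Delta t<\tfrac{D_T}{2\Pe^2}$. The one cosmetic difference is that the paper bounds the upwind flux more directly by $|\bar U^{n+1}_{i+1/2,j}|\le\max\{\rho^{n+1}_{i,j},\rho^{n+1}_{i+1,j}\}$ (since the two contributions to $\bar U$ have opposite signs), which immediately gives $\sum|\bar U|^2\Delta\xx\le 2E^{n+1}$ without your partial-density splitting; this yields the identical constant and the same threshold.
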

\begin{proof}
We multiply \cref{eq:scheme-rho} by $\rho_{i,j}^{n+1}$ and sum over all $i,j$ to get
        \begin{equation} 
        \begin{split}
            \sum_{i,j} \frac{\rho^{n+1}_{i,j}-\rho^n_{i,j}}{\Delta t}\rho^{n+1}_{i,j}
            &=
            D_T\sum_{i,j} \left (\dd_x^2\rho^{n+1}_{i,j} + \dd_y^2\rho^{n+1}_{i,j} \right) \rho^{n+1}_{i,j}\\
        &\phantom{=}-\Pe\sum_{i,j}\left(\frac{\bar{U}^{n+1}_{i+1/2,j}-\bar{U}^{n+1}_{i-1/2,j}}{\Delta x} \rho^{n+1}_{i,j} + \frac{\bar{V}^{n+1}_{i,j+1/2}-\bar{V}^{n+1}_{i,j-1/2}}{\Delta y}\rho^{n+1}_{i,j}\right).
        \end{split}
        \end{equation}
Then, multiplying by $\Delta\xx$ and summing by parts as in \cref{lem:ibp}, 
    we get
    \begin{equation} \label{schrho}
    \begin{aligned}
       \sum_{i,j}\frac{\rho^{n+1}_{i,j}-\rho^n_{i,j}}{\Delta t} \rho^{n+1}_{i,j}\Delta\xx 
       &= - D_T\sum_{i,j}
        \left(|\dd_x\rho_{i+1/2,j}^{n+1}|^2 + |\dd_y\rho_{i,j+1/2}^{n+1}|^2\right)\Delta\xx  \\
        & \phantom{{} =} +  \Pe \sum_{i,j} \left(\bar{U}^{n+1}_{i+1/2,j}\dd_x\rho_{i+1/2,j}^{n+1}+\bar{V}^{n+1}_{i,j+1/2}\dd_y\rho_{i,j+1/2}^{n+1}\right)\Delta\xx\\
        &\leq -(D_T-\varepsilon)\|\dd_\xx\rho_h(t^{n+1})\|_{L^2(\Omega)}^2+\frac{\Pe^2}{\varepsilon}\|\rho_h(t^{n+1})\|_{L^2(\Omega)}^2,
    \end{aligned}
    \end{equation}
using $|\bar{U}^{n+1}_{i+1/2,j}|\leq\max\{\rho^{n+1}_{i,j},\rho^{n+1}_{i+1,j}\}$, the weighted Young's  inequality, and the notation
    \begin{equation}
        \dd_\xx\rho_h=(\dd_x\rho_h,\dd_y\rho_h)^\mathsf{T}.
    \end{equation}
     Finally, using the inequality $b(b-a)\geq \tfrac{1}{2}(b^2-a^2)$ on the left-hand side of \cref{schrho}, we obtain
    \begin{equation}
        \frac{1}{2\Delta t}\left(\|\rho_h(t^{n+1})\|_{L^2(\Omega)}^2-\|\rho_h(t^n)\|_{L^2(\Omega)}^2\right)\leq -(D_T-\varepsilon)\|\dd_\xx\rho_h(t^{n+1})\|_{L^2(\Omega)}^2+\frac{\Pe^2}{\varepsilon}\|\rho_h(t^{n+1})\|_{L^2(\Omega)}^2.
    \end{equation}
    Thus, 
     \begin{equation}
        \|\rho_h(t^{n+1})\|_{L^2(\Omega)}^2-\|\rho_h(t^n)\|_{L^2(\Omega)}^2 \leq\frac{2\Delta t\Pe^2}{D_T}\|\rho_h(t^{n+1})\|_{L^2(\Omega)}^2.
    \end{equation}
    Hence, by \cref{lem:discgron}, the inequality $1+x\leq \mathrm{e}^x$ for $x\in\mathbb{R}_{\geq 0}$, and provided that $\Delta t<\frac{D_T}{2\Pe^2}$, the result follows.
\end{proof}

\begin{rem}[Hessian of $c_h$]  For the chemical field $c$, we define the piecewise constant Hessian
\begin{equation}
    \dd_\xx^2 c_h=\begin{pmatrix}
        \dd_{x}^2 c_h & \dd_{xy} c_h\\
        \dd_{xy} c_h & \dd^2_y c_h
    \end{pmatrix},
\end{equation}
such that 
\begin{equation}
    \dd_x^2 c_h=\dd_x^2 c_{i,j} \quad \mathrm{for} \ (x,y)\in C_{i,j},  \qquad \dd_{xy}c_h=\dd_y \dd_{x} c_{i+1/2,j+1/2} \quad \mathrm{for} \ (x,y)\in C_{i+1/2,j+1/2}.
\end{equation}
\end{rem}

\begin{prop}\label{cor:disccreg}
There exist constants $C,C_B>0$ such that,  for any choice of $B\in\{B_0,B_\lambda,B_\tau\}$, the following inequalities hold for $\rho_h$ and $c_h$ numerical solutions of \cref{eq:discc}:
    \begin{align}
    \|c_h\|_{1,2}\leq& C\|\rho_h\|_{L^2(\Omega)}, \qquad \|\dd_\xx^2 c_h\|_{L^2}\leq C\|\rho_h\|_{L^2(\Omega)}
    \end{align}
    and
    \begin{align}
        \|\dd_\theta B[c_h]\|_{L^2} \leq & C_B\|\rho_h\|_{L^2(\Omega)}.
    \end{align}
The constants $C,C_B>0$ do not depend on $h$, as long as the ratios $C_{\Delta x}=\Delta\theta/\Delta x,C_{\Delta y}=\Delta\theta/\Delta y$ are  uniformly bounded from below strictly from zero, that is, $\inf_h \min\{C_{\Delta x}(h),C_{\Delta y}(h)\}\geq \varepsilon_C$ for some $\varepsilon_C>0$. 
\end{prop}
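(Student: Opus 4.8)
The plan is to treat the three bounds in order of increasing difficulty, using the discrete elliptic equation \cref{eq:discc} as the common starting point. For the first inequality I would run the standard discrete energy estimate: multiply \cref{eq:discc} by $c_{i,j}^n$, sum over $(i,j)$, multiply by $\Delta\xx$, and apply summation by parts (\cref{lem:ibp}) exactly as in the proof of \cref{prop:exist}. This gives the identity $\|\dd_\xx c_h\|_{L^2(\Omega)}^2+\alpha\|c_h\|_{L^2(\Omega)}^2=\sum_{i,j}\rho_{i,j}^n c_{i,j}^n\Delta\xx$, and a weighted Young inequality on the right absorbs the $\|c_h\|_{L^2}$ factor, yielding $\|c_h\|_{1,2}\leq C\|\rho_h\|_{L^2(\Omega)}$. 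No mesh-ratio hypothesis is needed here.

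For the Hessian bound, the key is a discrete analogue of the periodic identity $\|\Delta c\|_{L^2}^2=\sum_{i,j}\|\partial_i\partial_j c\|_{L^2}^2$. I would establish, by two summations by parts, that $\sum_{i,j}\dd_x^2 c_{i,j}\,\dd_y^2 c_{i,j}=\sum_{i,j}(\dd_{xy} c_{i+1/2,j+1/2})^2$ with the staggered mixed difference $\dd_{xy}c=\dd_y\dd_x c_{i+1/2,j+1/2}$ appearing in the Hessian remark; expanding the Frobenius square then gives exactly $\|\dd_\xx^2 c_h\|_{L^2}^2=\|\dd_x^2 c_h+\dd_y^2 c_h\|_{L^2}^2$. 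Since \cref{eq:discc} reads $\dd_x^2 c+\dd_y^2 c=\alpha c_h-\rho_h$, the right-hand side is controlled by $\alpha\|c_h\|_{L^2}+\|\rho_h\|_{L^2}\leq C\|\rho_h\|_{L^2}$ via the first bound, closing the second inequality, again without any mesh-ratio hypothesis.

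For $\dd_\theta B[c_h]$, interpreting $\dd_\theta B[c]_{i,j,k}=\tfrac{1}{\Delta\theta}(B[c]_{i,j,k+1/2}-B[c]_{i,j,k-1/2})$, the $B_0$ case is immediate: only $\mathbf{n}(\theta_{k+1/2})$ depends on $k$, and $|\mathbf{n}(\theta_{k+1/2})-\mathbf{n}(\theta_{k-1/2})|\leq\Delta\theta$, so $|\dd_\theta B_0[c]_{i,j,k}|\leq|\discgrad c_{i,j}|$ and, summing the $k$-independent summand, $\|\dd_\theta B_0[c_h]\|_{L^2}\leq\sqrt{2\pi}\,\|\discgrad c_h\|_{L^2(\Omega)}\leq C_B\|\rho_h\|_{L^2}$, using $(\discgrad_x c)^2\leq\tfrac12((\dd_x c_{i+1/2,j})^2+(\dd_x c_{i-1/2,j})^2)$. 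For $B_\tau$ I would add and subtract a cross term in the quadratic form $\mathbf{n}\cdot\dischess c\,\mathbf{e}$ to apply a discrete product rule; since both $\mathbf{n}$ and $\mathbf{e}$ vary by $O(\Delta\theta)$ between neighbouring $\theta$-faces, this extra term is bounded by $2\tau|\dischess c_{i,j}|$, and the centred mixed difference $\discgrad_x\discgrad_y c$ is the four-point average of the staggered $\dd_{xy}c$, so $\|\dischess c_h\|_{L^2}\leq C\|\dd_\xx^2 c_h\|_{L^2}\leq C\|\rho_h\|_{L^2}$ by the second bound. This settles $B_0$ and $B_\tau$ with $h$-independent constants.

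The genuinely delicate case is $B_\lambda$, and I expect this to be the main obstacle, as it is the only place the ratio hypothesis $C_{\Delta x},C_{\Delta y}\geq\varepsilon_C$ enters. Writing $B_\lambda[c]_{i,j,k+1/2}=\mathbf{n}(\theta_{k+1/2})\cdot\discgrad c_{i,j,k+1/2}$ with $\discgrad c_{i,j,k+1/2}$ the nearest-neighbour value of the gradient at $\xx_{i,j}+\lambda\mathbf{e}(\theta_{k+1/2})$, I again split $\dd_\theta B_\lambda$ into an $\mathbf{n}$-difference term, controlled by $\|\discgrad c_h\|$ as for $B_0$ (for each fixed $k$ the nearest-neighbour map is a bijective grid shift, hence $L^2$-norm preserving), and a gradient-difference term $\tfrac{1}{\Delta\theta}\mathbf{n}(\theta_{k-1/2})\cdot(\discgrad c_{i,j,k+1/2}-\discgrad c_{i,j,k-1/2})$. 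The two sensing cells are separated by physical distance $O(\lambda\Delta\theta)$, hence by $O(\lambda\Delta\theta/\Delta x)$ grid steps in $x$ and $O(\lambda\Delta\theta/\Delta y)$ in $y$; telescoping the gradient difference along a grid path, using that each single-cell increment of $\discgrad c$ equals a step size times a discrete Hessian entry, and applying Cauchy--Schwarz over the path length controls this term. The subtle point is the staircase behaviour of nearest-neighbour rounding when $\Delta\theta\ll\Delta x$: over a full $\theta$-loop the sensing point crosses $O(\lambda/\Delta x)$ cell boundaries in $x$ (and $O(\lambda/\Delta y)$ in $y$), and at each crossing $\dd_\theta B_\lambda$ spikes by $O(\Delta x/\Delta\theta)$ times a Hessian entry; accumulating these in $L^2$ yields $\|\dd_\theta B_\lambda[c_h]\|_{L^2}^2\lesssim\lambda(C_{\Delta x}^{-1}+C_{\Delta y}^{-1})\|\dd_\xx^2 c_h\|_{L^2}^2$. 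It is precisely the lower bound $C_{\Delta x},C_{\Delta y}\geq\varepsilon_C$ that renders this uniform in $h$, giving $C_B\sim\lambda/\varepsilon_C$ and closing the proof.
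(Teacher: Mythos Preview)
Your proposal is correct and follows essentially the same route as the paper: the energy estimate for $\|c_h\|_{1,2}$, the discrete integration-by-parts identity $\sum\dd_x^2c\,\dd_y^2c=\sum|\dd_x\dd_yc_{i+1/2,j+1/2}|^2$ for the Hessian, the mean-value bounds for $B_0$ and $B_\tau$, and for $B_\lambda$ the same split into an $\mathbf{n}$-difference term plus a gradient-difference term telescoped along a grid path, with the bijective grid-shift observation and the mesh-ratio hypothesis used exactly as you describe. The paper records the final $B_\lambda$ constant as $\max\{2\pi\lambda,\tfrac{1}{2\varepsilon_C}\}$ rather than your $\lambda/\varepsilon_C$ (the two differ only in how the cases ``at most one cell crossed per $\theta$-step'' versus ``many cells per step'' are merged), but the mechanism and the role of $\varepsilon_C$ are identical.
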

\begin{proof}
    First, we derive pointwise estimates on $\dd_\theta B[c_h]$ in terms of the discrete gradient and discrete Hessian of $c_h$. Then, we derive estimates for the $L^2$-norms of the discrete gradient and Hessian of $c_h$ in terms of the $L^2$-norm of $\rho_h$. 
    
    We note that for all the modelling choices for the interaction terms for $B[c]$ we have \begin{equation}
        \dd_\theta B[c]_{i,j,k}=\frac{1}{\Delta \theta}\left(B[c]_{i,j,k+1/2}-B[c]_{i,j,k-1/2}\right).
    \end{equation}
    \begin{itemize}
    	\item $B_0$ interaction: using the mean-value theorem, we have
    \begin{equation}
        |\dd_\theta B_0[c]_{i,j,k}|=\left|\frac{1}{\Delta\theta}[\mathbf{n}(\theta_{k+1/2})-\mathbf{n}(\theta_{k-1/2})]\cdot\discgrad c_{i,j}\right| \leq |\discgrad c_{i,j}|.
    \end{equation}
    \item $B_\tau$ interaction:  we have
    \begin{equation}
    \begin{split}
        \dd_\theta B_\tau[c]_{i,j,k}=&\frac{1}{\Delta\theta}[\mathbf{n}(\theta_{k+1/2})-\mathbf{n}(\theta_{k-1/2})]\cdot\discgrad c_{i,j}\\
        &+\frac{\tau}{\Delta\theta}[\mathbf{n}(\theta_{k+1/2})\dischess c_{i,j}\mathbf{e}(\theta_{k+1/2})-\mathbf{n}(\theta_{k-1/2})\dischess c_{i,j}\mathbf{e}(\theta_{k-1/2})].
    \end{split}
    \end{equation}
    Using the mean value theorem, we  obtain
    \begin{align}
        |\dd_\theta B_\tau[c]_{i,j,k}|  \leq C (|\discgrad c_{i,j}| + |\dischess c_{i,j}|),
    \end{align}
    for some constant $C>0$ independent of $h$. 
    \item $B_\lambda$ interaction: a pointwise estimate is given in \cref{sec:appendix}, and we prove that there is a constant $C>0$ independent of the mesh size such that 
    \begin{equation}
        \|\dd_\theta B[c_h]\|_{L^2}^2\leq C(\|\dd_\xx c_h\|_{L^2(\Omega)}^2+\|\dd^2_{\xx}c_h\|_{L^2(\Omega)}^2),
    \end{equation} for the details of the proof see \cref{sec:appendix}.
    \end{itemize}
    Now we derive the $L^2$-estimates on $c_h$ in terms of the $L^2$-norm of $\rho_h$. Multiplying \cref{eq:discc} by $c_{i,j}\Delta\xx$ and summing over all doubles $(i,j)$ and using integration by parts gives 
    \begin{equation}\label{eq:L2c}
        \|\dd_\xx c_h\|_{L^2(\Omega)}^2+\alpha\|c_h\|_{L^2(\Omega)}^2=\|\rho_h c_h\|_{L^1(\Omega)}\leq \frac{1}{2\alpha}\|\rho_h\|_{L^2(\Omega)}^2+\frac{\alpha}{2}\|c_h\|_{L^2(\Omega)}^2.
    \end{equation}
    This gives the inequality to be obtained, $\|\dd_\theta B[c_h]\|_{L^2}\leq C_B\|\rho_h\|_{L^2(\Omega)}$, for the $B_0$ interaction.
    For the Hessian appearing in the $B_\tau$ and $B_\lambda$ interaction terms, we want to derive a discrete equivalent of the $L^2$-elliptic regularity estimate, i.e., $\|\nabla^2 c\|_{L^2(\Omega)}=\|\Delta c\|_{L^2(\Omega)}$. 
    By integration by parts, we find
    \begin{equation} 
        \label{control_cross}
        \sum_{i,j}  \dd_x^2 c_{i,j}\dd_y^2 c_{i,j} \Delta\xx = \sum_{i,j} |\dd_{x}\dd_y c_{i+1/2,j+1/2}|^2\Delta\xx.
    \end{equation}
    Hence, the $L^2$-norm of the mixed derivatives is controlled by the diagonal terms of $\dd_\xx^2 c$, just as with the continuum PDE level. As a result, 
 squaring \cref{eq:discc} and using $ab\leq\tfrac{1}{2}(a^2+b^2)$ gives
    \begin{equation}
        (\dd_x^2 c_{i,j}+\dd_y^2c_{i,j})^2=\alpha^2 (c_{i,j})^2+2\alpha c_{i,j}\rho^n_{i,j}+(\rho^n_{i,j})^2\leq 2\alpha^2(c_{i,j})^2+2(\rho^n_{i,j})^2.
    \end{equation}
    Summing over doubles $(i,j)$ and multiplying by $\Delta\xx$ and using \cref{eq:L2c} to control $\|c_h\|_{L^2(\Omega)}$ by $\|\rho_h\|_{L^2(\Omega)}$, gives the result for $B_\tau$ and $B_\lambda$ as well.
\end{proof}
\begin{prop}[$L^2$-estimate for $f_h$]\label{cor:unifL2}
Provided that $\Delta t<\frac{D_T}{2\Pe^2}$, $\Delta t< C' C_T$, where $C'>0$ is given in the proof below and $C_T$ is as in \cref{prop:L2rho}, and $\|f^0\|_{L^2}<+\infty$, there exists a constant $C>0$ independent of the mesh size but depending on the final time $T>0$ such that the solutions to the numerical scheme \eqref{eq:scheme} satisfy
\begin{equation}
    \sup_{t\in[0,T]}\|f_h(t)\|_{L^2}+\int_{0}^T\|\dd_\xi f_h(t)\|_{L^2}^2\dd t\leq C.
\end{equation}
\end{prop}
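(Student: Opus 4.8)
The plan is to reproduce the energy estimate from the proof of \cref{prop:L2rho}, now at the level of the full density $f_h$ on the three-dimensional mesh, and to close the resulting inequality using the a priori control of the chemical interaction from \cref{cor:disccreg}. Concretely, I would multiply the scheme \eqref{eq:schemef} by $f^{n+1}_{i,j,k}$, multiply by $\Delta\xi$, and sum over $(i,j,k)\in\mathcal I$. On the left-hand side the elementary inequality $b(b-a)\ge\tfrac12(b^2-a^2)$ produces $\frac{1}{2\Delta t}\big(\|f_h(t^{n+1})\|_{L^2}^2-\|f_h(t^n)\|_{L^2}^2\big)$. Summation by parts (\cref{lem:ibp}) turns the three diffusive fluxes into the nonpositive terms $-D_T\|\dd_\xx f_h(t^{n+1})\|_{L^2}^2-\|\dd_\theta f_h(t^{n+1})\|_{L^2}^2$, while the two translational drift fluxes are handled exactly as for $\rho_h$: using $|U^{n+1}_{i+1/2,j,k}|\le\max\{f^{n+1}_{i,j,k},f^{n+1}_{i+1,j,k}\}$ (since $|\cos\theta_k|\le 1$) and the analogue for $V$, together with weighted Young's inequality, they are bounded by $\varepsilon\|\dd_\xx f_h\|_{L^2}^2+\tfrac{C\Pe^2}{\varepsilon}\|f_h\|_{L^2}^2$.

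The genuinely new term is the orientational drift $\gamma\sum_{i,j,k}W^{n+1}_{i,j,k+1/2}\,\dd_\theta f^{n+1}_{i,j,k+1/2}\,\Delta\xi$. Here I would perform a second, discrete summation by parts in $k$ (using periodicity in $\theta$) to move the derivative off $f$ and onto the interaction coefficient. Abbreviating $a_{k+1/2}=B[c^{n+1}]_{i,j,k+1/2}$, so that $W^{n+1}_{i,j,k+1/2}=a_{k+1/2}^+\,f^{n+1}_{i,j,k}+a_{k+1/2}^-\,f^{n+1}_{i,j,k+1}$, a direct computation using $a^++a^-=a$ and $a^+-a^-=|a|$ yields, for each fixed $(i,j)$,
\begin{equation*}
\sum_k W^{n+1}_{i,j,k+1/2}\big(f^{n+1}_{i,j,k+1}-f^{n+1}_{i,j,k}\big)= -\frac{\Delta\theta}{2}\sum_k \dd_\theta B[c^{n+1}]_{i,j,k}\,(f^{n+1}_{i,j,k})^2 -\frac12\sum_k |a_{k+1/2}|\,\big(f^{n+1}_{i,j,k+1}-f^{n+1}_{i,j,k}\big)^2 .
\end{equation*}
The second sum is nonpositive---it is the extra numerical dissipation generated by the upwinding---and may be discarded. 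Hence the orientational drift is bounded above by $\tfrac{\gamma}{2}\sum_{i,j,k}|\dd_\theta B[c^{n+1}]_{i,j,k}|\,(f^{n+1}_{i,j,k})^2\,\Delta\xi$, and a discrete Cauchy--Schwarz inequality bounds this by $\tfrac{\gamma}{2}\,\|\dd_\theta B[c_h]\|_{L^2}\,\|f_h\|_{L^4}^2$. This is precisely the quantity controlled in \cref{cor:disccreg}, which together with \cref{prop:L2rho} (valid since $\Delta t<\tfrac{D_T}{2\Pe^2}$) gives $\|\dd_\theta B[c_h]\|_{L^2}\le C_B\|\rho_h\|_{L^2(\Omega)}\le C_B C_T$.

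It remains to absorb $\|f_h\|_{L^4}^2$. Since $\Sigma$ is three-dimensional, a discrete Gagliardo--Nirenberg inequality (the discrete Sobolev inequalities of \cite{bessemoulin2015discrete}) gives $\|f_h\|_{L^4}^2\le C\|\dd_\xi f_h\|_{L^2}^{3/2}\|f_h\|_{L^2}^{1/2}+C\|f_h\|_{L^2}^2$, and weighted Young's inequality with exponents $(4/3,4)$ turns $C_T\,\|\dd_\xi f_h\|_{L^2}^{3/2}\|f_h\|_{L^2}^{1/2}$ into $\varepsilon'\|\dd_\xi f_h\|_{L^2}^2+C(\varepsilon',C_T)\|f_h\|_{L^2}^2$. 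Choosing $\varepsilon,\varepsilon'$ small enough that the gradient coefficients remain positive, I obtain the per-step inequality
\begin{equation*}
\|f_h(t^{n+1})\|_{L^2}^2-\|f_h(t^n)\|_{L^2}^2 + 2\Delta t\,c_1\,\|\dd_\xi f_h(t^{n+1})\|_{L^2}^2 \le 2\Delta t\,C\,\|f_h(t^{n+1})\|_{L^2}^2,
\end{equation*}
with $c_1>0$ and $C=C(C_T)$. Dropping the gradient term and applying \cref{lem:discgron}---which requires $2\Delta t\,C<1$, the origin of the smallness condition $\Delta t<C'C_T$---yields the uniform bound $\sup_n\|f_h(t^n)\|_{L^2}^2\le C$; summing the displayed inequality over $n$ and using $\sum_n\Delta t\le T$ then controls $\int_0^T\|\dd_\xi f_h\|_{L^2}^2\,\dd t$, completing the proof.

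The main obstacle, and the step requiring the most care, is the orientational drift: establishing the discrete summation-by-parts identity above (including verifying that the upwind correction is sign-definite), and then closing the estimate through the interplay of the elliptic regularity bound $\|\dd_\theta B[c_h]\|_{L^2}\le C_B\|\rho_h\|_{L^2}$, the discrete Gagliardo--Nirenberg inequality, and Young's inequality. Because the scheme is implicit, all of this must be carried out with $f^{n+1}$ on the right-hand side, so the absorption of the gradient into the diffusive terms and the smallness of $\Delta t$ are essential in order to run the discrete Gr\"onwall argument.
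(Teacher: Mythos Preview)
Your proof is correct and follows the same energy-estimate architecture as the paper. Two tactical differences worth noting: the paper observes that the $\Pe$-drift contribution actually \emph{vanishes} after summation by parts (via the upwind inequality $U^{n+1}_{i+1/2,j,k}\,\dd_x f^{n+1}_{i+1/2,j,k}\le\tfrac12\cos\theta_k\,\dd_x (f^{n+1}_{i+1/2,j,k})^2$ and periodicity in $i$) rather than being bounded by Young's inequality, and it interpolates $\|f_h\|_{L^4}$ through $\|f_h\|_{L^2}^{3/2}\|f_h\|_{L^6}^{1/2}$ and the discrete Poincar\'e--Sobolev inequality instead of a direct Gagliardo--Nirenberg step; conversely, your exact summation-by-parts identity for the $\theta$-drift, which isolates the sign-definite upwind dissipation term $-\tfrac12\sum_k|a_{k+1/2}|(f_{k+1}-f_k)^2$, is cleaner than the paper's inequality-based treatment of the same term.
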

\begin{proof}
    The idea for the proof is a discrete analogue of \cite[Lemma 3.3]{bruna2024lane}. We multiply \cref{eq:scheme} by $f_{i,j,k}^{n+1}\Delta\xi$ and sum over all triples $(i,j,k)$. This gives 
\begin{multline}
            \sum_{i,j,k}\frac{\Delta\xi}{\Delta t}\left(f^{n+1}_{i,j,k}-f^n_{i,j,k} \right) f^{n+1}_{i,j,k} =\sum_{i,j,k}\Delta\xi \left[D_T \left(\dd_x^2+\dd_y^2 \right) f^{n+1}_{i,j,k} f^{n+1}_{i,j,k}+\dd_\theta^2 f^{n+1}_{i,j,k} f^{n+1}_{i,j,k} \right]\\
        -\sum_{i,j,k}\Pe\Delta\xi\left[\frac{1}{\Delta x} \left( U^{n+1}_{i+1/2,j,k}-U^{n+1}_{i-1/2,j,k} \right) f^{n+1}_{i,j,k}+\frac{1}{\Delta y} \left( V^{n+1}_{i,j+1/2,k}-V^{n+1}_{i,j-1/2,k} \right) f^{n+1}_{i,j,k}\right]\\
        -\sum_{i,j,k}\frac{\gamma\Delta\xi}{\Delta \theta} \left(W^{n+1}_{i,j,k+1/2}-W^{n+1}_{i,j,k-1/2} \right) f^{n+1}_{i,j,k}.
\end{multline}
    We now use summation by parts \cref{lem:ibp} for the discrete Laplacians, the $\Pe$ and $\gamma$ drift terms. This gives 
    \begin{equation}\label{eq:step1}
        \begin{split}
            \sum_{i,j,k}\frac{\Delta\xi}{\Delta t} \left( f^{n+1}_{i,j,k}-f^n_{i,j,k} \right) f^{n+1}_{i,j,k}  = &- \sum_{i,j,k}\Delta\xi \left [D_T \left(|\dd_x f_{i+1/2,j,k}^{n+1}|^2+|\dd_y f_{i,j+1/2,k}^{n+1}|^2 \right) + |\dd_\theta f^{n+1}_{i,j,k+1/2}|^2 \right ]\\
        &  + \Pe \sum_{i,j,k}\Delta\xi\left[U^{n+1}_{i+1/2,j,k}\dd_xf^{n+1}_{i+1/2,j,k}+V^{n+1}_{i,j+1/2,k}\dd_y f^{n+1}_{i,j+1/2,k}\right]\\
        &  + \gamma \sum_{i,j,k}\Delta\xi W^{n+1}_{i,j,k+1/2}\dd_\theta f^{n+1}_{i,j,k+1/2}.
        \end{split}
    \end{equation}
    We note that, by \cref{eq:scheme_fluxes}, we have 
    \begin{equation}
        U^{n+1}_{i+1/2,j,k}\dd_x f^{n+1}_{i+1/2,j,k}\leq \tfrac{1}{2}(\cos(\theta_k)^++\cos(\theta_k)^-)\dd_x(f^{n+1}_{i+1/2,j,k})^2=\tfrac{1}{2}\cos(\theta_k)\dd_x(f^{n+1}_{i+1/2,j,k})^2,
    \end{equation} 
    and analogously for the $y$-direction. For the $\theta$-direction we have
    \begin{equation}
        \left|W^{n+1}_{i,j,k+1/2}\right|\leq \left|B[c^{n+1}]_{i,j,k+1/2}\right|f^{n+1}_{i,j,k}.
    \end{equation}
    Therefore,  from \cref{eq:step1} we obtain
        \begin{equation}
    \label{eq:intermL2-1}
        \begin{split}
            \sum_{i,j,k}\frac{\Delta\xi}{\Delta t}\left(f^{n+1}_{i,j,k}-f^n_{i,j,k}\right)f^{n+1}_{i,j,k}\leq &-\sum_{i,j,k}\Delta\xi\left[D_T\left(\left|\dd_x f_{i+1/2,j,k}\right|^2+\left|\dd_y f_{i,j+1/2,k}\right|^2\right)+\left|\dd_\theta f^{n+1}_{i,j,k+1/2}\right|^2\right]\\
                    & + \frac{1}{2}\Pe\sum_{i,j,k}\Delta\xi \left[\cos(\theta_k)\dd_x\left(f^{n+1}_{i+1/2,j,k}\right)^2+\sin(\theta_k)\dd_y\left(f^{n+1}_{i,j+1/2,k}\right)^2\right]\\
                    &+ \frac{1}{2}\gamma \sum_{i,j,k}\Delta\xi \left|B[c^{n+1}]_{i,j,k+1/2}\right|\dd_\theta \left(f^{n+1}_{i,j,k+1/2}
                    \right)^2 
        \end{split}
    \end{equation}

    We observe that the sum over the terms premultiplied by $\Pe$ in \cref{eq:intermL2-1} equals zero. Concerning the terms multiplied by $\gamma$, integrating by parts yields
    \begin{align}
        \frac\gamma2\sum_{i,j,k}\Delta\xi \left|B[c^{n+1}]_{i,j,k+1/2}
        \right|\,  \dd_\theta \left|f^{n+1}_{i,j,k+1/2}\right|^2
        &= -\frac\gamma2  \sum_{i,j,k}\Delta\xi\dd_\theta \left|B[c^{n+1}]_{i,j,k}\right|\,  \left|f^{n+1}_{i,j,k+1/2}\right|^2\\
        &\leq \frac\gamma2 \sum_{i,j,k}\Delta\xi \left| \dd_\theta \left|B[c^{n+1}]_{i,j,k}\right|\right|\, \left|f^{n+1}_{i,j,k}\right|^2\\
        &= \int \left|\dd_\theta B[c_h(t^{n+1})]\right| \, \left|f_h(t^{n+1})\right|^2 \dd \xi,  
    \end{align}
    having used the reverse triangle inequality, $\big||a|-|b|\big|\leq |a-b|$ for $a,b\in\mathbb{R}$. Then, the second sum in \cref{eq:intermL2-1} can be estimated as
    \begin{multline}
    \label{eq:intermL2-2}
\frac{1}{2} \Pe\sum_{i,j,k}\Delta\xi  \left[\cos(\theta_k)\dd_x\left(f^{n+1}_{i+1/2,j,k}\right)^2+\sin(\theta_k)\dd_y\left(f^{n+1}_{i,j+1/2,k}\right)^2\right]  \\+ \frac{1}{2}  \gamma \sum_{i,j,k}\Delta\xi  \left|B[c^{n+1}]_{i,j,k+1/2}\right|\dd_\theta \left(f^{n+1}_{i,j,k+1/2}\right)^2 \\
            \leq  \frac\gamma2 \int \left|\dd_\theta B[c_h(t^{n+1})]\right| \, \left|f_h(t^{n+1})\right|^2 \dd \xi. 
    \end{multline}
    Combining \eqref{eq:intermL2-2} and \eqref{eq:intermL2-1} and using $(b-a)b\geq\tfrac{1}{2}(b^2-a^2)$ on the left-hand of \eqref{eq:intermL2-1} for the time-derivative, we obtain
    \begin{align}
    \label{eq:intermL2-3}
        \begin{split}
            \frac{1}{2\Delta t} \left(\|f_h(t^{n+1})\|_{L^2}^2 - \|f_h(t^n)\|_{L^2}^2\right) 
            &\leq - D_T \|\dd_\xx f_h(t^{n+1})\|_{L^2}^2 - \|\dd_\theta f_h(t^{n+1})\|_{L^2}^2\\
            & \phantom{{} \leq} + \frac\gamma2\int \left|\dd_\theta B[c_h(t^{n+1})]\right| \, \left|f_h(t^{n+1})\right|^2 \dd \xi.
        \end{split}
    \end{align}
    Applying the  Cauchy--Schwarz and H\"older inequality, we can estimate the last term in \cref{eq:intermL2-3} as
    \begin{align}
    \label{eq:BcL2}
    \begin{split}
        \frac\gamma2\int \left|\dd_\theta B[c_h(t^{n+1})]\right| \, \left|f_h(t^{n+1})\right|^2 \dd \xi
        &\leq \|\dd_\theta B[c_h(t^{n+1})]\|_{L^2}\|f_h(t^{n+1})\|_{L^4}^2
        \\
        &\leq \|\dd_\theta B[c_h(t^{n+1})]\|_{L^2}\|f_h(t^{n+1})\|_{L^2}^{\frac{3}{2}}\|f_h(t^{n+1})\|_{L^6}^{\frac{1}{2}}.
    \end{split}
    \end{align}
    By the discrete Poincar\'e--Sobolev inequality \cite[Theorem 3.2]{bessemoulin2015discrete}, we have $\|f_h\|_{L^6}\leq C_{PS}\|f_h\|_{1,2}=C_{PS}(\|f_h\|_{L^2}+\|\dd_\xi f_h\|_{L^2})$, for some constant $C_{PS}>0$, which is independent of the mesh size. Therefore, we arrive at the inequality
     \begin{equation}\label{ineq:L2pregronwall}
        \frac{1}{2\Delta t}\left(\|f_h(t^{n+1})\|_{L^2}^2-\|f_h(t^{n})\|_{L^2}^2\right)\leq -D(\varepsilon)\|\dd_\xi f_h(t^{n+1})\|_{L^2}^2+C(\varepsilon)\|\dd_\theta B[c_h(t^{n+1})]\|_{L^2}^2\|f_h(t^{n+1})\|_{L^2}^2,
    \end{equation}
    where $\varepsilon>0$ can be made arbitrarily small at the cost of increasing $C(\varepsilon)>0$ and decreasing $D(\varepsilon)>0$. Hence, multiplying by $\Delta t$ and summing in time gives the result, provided $\Delta t < C' C_T$, with \begin{equation}C'=\tfrac{1}{2}C(\varepsilon)C_B.\end{equation}
\end{proof}
\begin{cor}\label{cor:unifL2rho}
    Provided the conditions for \cref{cor:unifL2} hold, there exists a constant $C>0$, independent of the mesh size, such that the solutions to the numerical scheme \eqref{eq:scheme} satisfy
    \begin{equation}
        \sup_{t\in[0,T]}\|\rho_h(t)\|_{L^2(\Omega)}\leq C.
    \end{equation}
\end{cor}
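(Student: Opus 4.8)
The plan is to reduce the statement to the uniform $L^2$-bound for $f_h$ already established in \cref{cor:unifL2}, by controlling $\|\rho_h\|_{L^2(\Omega)}$ by $\|f_h\|_{L^2(\Sigma)}$ with a constant that does not depend on the mesh. The only structural ingredient needed is the defining relation $\rho_{i,j}^n=\sum_k f_{i,j,k}^n\Delta\theta$, that is, the fact that $\rho_h$ is the angular marginal of $f_h$, obtained by integrating over the orientation variable $\theta\in\mathbb T_{2\pi}$.

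First I would fix a time index $n$ and expand the norm cell by cell,
\begin{equation*}
    \|\rho_h(t^n)\|_{L^2(\Omega)}^2=\sum_{i,j}|\rho_{i,j}^n|^2\Delta\xx=\sum_{i,j}\Big|\sum_k f_{i,j,k}^n\Delta\theta\Big|^2\Delta\xx.
\end{equation*}
For each fixed pair $(i,j)$ I would apply the discrete Cauchy--Schwarz (Jensen) inequality to the inner sum over the $N_\theta$ angular cells, splitting $f_{i,j,k}^n\Delta\theta=(f_{i,j,k}^n\sqrt{\Delta\theta})(\sqrt{\Delta\theta})$, to obtain
\begin{equation*}
    \Big|\sum_k f_{i,j,k}^n\Delta\theta\Big|^2\leq\Big(\sum_k|f_{i,j,k}^n|^2\Delta\theta\Big)\Big(\sum_k\Delta\theta\Big)=2\pi\sum_k|f_{i,j,k}^n|^2\Delta\theta,
\end{equation*}
where I used $\sum_k\Delta\theta=N_\theta\Delta\theta=2\pi$. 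The crucial point is that the resulting factor $2\pi$ is the fixed total angular measure and hence independent of $N_\theta$, which is what keeps the estimate uniform in $h$.

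Summing over $(i,j)$, multiplying by $\Delta\xx$, and recalling $\Delta\xi=\Delta\xx\Delta\theta$, I would arrive at
\begin{equation*}
    \|\rho_h(t^n)\|_{L^2(\Omega)}^2\leq 2\pi\sum_{i,j,k}|f_{i,j,k}^n|^2\Delta\xi=2\pi\|f_h(t^n)\|_{L^2(\Sigma)}^2,
\end{equation*}
so that $\|\rho_h(t^n)\|_{L^2(\Omega)}\leq\sqrt{2\pi}\,\|f_h(t^n)\|_{L^2(\Sigma)}$ for every $n$. Taking the supremum over $t\in[0,T]$ and invoking the uniform bound $\sup_t\|f_h(t)\|_{L^2}\leq C$ from \cref{cor:unifL2} (valid under the stated conditions on $\Delta t$) yields the claim with constant $\sqrt{2\pi}\,C$.

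There is no genuine obstacle in this argument; the single point requiring care is to confirm that the constant produced by the Cauchy--Schwarz step is truly mesh-independent. This holds precisely because $\sum_k\Delta\theta=2\pi$ is fixed, so the factor does not degenerate as $N_\theta\to\infty$ --- the same mechanism underlying the passage from $f$ to its angular marginal $\rho$ at the continuum level via Jensen's inequality.
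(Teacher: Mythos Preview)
Your argument is correct and essentially identical to the paper's proof: both control $\|\rho_h\|_{L^2(\Omega)}$ by $\|f_h\|_{L^2(\Sigma)}$ via Jensen/Cauchy--Schwarz on the angular sum and then invoke \cref{cor:unifL2}. Your version is in fact slightly more careful, as you explicitly track the mesh-independent factor $2\pi$ coming from $\sum_k\Delta\theta$, which the paper suppresses.
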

\begin{proof}
    The result follows by applying Jensen's inequality to $\|\rho_h(t)\|_{L^2(\Omega)}^2$, such that, if $t\in[t^{n-1},t^{n})$,
    \begin{align}
        \|\rho_h(t)\|_{L^2(\Omega)}^2& =\sum_{i,j}\left(\rho_{i,j}^n\right)^2\Delta\xx\\
        &\leq\sum_{i,j}\left(\sum_k f^n_{i,j,k}\Delta\theta\right)^2\Delta\xx\\
        &\leq\sum_{i,j,k}\left(f^n_{i,j,k}\right)^2\Delta\xi.
    \end{align}
    This concludes the proof.
\end{proof}

\section{Convergence of the scheme}\label{sec:conv}

The estimates from \cref{sec:fintest} allow us to derive a convergence result on a finite time interval $[0,T]$ as stated in \cref{thm:conv}, using a compactness method, similar to \cite{filbet2006finite}.
\begin{lem}\label{lem:h4}
    For all sequences $(f_h)_h$ of numerical solutions to the scheme as in \cref{def:scheme}, there exists a $C>0$ such that for all $s>0$ and $z\in\Sigma$,
    \begin{equation}\label{eq:timeshift}
        \int_0^{T-\tau}\int_\Sigma\left[f_h(t+s,\xi)-f_h(t,\xi)\right]\varphi(t,\xi)\dd\xi\dd t\leq Cs\|\varphi\|_{L^2(0,T;H^4(\Sigma))},
    \end{equation}
    and 
    \begin{equation}
        \int_0^T\int_{\Sigma-z}|f_h(t,\xi+z)-f_h(t,\xi)|^2\dd\xi\dd t\leq C|z|,
    \end{equation}
    where $\Sigma-z=\{\xi-z:\xi\in\Sigma\}\cap\Sigma$.
\end{lem}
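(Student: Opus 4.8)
For the time estimate, the plan is to telescope the increment through the scheme. Writing $f_h(t+s)-f_h(t)=\sum_{l}(f^{l+1}-f^l)$ over the time layers $t^l$ lying in $(t,t+s]$ and using \cref{eq:schemef}, each increment equals $\Delta t$ times the discrete spatial operator $\mathcal L_h$ (the right-hand side of \cref{eq:schemef}) applied to $f^{l+1}$. Testing against $\varphi(t)$ and applying the summation by parts of \cref{lem:ibp} transfers the discrete derivatives from the fluxes onto $\varphi$. For the diffusive fluxes I would sum by parts twice, so that the discrete first derivative of $f^{l+1}$ is transferred entirely onto $\varphi$, producing a discrete second difference $\dd^2_\xi\varphi$; this is the crucial point, since $\dd_\xi f_h$ is only controlled in a time-integrated sense and must never be left undifferentiated if we want a bound linear in $s$. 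The $\Pe$-drift and $\gamma$-drift fluxes require only one summation by parts, leaving $f^{l+1}$ (respectively $B[c^{l+1}]f^{l+1}$) paired with a discrete first derivative of $\varphi$. Each resulting term is bounded using the uniform $L^1$-bound (mass conservation, \cref{prop:exist}) and $L^2$-bound on $f^{l+1}$ from \cref{cor:unifL2}, the estimate $\|B[c^{l+1}]\|_{L^2}\le C\|\rho^{l+1}\|_{L^2}\le C\|f^{l+1}\|_{L^2}$ obtained from \cref{cor:disccreg}, and $\|\dd^2_\xi\varphi\|_{L^\infty}\le\|\varphi\|_{C^2}$. This yields $|\int_\Sigma(\mathcal L_h f^{l+1})\varphi(t)\,\dd\xi|\le C\|\varphi(t)\|_{C^2}$ uniformly in $l$ and $h$. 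Summing over the $\lesssim s/\Delta t$ layers contributes $\Delta t\cdot(s/\Delta t)\simeq s$, and integrating in $t$ with Cauchy--Schwarz gives a bound $\le Cs\sqrt T\,\|\varphi\|_{L^2(0,T;C^2)}$. Finally, since $\dim\Sigma=3$, the Sobolev embedding $H^4(\Sigma)\hookrightarrow C^2(\Sigma)$ converts this into the claimed $Cs\|\varphi\|_{L^2(0,T;H^4(\Sigma))}$; this embedding is exactly what fixes the exponent $4$. (Throughout I read the upper limit of the first integral as $T-s$.)

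For the space estimate, I would reduce to one-dimensional shifts. By the triangle inequality applied to the successive shifts in $x$, $y$ and $\theta$, it suffices to bound $\int_\Sigma|f_h(\xi+\eta\mathbf e_d)-f_h(\xi)|^2\,\dd\xi$ for a shift of length $\eta$ in a single coordinate direction $\mathbf e_d$. Representing this difference as the sum of the discrete jumps of $f_h$ across the cell interfaces separating $\xi$ from $\xi+\eta\mathbf e_d$, and applying Cauchy--Schwarz across the overlapping interfaces, a direct computation gives the standard finite-volume bound $\int_\Sigma|f_h(\cdot+\eta\mathbf e_d)-f_h|^2\,\dd\xi\le \eta(\eta+h)\,|f_h|_{1,2}^2$; periodicity keeps the interface bookkeeping clean. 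Since $|z|$ and $h$ are bounded, $\eta(\eta+h)\le C|z|$, so combining the three directions yields $\int_\Sigma|f_h(\cdot+z)-f_h|^2\,\dd\xi\le C|z|\,|f_h|_{1,2}^2$. Integrating in time and invoking $\int_0^T|f_h|_{1,2}^2\,\dd t=\int_0^T\|\dd_\xi f_h\|_{L^2}^2\,\dd t\le C$ from \cref{cor:unifL2} gives the claim.

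The main obstacle is the time estimate, specifically the need to perform the second summation by parts on the diffusive flux so that no undifferentiated $\dd_\xi f_h$ survives: because only the time-integral of $\|\dd_\xi f_h\|_{L^2}^2$ is controlled, a single surviving power of $\dd_\xi f_h$ would force a Cauchy--Schwarz in the time summation and degrade the estimate from $O(s)$ to $O(\sqrt s)$, which is too weak for the compactness argument. Everything else—the uniform bounds on $f_h$, $c_h$ and $B[c_h]$, and the $H^4\hookrightarrow C^2$ embedding—then assembles routinely. The space estimate is essentially the classical finite-volume translation lemma and presents no real difficulty beyond the periodic indexing.
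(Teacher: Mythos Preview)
Your argument is correct and yields the stated bounds, but the route differs from the paper's in an instructive way. For the time estimate the paper performs only \emph{one} summation by parts on the diffusive fluxes, leaving $\dd_\xi f_h$ paired with $\dd_\xi\varphi$. Your claim that this necessarily degrades the bound to $O(\sqrt s)$ is not quite right: writing the telescoped increment as the double sum $\sum_{m=1}^{M}\sum_{n}(\cdot)\,\Delta t^2$ with $M\Delta t\approx s$ and interchanging the order, for each fixed $m$ the inner sum over $n$ is a full space--time integral, so Cauchy--Schwarz uses the $L^2(\Sigma_T)$ bound on $\dd_\xi f_h$ from \cref{cor:unifL2} directly, and the outer sum $\sum_{m}\Delta t$ then supplies the linear factor $s$. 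The paper's route does, however, require an additional preliminary estimate $\|f_h\|_{L^3(\Sigma_T)}\le C$ (obtained via a discrete BV/Gagliardo--Nirenberg inequality) to treat the $\gamma$-drift term through the H\"older splitting $\|B[c_h]\|_{L^6_tL^2_\xi}\|f_h\|_{L^3(\Sigma_T)}\|\dd_\theta\varphi_h\|_{L^2_tL^6_\xi}$. Your double summation by parts bypasses this $L^3$ estimate entirely, trading it for the uniform $L^\infty_tL^2_\xi$ bound on $f_h$ together with the embedding $H^4(\Sigma)\hookrightarrow C^2(\Sigma)$; in that sense your argument is the more elementary one, though both arrive at the exponent~$4$. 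For the space translate your argument and the paper's coincide with the classical finite-volume translation lemma.
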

\begin{proof}
    Following \cite[Proof of Proposition 4.1]{filbet2006finite}, we begin by showing that there exists $C>0$, such that
    \begin{equation}
        \|f_h\|_{L^3(\Sigma_T)}\leq C.
    \end{equation}
    To this end, we use the discrete $\mathrm{TV}$ inequality \cite[Theorem 2.4.1]{bessemoulin2015discrete}, so that for $u=|f_h|^{2}$ and a constant $C_{\mathrm{TV}}>0$, we have, using the notation $\bar{u}=\tfrac{1}{|\Sigma|}\int_\Sigma u\dd\xi$,
    \begin{equation}
        \|f_h\|_{L^3}^3=\|u\|_{L^{\frac{3}{2}}}^{\frac{3}{2}}\leq\left(\|u-\bar{u}\|_{L^{\frac{3}{2}}}+\|\bar{u}\|_{L^{\frac{3}{2}}}\right)^{\frac{3}{2}}\leq\left(C_{\mathrm{TV}}|u|_{1,1}+\|\bar{u}\|_{L^{\frac{3}{2}}}\right)^{\frac{3}{2}}.
    \end{equation}
    Hence, using $(x+y)^{\frac{3}{2}}\leq \tfrac{3}{2}(x^{\frac{3}{2}}+y^{\frac{3}{2}})$ for $x,y\in\mathbb{R}_{\geq 0}$, we get
    \begin{align}
        \|f_h\|_{L^3}^3&\leq \tfrac{3}{2}\left(C_{\mathrm{TV}}^{\frac{3}{2}}\|\dd_\xi |f_h|^2\|_{L^1}^{\frac{3}{2}}+\|f_h\|_{L^2}^{3}\right) \leq C\left(\|\dd_\xi |f_h|^2\|_{L^1}^{\frac{3}{2}}+\|f_h\|_{L^2}^{3}\right).
    \end{align}
   Using $a^2-b^2=(a+b)(a-b)$, we estimate $\dd_\xi |f_h|^2$ 
    and get
    \begin{equation}
    \begin{split}
        \int_0^T\|f_h(t)\|_{L^3}^3\dd t&\leq C\int_0^T\left(\left(\|f_h(t)\|_{L^2}\|\dd_\xi f_h(t)\|_{L^2}\right)^{\frac{3}{2}}+\|f_h(t)\|_{L^2}^3\right)\dd t.
    \end{split}
    \end{equation}
    Finally, we use the H\"older inequality and the uniform $L^2$-bounds on $f_h$ and $\dd_\xi f_h$ from \cref{cor:unifL2} so that there is a constant $C>0$ independent of $h$ such that $\|f_h\|_{L^3(\Sigma_T)}\leq C$.

    Now turning to the time translate, for $\varphi\in L^2(0,T;H^4(\Sigma))$, we get, defining $M\Delta t\leq s<(M+1)\Delta t$ for $M\in\mathbb{N}$, and for $\varphi^n_{i,j,k}=\int_{t^{n-1}}^{t^n}\int_{C_{i,j,k}}\varphi(t,\xi)\dd\xi\dd t$,
    \begin{equation}
        \begin{split}
            I_s=&\int_0^{T-s}\int_\Sigma\left(f(t+s,\xi)-f(t,\xi)\right)\varphi(t,\xi)\dd\xi\dd t\\=&\sum_{n=1}^{N_T-M}\sum_{i,j,k}\left(f^{n+M}_{i,j,k}-f^n_{i,j,k}\right)\varphi^n_{i,j,k}\Delta \xi\Delta t\\
            =&\sum_{n=1}^{N_T-M}\sum_{i,j,k}\sum_{m=1}^M\frac{1}{\Delta t}\left(f^{n+m}_{i,j,k}-f^{n+m-1}_{i,j,k}\right)\varphi^n_{i,j,k}\Delta \xi\Delta t^2.
        \end{split}
    \end{equation}
    We insert the scheme as in \cref{eq:scheme} into $I_s$ to obtain, after summation by parts,
    \begin{align}
        I_s=
        &-D_T \sum_{n=1}^{N_T-M}\sum_{i,j,k}\sum_{m=1}^M \left(\dd_x f^{n+m}_{i+1/2,j,k}\dd_x\varphi^n_{i+1/2,j,k}+\dd_y f^{n+m}_{i,j+1/2,k}\dd_y\varphi^n_{i,j+1/2,k}\right)\Delta\xi\Delta t^2\\
        &+\Pe\sum_{n=1}^{N_T-M}\sum_{i,j,k}\sum_{m=1}^M\left(U^{n+m}_{i+1/2,j,k}\dd_x\varphi^n_{i+1/2,j,k}+V^{n+m}_{i,j+1/2,k}\dd_y\varphi^n_{i,j+1/2,k}\right)\Delta\xi\Delta t^2\\
        &- \sum_{n=1}^{N_T-M}\sum_{i,j,k}\sum_{m=1}^M \dd_\theta f^{n+m}_{i,j,k+1/2}\dd_\theta\varphi^n_{i,j,k+1/2}\Delta\xi\Delta t^2+I_s^\gamma,
    \end{align}
    where $I^\gamma_s$ is defined as
    \begin{equation}
        I_s^\gamma =\gamma\sum_{n=1}^{N_T-M}\sum_{i,j,k}\sum_{m=1}^M W^{n+m}_{i,j,k+1/2} \dd_\theta\varphi^n_{i,j,k+1/2} \Delta\xi\Delta t^2.
    \end{equation}
    It is clear that, except for the $I^\gamma_s$ term, using Cauchy--Schwarz, we get, for some constant $C>0$, using the uniform $L^2$-bounds on $f_h$ and $\dd_\xi f_h$ from \cref{cor:unifL2},
    \begin{equation}
        |I_s|\leq C s\|\varphi\|_{L^2(0,T;H^4(\Sigma))}+|I_s^\gamma|.
    \end{equation}
    For the $I^\gamma_s$ term we get
    \begin{equation}
        |I_s^\gamma|\leq\gamma\sum_{n=1}^{N_T-M}\sum_{i,j,k}\sum_{m=1}^M|B[c^{n+m}]_{i,j,k+1/2}|(|f_{i,j,k}^{n+m
        }|+|f_{i,j,k+1}^{n+m}|)\dd_\theta\varphi^n_{i,j,k+1/2}\Delta\xi\Delta t^2.
    \end{equation}
    By the definitions for $B\in\{B_0,B_\lambda,B_\tau\}$ and the discrete elliptic estimates on $c_h$ in terms of $L^2(\Omega)$-norms of $\rho_h$ (cf.~\cref{cor:disccreg}), we obtain, by \cref{cor:unifL2rho}, 
    \begin{equation}
        \|B[c_h(t)]\|_{L^2}\leq C_B\|\rho_h(t)\|_{L^2(\Omega)}\leq C,
    \end{equation}
    for some $C>0$. Hence, by using H\"older's inequality, we can estimate, for some constant $C>0$,
    \begin{equation}
    \begin{split}
        |I^\gamma_s|
        &\leq 2\gamma \sum_{m=1}^M \|B[c_h]\|_{L^6(0,T;L^2(\Sigma))}\|f_h\|_{L^3(\Sigma_T)}\|\dd_\theta\varphi_h\|_{L^2(0,T;L^6(\Sigma))}\Delta t\\
        &\leq  Cs\|\dd_\theta\varphi_h\|_{L^2(0,T;L^6(\Sigma))}\\
       &\leq  Cs\|\varphi\|_{L^2(0,T;H^4(\Sigma))},
    \end{split}
    \end{equation}
    where we used \cite[Theorem 3 in Section 5.8.2]{evans2010partial} for the difference quotient estimate $\|\dd_\theta\varphi_h\|_{L^6(\Sigma)}\leq C\|\partial_\theta\varphi\|_{L^6(\Sigma)}$ for some constant $C>0$, and we used the Sobolev embedding in the last line.
    
    Finally, the phase-space translate result can be obtained in a similar fashion as in a classical theorem for finite volume schemes \cite[Lemma 9.3]{eymard2000finite}. Indeed, let $(i_z,j_z,k_z)$ be such that $z\in C_{i_z,j_z,k_z}$, then
    \begin{align}
        &\int_0^T\int_{\Sigma-z}\left|f_h(t,\xi+z)-f_h(t,\xi)\right|^2\dd\xi\dd t=\sum_{n=1}^{N_T}\sum_{\substack{i,j,k:\\ C_{i,j,k}\subseteq\Sigma-z}}\left|f^n_{i+i_z,j+j_z,k+k_z}-f^n_{i,j,k}\right|^2\Delta \xi\Delta t\\
        &=\sum_{n=1}^{N_T}\sum_{\substack{i,j,k:\\ C_{i,j,k}\subseteq\Sigma-z}}\left(f^n_{i+i_z,j+j_z,k+k_z}-f^n_{i,j,k}\right)\times\\& \quad \Bigg[\sum_{i'=i}^{i+i_z-1} \dd_x f^n_{i'+1/2,j,k}\Delta x+\sum_{j'=j}^{j+j_z-1}\dd_y f^n_{i+i_z,j'+1/2,k}\Delta y+\sum_{k'=k}^{k+k_z-1}\dd_\theta f^n_{i+i_z,j+j_z,k'+1/2}\Delta\theta\Bigg]\Delta \xi\Delta t.
    \end{align}
    Hence, by Cauchy--Schwarz, we get
    \begin{equation}
    \begin{split}
        \int_0^T\int_{\Sigma-z}|f_h(t,\xi+z)-f_h(t,\xi)|^2\dd\xi\dd t&\leq C|z|\|f_h\|_{L^2(\Sigma_T)}\|\dd_\xi f_h\|_{L^2(\Sigma_T)} \leq C|z|,
        \end{split}
    \end{equation}
    having used \cref{cor:unifL2}. This concludes the proof.
\end{proof}
\begin{lem}\label{lem:comp}
    For all sequences $(f_h)_h$ of numerical solutions to the scheme as in \cref{def:scheme}, there is a subsequence that converges strongly to a function $f\in L^2(\Sigma_T)$, the discrete gradient $(\dd_\xi f_h)_h$ converges weakly in $L^2(\Sigma_T)$ to $\nabla_\xi f \in L^2(\Sigma_T)$ and $B[c_h]$ converges weakly-$\ast$ in $L^\infty(0,T;L^2)$ to $B[c]$ such that $c$ is the unique strong solution for the elliptic equation with $\rho=\int_0^{2\pi} f\dd\theta$ (possibly up to another subsequence).
\end{lem}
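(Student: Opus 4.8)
The plan is to establish strong $L^2$-compactness of $(f_h)_h$ via a Kolmogorov--Riesz--Fréchet-type argument, using the two estimates from \cref{lem:h4} as the space and time equicontinuity inputs. First I would recall that by \cref{cor:unifL2} the family $(f_h)_h$ is uniformly bounded in $L^2(\Sigma_T)$ and $(\dd_\xi f_h)_h$ is uniformly bounded in $L^2(\Sigma_T)$. The spatial translate estimate in \cref{lem:h4} gives uniform control of $\int_0^T\int_{\Sigma-z}|f_h(t,\xi+z)-f_h(t,\xi)|^2\dd\xi\dd t\leq C|z|$, which is exactly the uniform spatial equicontinuity needed. The delicate point is the time translate, since \cref{lem:h4} only provides a weak (tested against $H^4$) bound rather than an $L^2$-in-time modulus of continuity. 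I would therefore appeal to a nonlinear compactness result in the spirit of the Aubin--Lions or Kruzhkov-type lemmas used in \cite{filbet2006finite}: the combination of a uniform spatial $BV$/$H^1$-type bound and the weak time-equicontinuity \eqref{eq:timeshift} suffices to extract a subsequence converging strongly in $L^2(\Sigma_T)$. Concretely, one mollifies in space, uses the spatial translate estimate to control the mollification error uniformly, and uses the time-translate estimate against smooth test functions to obtain compactness of the mollified family in $C([0,T];L^2)$; letting the mollification parameter vanish yields strong convergence of $(f_h)_h$ to some $f\in L^2(\Sigma_T)$ along a subsequence.

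Having obtained strong convergence of $f_h\to f$, I would next extract the weak limit of the discrete gradients. Since $(\dd_\xi f_h)_h$ is bounded in $L^2(\Sigma_T)$ by \cref{cor:unifL2}, there is a further subsequence along which $\dd_\xi f_h\rightharpoonup g$ weakly in $L^2(\Sigma_T)$ for some $g$. The standard step is to identify $g=\nabla_\xi f$ in the distributional sense: testing the discrete gradient against a smooth compactly supported test function and performing discrete summation by parts (\cref{lem:ibp}), the finite-difference quotients of the test function converge to its exact derivatives while $f_h\to f$ strongly, so the limit integral identity forces $g=\nabla_\xi f$. This also shows $f\in L^2(0,T;H^1(\Sigma))$, consistent with the regularity required in \cref{def:solf}.

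It then remains to pass to the limit in the chemical field and interaction term. From $f_h\to f$ strongly in $L^2(\Sigma_T)$, the discrete densities satisfy $\rho_h\to\rho$ in $L^2(\Omega_T)$ with $\rho=\int_0^{2\pi}f\dd\theta$. By the discrete elliptic regularity of \cref{cor:disccreg}, $\|c_h\|_{1,2}$ and $\|\dd_\xx^2 c_h\|_{L^2}$ are controlled by $\|\rho_h\|_{L^2(\Omega)}$, which is uniformly bounded by \cref{cor:unifL2rho}; hence $(c_h)_h$ is bounded in $L^\infty(0,T;H^2)$-type discrete norms and, along a further subsequence, $c_h$ and its discrete derivatives converge weakly to the corresponding derivatives of a limit $c$. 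Passing to the limit in the discrete elliptic scheme \eqref{eq:discc}, using that the finite-difference stencils for the Laplacian are consistent, identifies $c$ as the unique strong solution of \cref{elliptic_c} with source $\rho$; uniqueness of this strong solution (established in the proof of \cref{prop:exist}) fixes the whole limit independently of the subsequence. Finally, since $B[c_h]$ is a bounded linear combination of the discrete first and second derivatives of $c_h$ (see \eqref{eq:B0_sch}--\eqref{eq:btaudisc}), it is bounded in $L^\infty(0,T;L^2)$ by \cref{cor:disccreg} and \cref{cor:unifL2rho}, so we extract weak-$\ast$ convergence $B[c_h]\overset{\ast}{\rightharpoonup}B[c]$, where the consistency of the discrete gradient and Hessian stencils ensures the limit is the continuum interaction functional evaluated at $c$. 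I expect the main obstacle to be the strong time-compactness step: reconciling the merely weak time-translate bound \eqref{eq:timeshift} with the strong spatial estimate to produce genuine $L^2(\Sigma_T)$ compactness, which requires the nonlinear Aubin--Lions-type argument rather than a direct application of Kolmogorov's criterion.
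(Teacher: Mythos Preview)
Your proposal is correct and follows essentially the same strategy as the paper. The paper makes the strong compactness step concrete by invoking the Aubin--Lions--Simon theorem directly with the chain of embeddings $X_3(\Sigma)\subset L^2(\Sigma)\subset (H^4(\Sigma))'$ taken from \cite{filbet2006finite}, rather than your mollification sketch, but this is the same mechanism you describe; the remaining steps (weak convergence of discrete gradients and their identification, elliptic limit for $c$, and weak-$\ast$ convergence of $B[c_h]$) match the paper almost exactly.
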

\begin{proof}
    By the Banach--Alaoglu theorem and the uniform bounds of \cref{cor:unifL2} on $f_h$ in $L^2(\Sigma_T)$ we can extract a subsequence that converges weakly in $L^2(\Sigma_T)$ to some $f\in L^2(\Sigma_T)$. We first show that this convergence is strong in $L^2(\Sigma_T)$.
    
    By \cref{lem:h4} we have that $(f_h)_h$ is bounded in $L^2(0,T;X_3(\Sigma))$, where $X_3(\Sigma)$ is as in \cite[Section 4]{filbet2006finite}, and that \begin{equation}
        \|f_h(t+s)-f_h(t)\|_{L^2(0,T;(H^4(\Sigma))')}\leq Cs.
    \end{equation} 
    We also have the continuous embeddings 
    \begin{equation}
        X_3(\Sigma)\subset L^2(\Sigma)\subset (H^4(\Sigma))',
    \end{equation}
    and $X_3(\Sigma)\subset L^2(\Sigma)$ is a compact embedding by the Riesz--Kolmogorov--Fr\'echet theorem \cite[Theorem 4.26]{brezis2010functional}. Hence, by the Aubin--Lions--Simon theorem \cite[Theorem 5]{simon1986compact}, we have that $f_h\to f$ strongly in $L^2(\Sigma_T)$ (up to a subsequence).
    
    From the Banach-Alaoglu theorem, we derive that $\dd_\xi f_h\rightharpoonup \nabla_\xi f$, weakly in $L^2(\Sigma_T)$. 
    
    By the uniform bounds in $L^\infty(0,T;L^2(\Omega))$ for $c_h,\dd_\xx c_h$ and $\dd_\xx^2 c_h$ of \cref{cor:disccreg}, it follows that there are functions $c,\mathbf{g}$ and $\mathbf{M}$ in $L^\infty(0,T;L^2(\Omega))$ such that there is a subsequence and there is component-wise convergence 
    \begin{equation}
        c_h\stackrel{\ast}{\rightharpoonup} c,\quad \dd_\xx c_h \stackrel{\ast}{\rightharpoonup} \mathbf{g},\quad \dd_\xx^2 c_h\stackrel{\ast}{\rightharpoonup}\mathbf{M},
    \end{equation}
    weakly-$\ast$ in $L^\infty(0,T;L^2(\Omega))$.
    Similar as in \cite[Lemma 4.4]{chainais2003finite}, it follows that $\mathbf{g}=\nabla_\xx c$ and $\mathbf{M}=\nabla_\xx^2 c$ in distribution.
    
    Following the argument in \cite[Proposition 4.2]{filbet2006finite}, we next show that the limit $c$ satisfies the elliptic equation $0=\Delta c-\alpha c+\rho$, where $\rho$ is the limit of $\rho_h$, and $\rho=\int_0^{2\pi} f(t,\xi)\dd\theta$ by the convergence of $f_h$. To this end,  let $\varphi\in C^\infty(\overline{\Omega_T})$ be given and define
    \begin{align}
        I_h=&\int_{\Omega_T}\left(-\dd_\xx c_h\cdot\nabla_\xx\varphi+(\rho_h-\alpha c_h)\varphi\right)\dd\xx\dd t,
    \end{align}
    and show that 
    \begin{align}
        \label{eq:chcrho}
        I_h \to \int_{\Omega_T}\left(-\nabla_\xx c\cdot\nabla_\xx\varphi+(\rho-\alpha c)\varphi\right)\dd\xx\dd t, 
    \end{align}
    as well as 
    \begin{align}
        \label{eq:chcrhoscheme}
        I_h \to \int_{\Omega_T}\left(-\dd_\xx c_h\cdot\nabla_\xx\varphi+(\rho_h-\alpha c_h)\varphi\right)\dd\xx\dd t \to 0,
    \end{align}
    as $h\to 0$.

    The first convergence in \cref{eq:chcrho} follows from the weak-$\ast$ convergence. For the second convergence, \cref{eq:chcrhoscheme}, we multiply the equation for $c_h$,  \cref{eq:discc}, by $\varphi^n_{i,j}=\int_{t^{n-1}}^{t^n}\int_{C_{i,j}}\varphi(t,\xx)\dd\xx \dd t$ and  $\Delta \xx, \Delta t$,  and sum integrate over all $i,j,n$ to get
    \begin{align}
        I_h^S = \sum_{n=0}^{N_T-1}\sum_{i,j} \left(\dd_x^2 c_{i,j}^n + \dd_y^2 c_{i,j}^n \right)  \varphi_{i,j}^n \Delta \xx \Delta t + \sum_{n=0}^{N_T-1}\sum_{i,j}  ( - \alpha c_{i,j}^n+\rho_{i,j}^n) \varphi_{i,j}^n \Delta \xx \Delta t = 0.
    \end{align}
    Then, a discrete integration by parts yields
        \begin{align}
            I_h^S 
            &=\sum_{n=0}^{N_T-1}\sum_{i,j}\left(\dd_x c_{i+1/2,j}^{n}\dd_x\varphi^n_{i+1/2,j}+\dd_y c_{i,j+1/2}^{n}\dd_y\varphi^n_{i,j+1/2}\right)\Delta\xx\Delta t\\
            &\qquad -\sum_{n=0}^{N_T-1}\sum_{i,j}(\rho^{n}_{i,j}-\alpha c^{n}_{i,j})\varphi^n_{i,j}\Delta\xx\Delta t.
        \end{align}
    We now show that $I_h-I_h^S\to 0$, as $h\to 0$. Using the mean value theorem, \cref{cor:unifL2rho} and \cref{cor:disccreg} and writing $\varphi^n_{i,j}=\varphi(t^n,\xx_{i,j})$,
    \begin{equation}
        \begin{split}
            &\int_{\Omega_T}\left(\rho_h-\alpha c_h\right)\varphi\dd\xx\dd t-\sum_{n=0}^{N_T-1}\sum_{i,j}(\rho^{n}_{i,j}-\alpha c^{n}_{i,j})\varphi^n_{i,j}\Delta\xx\Delta t\\
            =&\sum_{n=0}^{N_T-1}\sum_{i,j}\int\limits_{t^n}^{t^{n}}\int\limits_{C_{i,j}}\left(\rho^{n}_{i,j}-\alpha c^{n}_{i,j}\right)
            \left(\varphi(t,\xx) - \varphi^n_{i,j}\right)\dd\xx\dd t\\
            \to& \ 0.
        \end{split}
    \end{equation}
    Similarly, for the gradient terms, we have
    \begin{equation}
        \begin{split}
            &\int_{\Omega_T}\dd_x c_h\partial_x \varphi\dd\xx\dd t-\sum_{n=0}^{N_T-1}\sum_{i,j}\dd_x c_{i+1/2,j}^{n}\dd_x\varphi^n_{i+1/2,j}\Delta\xx\Delta t\\
            =&\sum_{n=0}^{N_T-1}\sum_{i,j}\dd_x c^{n}_{i+1/2,j}
            \left(\int_{t^{n-1}}^{t^n}\int_{C_{i+1/2,j}}\partial_x\varphi - \dd_x\varphi^n_{i+1/2,j} \dd\xx\dd t \right)\\
            \to& \ 0.
        \end{split}
    \end{equation}
    In conclusion, we have shown that
    \begin{equation}
            I_h-I_h^S\to 0.
    \end{equation}
    This implies that $c$ solves the elliptic equation on $\Omega_T$. Similarly, using the $L^\infty(0,T;L^2(\Omega))$ bounds on $c_h$ and $\dd_\xx c_h$ we can also show that $c$ solves the elliptic equation pointwise in time and in the strong sense.
    
    Finally, for each $B\in\{B_0,B_\lambda,B_\tau\}$ we have $B[c_h]\stackrel{\ast}{\rightharpoonup} B[c]$ weakly-$\ast$ in $L^\infty(0,T;L^2(\Sigma))$. Indeed, we have the following computations.

    First we rewrite the difference $B[c_h]-B[c]$ as follows. For $(x,y,\theta)\in C_{i,j,k+1/2}$ and $t\in (t^{n-1}, t^n]$, we observe that
    \begin{align}
        B_0[c_h] - B_0[c] = & \mathbf{n}(\theta_{k+1/2})\cdot D c_{i,j}^n - \mathbf{n}(\theta)\cdot\nabla_\xx c \\
        = & \left(\mathbf{n}(\theta_{k+1/2})-\mathbf{n}(\theta)\right)\cdot Dc_{i,j}^n-\mathbf{n}(\theta)\cdot\left(\nabla_\xx c-Dc_{i,j}^n\right).
    \end{align}
    as well as 
    \begin{align}
        B_\lambda[c_h] - B_\lambda[c] = & \ \mathbf{n}(\theta_{k+1/2})\cdot\discgrad c_{i,j,k+1/2}^n-\mathbf{n}(\theta)\cdot\nabla_\xx c_\lambda\\
        = & \ \left(\mathbf{n}(\theta_{k+1/2})-\mathbf{n}(\theta)\right)\cdot Dc_{i,j,k+1/2}^n-\mathbf{n}(\theta)\cdot\left(\nabla_\xx c_\lambda-Dc_{i,j,k+1/2}^n\right)
    \end{align}
    and
    \begin{align}
        B_\tau[c_h] - B_\tau[c] & = \ \ \ \mathbf{n}(\theta_{k+1/2})\cdot\discgrad c_{i,j}^n+\tau\mathbf{n}(\theta_{k+1/2})\cdot\dischess c_{i,j}^n\mathbf{e}(\theta_{k+1/2})-\mathbf{n}(\theta)\cdot\nabla c-\tau\mathbf{n}(\theta)\nabla_\xx^2 c\mathbf{e}(\theta)\\
        & =  \ \left(\mathbf{n}(\theta_{k+1/2})-\mathbf{n}(\theta)\right)\cdot Dc_{i,j}^n-\mathbf{n}(\theta)\cdot\left(\nabla_\xx c-Dc_{i,j}^n\right)\\
        &\quad + \ \tau\left(\mathbf{e}(2\theta_{k+1/2})-\mathbf{e}(2\theta)\right)\cdot \tilde{H}c_{i,j}^n-\tau\mathbf{e}(2\theta)\cdot\left(\tilde{\nabla}_\xx^2 c-\tilde{H}c_{i,j}^n\right),
    \end{align}
    and where we used the notation 
    \begin{equation}
        \tilde{H}c_{i,j}=\left(D_xD_y c_{i,j},\tfrac{1}{2}\left(\dd_y^2 c_{i,j}-\dd_x^2 c_{i,j}\right)\right)^\mathsf{T}, \quad \tilde{\nabla}_\xx^2 c=\left(\partial_{xy} c,\tfrac{1}{2}(\partial_y^2 c-\partial_x^2 c)\right)^{\mathsf{T}},
    \end{equation}
    to rewrite the Hessian term, and the identity
    \begin{equation}
        \mathbf{n}(\theta)\cdot \begin{pmatrix}
            a & c\\
            c & d
        \end{pmatrix}\mathbf{e}(\theta)=\mathbf{e}(2\theta)\cdot\begin{pmatrix}
            c\\ \tfrac{1}{2}(d-a)
        \end{pmatrix}.
    \end{equation}
    Now, let us begin by addressing the term $B_0$. To this end, we let $\varphi\in L^1(0,T;L^2(\Sigma))$ and observe that
    \begin{align}
        \mathcal{I}_h&= \int_{\Sigma_T}\left(B_0[c_h]-B_0[c]\right)\varphi\dd\xi\dd t\\
        &=\sum_{n=1}^{N_T}\int_{t^{n-1}}^{t^n}\sum_{i,j,k} \int_{C_{i,j,k+1/2}}\left(\mathbf{n}(\theta_{k+1/2})\cdot Dc_{i,j}^n-\mathbf{n}(\theta)\cdot\nabla_\xx c\right)\varphi\dd\xi\dd t\\
        &=\sum_{n=1}^{N_T}\int_{t^{n-1}}^{t^n}\sum_{i,j,k} \int_{C_{i,j,k+1/2}}\left(\left(\mathbf{n}(\theta_{k+1/2})-\mathbf{n}(\theta)\right)\cdot Dc_{i,j}^n - \mathbf{n}(\theta)\cdot\left(\nabla_\xx c-Dc_{i,j}^n\right)\right)\varphi\dd\xi\dd t.
    \end{align}
    Passing to the modulus, we get, for some $C>0$,
    \begin{align}
        \left|\mathcal{I}_h\right|&\leq \ \ C\Bigg(\|\mathbf{n}_h - \mathbf{n}\|_{L^\infty}\|\dd_\xx c_h\|_{L^\infty(0,T;L^2(\Omega))}\|\varphi\|_{L^1(0,T;L^2)}\\
        &\qquad\qquad +\left|\int_{\Sigma_T}\left(\partial_x c-\dd_x c_h\right)\sin(\theta)\varphi\dd\xi\dd t\right|+\left|\int_{\Sigma_T}\left(\partial_y c-\dd_y c_h\right)\cos(\theta)\varphi\dd\xi\dd t\right|\Bigg),
    \end{align}
    where we use the notation $\mathbf{n}_h$ to denote the piecewise-constant interpolation, such that $\mathbf{n}_h(\xi)=\mathbf{n}(\theta_{k+1/2})$ when $\xi\in C_{i,j,k+1/2}$.

    By the mean-value theorem, we have
    \begin{equation}
        \|\mathbf{n}_h-\mathbf{n}\|_{L^\infty}\leq h.
    \end{equation}
    We also have that $\sin(\theta)\varphi,\cos(\theta)\varphi\in L^1(0,T;L^2)$, and $\dd_x c_h\stackrel{\ast}{\rightharpoonup} \partial_x c$ and $\dd_y c_h\stackrel{\ast}{\rightharpoonup}\partial_y c$ weakly-$\ast$ in $L^\infty(0,T;L^2(\Omega))$. Hence, as $h\to 0$, we have $|\mathcal{I}_h|\to 0$, i.e.,
    \begin{equation}
         B_0[c_h]\stackrel{\ast}{\rightharpoonup}B[c], \quad \mathrm{ in } \ L^\infty(0,T;L^2).
    \end{equation}
    Similarly, we can show that 
    \begin{equation}
        B_\lambda[c_h]\stackrel{\ast}{\rightharpoonup}B_\lambda[c], \quad B_\tau[c_h]\stackrel{\ast}{\rightharpoonup} B_\tau[c], \quad \mathrm{ in } \ L^\infty(0,T;L^2).
    \end{equation}
    This concludes the proof.
\end{proof}
\begin{proof}[Proof of Theorem 2.1]
    Let $\varphi\in C^\infty(\overline{\Sigma_T})$ such that $\varphi(T)=0$ and define the error term $\epsilon(h)$ as 
    \begin{equation}
        \label{eq:error}
        \begin{split}
            \epsilon(h)=&-\int_0^T\int_\Sigma f_h\partial_t\varphi\dd\xi\dd t-\int_0^{T}\int_{\Sigma} (D_T\dd_\xx f_h-\Pe\mathbf{e}_\theta f_h)\cdot\nabla_\xx\varphi + (\dd_\theta f_h-\gamma B[c_h]f_h) \, \partial_\theta\varphi\dd\xi\dd t\\
            &-\int_\Sigma f_h(0)\varphi(0)\dd\xi,
        \end{split}
    \end{equation}
    By \cref{lem:comp}, we have that $\epsilon(h)$ converges to the left-hand side of \cref{eq:weakeq}, that is, 
    \begin{equation}
        \begin{split}
            \epsilon(h)\to& -\int_0^T\int_\Sigma \left[f\partial_t\varphi -(D_T\nabla_\xx f-\Pe\mathbf{e}_\theta f)\cdot\nabla_\xx\varphi-(\partial_\theta f-\gamma B[c] f)\partial_\theta\varphi \right]\dd\xi\dd t\\&-\int_\Sigma f(0)\varphi(0)\dd\xi.
        \end{split}
    \end{equation}
    It remains to prove that $\epsilon(h)\to 0$ for the subsequence of \cref{lem:comp}, so that the limit $f$ is indeed a weak solution as in \cref{def:solf}.

    To do this, we compare $\epsilon(h)$ with the scheme. We multiply the scheme as in \cref{eq:scheme} by $\varphi^{n+1}_{i,j,k}$, where 
    \begin{equation}
        \label{eq:defn-disc-test-fun}
        \varphi^n_{i,j,k}=\frac{1}{\Delta\xi}\int_{C_{i,j,k}}\varphi(t^n,\xi)\dd\xi,
    \end{equation}
    and sum from $n=0$ to $N_T$ and over all cells so that  
    \begin{equation}
    \begin{split}
        0=\sum_{n=0}^{N_T}\sum_{i,j,k}\Bigg[&\frac{f^{n+1}_{i,j,k}-f^n_{i,j,k}}{\Delta t}+\dd_x F^{x,n+1}_{i,j,k}+\dd_y F^{y,n+1}_{i,j,k}
        +\dd_\theta F^{\theta,n+1}_{i,j,k}\Bigg]\varphi^{n+1}_{i,j,k}\Delta\xi\Delta t.
    \end{split}
    \end{equation}
    By summation by parts and $\varphi(T)=0=\varphi^{N_T+1}_{i,j,k}$ we obtain
    \begin{equation}
    \begin{split}
         0=&-\sum_{i,j,k}\left[\sum_{n=1}^{N_T}f^{n+1}_{i,j,k}(\varphi^{n+1}_{i,j,k}-\varphi^{n}_{i,j,k}) +\varphi^{1}_{i,j,k}f^{0}_{i,j,k}\right]\Delta\xi\\
         &-\sum_{n=0}^{N_T}\sum_{i,j,k}\left[\dd_x\varphi^{n+1}_{i+1/2,j,k}F^{x,n+1}_{i+1/2,j,k}+\dd_y\varphi^{n+1}_{i,j+1/2,k}F^{y,n+1}_{i,j+1/2,k}+\dd_\theta\varphi^{n+1}_{i,j,k+1/2}F^{\theta,n+1}_{i,j,k+1/2}\right]\Delta\xi\Delta t.
    \end{split}
    \end{equation}
    We compare each component of the scheme with its continuum counterpart. Similar as in \cite{bailo2020convergence}, we write
    \begin{equation}
        0=\hat{\mathcal{T}}(h)+\hat{\mathcal{D}}(h)+\hat{\mathcal{P}}(h)+\hat{\mathcal{U}}(h)+\hat{\mathcal{E}}(h),
    \end{equation}
    where 
    \begin{align}
        \hat{\mathcal{T}}(h)=&-\sum_{i,j,k}\left[\sum_{n=1}^{N_T}f^n_{i,j,k}(\varphi^{n+1}_{i,j,k}-\varphi^{n}_{i,j,k})+\varphi^1_{i,j,k}f^0_{i,j,k}\right]\Delta \xi,\\
        \hat{\mathcal{D}}(h)=&\quad \sum_{n=0}^{N_T}\sum_{i,j,k}\left[D_T(\dd_x\varphi^{n+1}_{i+1/2,j,k}\dd_x f^{n+1}_{i+1/2,j,k}+\dd_y\varphi^{n+1}_{i,j+1/2,k}\dd_y f^{n+1}_{i,j+1/2,k})+\dd_\theta\varphi^{n+1}_{i,j,k+1/2}\dd_\theta f^{n+1}_{i,j,k+1/2}\right]\Delta\xi\Delta t,\\
        \hat{\mathcal{P}}(h)=&-\sum_{n=0}^{N_T}\sum_{i,j,k}\Pe\left[\dd_x\varphi^{n+1}_{i+1/2,j,k}\cos(\theta_k)f^{n+1}_{i,j,k}+\dd_y\varphi^{n+1}_{i,j+1/2,k}\sin(\theta_k)f^{n+1}_{i,j,k}\right]\Delta\xi\Delta t,\\
        \hat{\mathcal{U}}(h)=&-\sum_{n=0}^{N_T}\sum_{i,j,k}\gamma\left[\dd_\theta\varphi^{n+1}_{i,j,k+1/2}B[c^{n+1}]_{i,j,k+1/2}f^{n+1}_{i,j,k}\right]\Delta\xi\Delta t,\\
        \hat{\mathcal{E}}(h)=& \quad \sum_{n=0}^{N_T}\sum_{i,j,k}\Pe\left[\Delta x(\cos(\theta_k))^-\dd_x f_{i+1/2,j,k}^{n+1}\dd_x\varphi^{n+1}_{i+1/2,j,k}+\Delta y(\sin(\theta_k))^-\dd_y f_{i,j+1/2,k}^{n+1}\dd_y\varphi^{n+1}_{i,j+1/2,k}\right]\Delta\xi\Delta t\\
        &+\sum_{n=0}^{N_T}\sum_{i,j,k}\gamma\left[\Delta\theta(B[c^{n+1}]_{i,j,k+1/2})^-\dd_\theta f^{n+1}_{i,j,k+1/2}\dd_\theta\varphi^{n+1}_{i,j,k+1/2}\right]\Delta\xi\Delta t.
    \end{align}
    Similarly, we write \cref{eq:error} as 
    \begin{equation}
        \epsilon(h)=\mathcal{T}(h)+\mathcal{D}(h)+\mathcal{P}(h)+\mathcal{U}(h),
    \end{equation}
    where 
    \begin{align}
        \mathcal{T}(h)=&-\int_\Sigma\left[\int_0^T f_h\partial_t \varphi \dd t+f_h(0,\xi)\varphi(0,\xi)\right]\dd\xi,\\
        \mathcal{D}(h)=&\quad \int_0^T\int_{\Sigma} [D_T\dd_\xx f_h\cdot\nabla_\xx\varphi+\dd_\theta f_h\partial_\theta\varphi]\dd\xi\dd t,\\
        \mathcal{P}(h)=&-\int_0^T\int_{\Sigma}\Pe f_h \mathbf{e}_h(\theta) \cdot\nabla_\xx\varphi\dd\xi\dd t,\\
        \mathcal{U}(h)=&-\int_0^T\int_{\Sigma}\gamma B[c_h]f_h\partial_\theta\varphi\dd\xi\dd t.
    \end{align}
We now compare the terms $\hat{\mathcal{T}},\hat{\mathcal{D}},\hat{\mathcal{P}}$ and $\hat{\mathcal{U}}$ with their continuum counterpart $\mathcal{T},\mathcal{D},\mathcal{P}$ and $\mathcal{U}$, respectively, and show that their difference converges to zero. We also show that $\hat{\mathcal{E}}(h)\to 0$, so that, in conclusion, $\epsilon(h)\to 0$.

Clearly, as in \cite[Proof of Theorem 2.1]{bailo2020convergence}, $\mathcal{T}(h)-\hat{\mathcal{T}}(h)=0$ for all $h$. Similarly, the term $\hat{\mathcal{E}}(h)$ goes to zero by using Cauchy-Schwarz, the uniform $L^2(\Sigma_T)$-bound on $\dd_\xi f_h$ and $B[c_h]$ and $\|\varphi\|_{C^2(\Sigma_T)}$. 

Indeed, using \cref{eq:defn-disc-test-fun}, we find
\begin{equation}
    \begin{split}
        \mathcal{T}(h)-\hat{\mathcal{T}}(h)=&\quad \sum_{i,j,k}\left(\sum_{n=1}^{N_T}f^n_{i,j,k}\left[\left(\varphi^{n+1}_{i,j,k}-\varphi^n_{i,j,k}\right)\Delta\xi-\int\limits_{C_{i,j,k}}\int\limits_{t^{n}}^{t^{n+1}}\partial_t \varphi\dd t\dd\xi\right]\right)\\&+\sum_{i,j,k}f^0_{i,j,k}\left[\varphi^1_{i,j,k}-\frac{1}{\Delta\xi}\int\limits_{C_{i,j,k}}\left(\int\limits_{t^0}^{t^1}\partial_t\varphi\dd t+\varphi(0,\xi)\right)\dd\xi\right]\Delta\xi\\
        =&0.
    \end{split}
\end{equation}
Next, let us address $\hat{\mathcal{E}}(h)$. Focusing on the term premultiplied by $\gamma$, and application of the Cauchy--Schwarz inequality, we have 
\begin{align}
    &\sum_{n=0}^{N_T} 
    \sum_{i,j,k} \gamma\left[\Delta\theta(B[c^{n+1}]_{i,j,k+1/2})^- \dd_\theta f^{n+1}_{i,j,k+1/2} \dd_\theta\varphi^{n+1}_{i,j,k+1/2}\right] \Delta\xi \Delta t\\
    &\quad\leq \Delta \theta \gamma \|\varphi\|_{C^1(\overline{\Sigma_T})}\|B[c_h]\|_{L^2(\Sigma_T)}\|\dd_\theta f_h\|_{L^2(\Sigma_T)}\\
    &\to 0.
\end{align}
The other terms in $\hat{\mathcal{E}}(h)$ go to zero by a similar argument.

Concerning the difference $\mathcal{D}(h)-\hat{\mathcal{D}}(h)$, we focus on the $x$-terms  noting that the terms corresponding to $y$ and $\theta$ are treated in the same manner. The $x$-terms in $\mathcal{D}(h)-\hat{\mathcal{D}}(h)$ are
\begin{equation}\label{eq:dh}
\begin{split}
    &D_T \left[\int_0^T\int_{\Sigma} \dd_x f_h\partial_x \varphi\dd\xi\dd t-\sum_{n=0}^{N_T}\sum_{i,j,k} \dd_x\varphi^{n+1}_{i+1/2,j,k}\dd_x f^{n+1}_{i+1/2,j,k}\Delta\xi\Delta t\right]\\
    =& D_T\sum_{n=1}^{N_T}\sum_{i,j,k}\left[\int_{t^{n-1}}^{t^{n}}\int_{C_{i+1/2,j,k}}\dd_x f_h\partial_x\varphi\dd\xi\dd t-\dd_x\varphi^{n}_{i+1/2,j,k}\dd_x f^{n}_{i+1/2,j,k}\Delta\xi\Delta t\right]\\
    =& D_T\sum_{n=1}^{N_T}\sum_{i,j,k}  \dd_x f^{n}_{i+1/2,j,k}\left[\int_{t^{n-1}}^{t^{n}}\int_{C_{i+1/2,j,k}} \partial_x\varphi\dd\xi\dd t-\dd_x\varphi^{n}_{i+1/2,j,k}\Delta\xi\Delta t\right]
    \end{split}
\end{equation}
where we use again $\varphi^{N+1}_{i,j,k}=0$. Furthermore, we can write the continuum integrals for each cell as 
\begin{equation}
\begin{split}
   &\int_{t^{n-1}}^{t^{n}}\int_{C_{i+1/2,j,k}}D_T\dd_x f_h\partial_x\varphi\dd\xi\dd t \\=& D_T\dd_x f^{n}_{i+1/2,j,k}\int\limits_{t^{n-1}}^{t^{n}}\int\limits_{y_{j-1/2}}^{y_{j+1/2}}\int\limits_{\theta_{k-1/2}}^{\theta_{k+1/2}}[\varphi(t,x_{i+1},y,\theta)-\varphi(t,x_{i},y,\theta)]\dd y\dd\theta\dd t.
\end{split}
\end{equation}
Hence, comparing using the mean value theorem, we get
\begin{equation}
    \label{eq:mvt}
    \begin{split}
        &\int\limits_{t^{n-1}}^{t^{n}}\int\limits_{y_{j-1/2}}^{y_{j+1/2}}\int\limits_{\theta_{k-1/2}}^{\theta_{k+1/2}}[\varphi(t,x_{i+1},y,\theta)-\varphi(t,x_{i},y,\theta)]\dd y\dd\theta\dd t-\dd_x\varphi^n_{i+1/2,j,k}\Delta\xi\Delta t\\
        =&\int\limits_{y_{j-1/2}}^{y_{j+1/2}}\int\limits_{\theta_{k-1/2}}^{\theta_{k+1/2}}\Big[\int\limits_{t^{n-1}}^{t^{n}}\frac{\varphi(t,x_{i+1},y,\theta)-\varphi(t,x_{i},y,\theta)}{\Delta x}\dd t \\& \quad\quad\quad\quad\quad\quad-\frac{\Delta t}{\Delta x}\int_{x_{i-1/2}}^{x_{i+1/2}}\frac{\varphi(t^n,\Delta x+s,y,\theta)-\varphi(t^n,s,y,\theta)}{\Delta x}\dd s\Big]\Delta x\dd y\dd\theta\\
        =&\int\limits_{y_{j-1/2}}^{y_{j+1/2}}\int\limits_{\theta_{k-1/2}}^{\theta_{k+1/2}}\left[\partial_x\varphi(\tilde{t},\tilde{x}, y, \theta) - \frac{\varphi(t^n,\Delta x+\tilde{s},y,\theta)-\varphi(t^n,\tilde{s},y,\theta)}{\Delta x}\right]\Delta t\Delta x\dd y\dd\theta\\
        =& \int\limits_{y_{j-1/2}}^{y_{j+1/2}}\int\limits_{\theta_{k-1/2}}^{\theta_{k+1/2}}\left[\partial_x\varphi(\tilde{t},\tilde{x}, y, \theta)- \partial_x\varphi(t^n,\hat{x},y,\theta)\right]\Delta t\Delta x\dd y\dd\theta.
    \end{split}
\end{equation}
Then, taking absolute value and using the multi-variate mean value theorem we obtain
\begin{equation}
    \label{eq:mvt2}
    \begin{split}
    &\int\limits_{y_{j-1/2}}^{y_{j+1/2}}\int\limits_{\theta_{k-1/2}}^{\theta_{k+1/2}}\left|\partial_x\varphi(\tilde{t},\tilde{x})- \partial_x\varphi(t^n,\hat{x},y,\theta)\right|\Delta t\Delta x\dd y\dd\theta\\
    \leq&\int\limits_{y_{j-1/2}}^{y_{j+1/2}}\int\limits_{\theta_{k-1/2}}^{\theta_{k+1/2}}\left(\|\partial_{tx}\varphi\|_{L^\infty(\Sigma_T)}+\|\partial_{x}^2\varphi\|_{L^\infty(\Sigma_T)}\right)(\Delta t+\Delta x)\Delta t\Delta x\dd y\dd\theta
    \\\leq& \ \|\varphi\|_{C^2(\Sigma_T)}(\Delta t+\Delta x)\Delta\xi\Delta t.
\end{split}
\end{equation}
Hence, using this estimate and the uniform bound on $\|\dd_\xi f_h\|_{L^2(\Sigma_T)}$ from \cref{cor:unifL2} in \cref{eq:dh}, we obtain $\mathcal{D}(h)-\hat{\mathcal{D}}(h)\to 0$. 

Then, for $\mathcal{P}(h)-\hat{\mathcal{P}}(h)$,  the $x$-terms are treated as follows. The $y$-term can be treated in the same manner. The $x$-terms in $\mathcal{P}(h)-\hat{\mathcal{P}}(h)$ are
\begin{equation}\label{eq:dP}
\begin{split}
    &-\int_0^T\int_{\Sigma}\Pe \cos_h(\theta) f_h\partial_x\varphi\dd\xi\dd t+\sum_{n=0}^{N_T}\sum_{i,j,k}\Pe\cos(\theta_k)f^{n+1}_{i,j,k}\dd_x\varphi^{n+1}_{i+1/2,j,k}\Delta\xi\Delta t\\
    =&-\Pe\sum_{n=1}^{N_T}\sum_{i,j,k}\left[\int\limits_{t^{n-1}}^{t^{n}}\int\limits_{C_{i+1/2,j,k}}\cos(\theta_k) f_h\partial_x\varphi\dd\xi\dd t-\cos(\theta_k)f^{n}_{i,j,k}\dd_x\varphi^{n}_{i+1/2,j,k}\Delta\xi\Delta t\right].
\end{split}
\end{equation}
We can split the integral over $C_{i+1/2,j,k}$ in two terms
\begin{equation}
\begin{split}
    I_h=&\int\limits_{C_{i+1/2,j,k}}\cos(\theta_k)f_h\partial_x\varphi\dd\xi\\=&\left[\int\limits_{C_{i,j,k}\cap C_{i+1/2,j,k}}\cos(\theta_k)f^n_{i,j,k}\partial_x\varphi\dd\xi+\int\limits_{C_{i+1/2,j,k}\cap C_{i+1,j,k}}\cos(\theta_k)f^n_{i+1,j,k}\partial_x\varphi\dd\xi\right],
\end{split}
\end{equation}
and putting it back together, we get
\begin{equation}
\begin{split}
    I_h=\int_{\theta_{k-1/2}}^{\theta_{k+1/2}}\int_{y_{j-1/2}}^{y_{j+1/2}}\cos(\theta_k)\Big[&\quad f^{n}_{i,j,k}\left(\varphi(t,x_{i+1/2},y,\theta)-\varphi(t,x_{i},y,\theta)\right)\\&+f^n_{i+1,j,k}\left(\varphi(t,x_{i+1},y,\theta)-\varphi(t,x_{i+1/2},y,\theta)\right)\Big]\dd y\dd\theta.
\end{split}
\end{equation}
Using these identities for \cref{eq:dP} and rewriting, while adding and subtracting the term 
$$
\sum_{n=1}^{N_T}\sum_{i,j,k}\int\limits_{t^{n-1}}^{t^{n}}\int\limits_{\theta_{k-1/2}}^{\theta_{k+1/2}}\int\limits_{y_{j-1/2}}^{y_{j+1/2}}\Pe\cos(\theta_k) f^n_{i,j,k}\left(\varphi(t,x_{i+1},y,\theta)-\varphi(t,x_{i+1/2},y,\theta)\right)\dd y\dd\theta \dd t,
$$
we get for the $x$-terms in $\mathcal{P}(h)-\hat{\mathcal{P}}(h)$
\begin{equation}
\begin{split}
    &\quad \sum_{n=1}^{N_T}\sum_{i,j,k}\Pe\cos(\theta_k)f^n_{i,j,k}\iiint_{C^n_{j,k}}-\left[\left(\varphi(t,x_{i+1/2},y,\theta)-\varphi(t,x_i,y,\theta)\right)-\left(\varphi^n_{i+1,j,k}-
    \varphi_{i,j,k}^n\right)\right]\dd y\dd\theta\dd t\\
    &+\sum_{n=1}^{N_T}\sum_{i,j,k}\Pe\cos(\theta_k)(-f^n_{i+1,j,k})\iiint_{C^n_{j,k}}\left(\varphi(t,x_{i+1},y,\theta)-\varphi(t,x_{i+1/2},y,\theta)\right)\dd\theta\dd y\dd t\\=&-\sum_{n=1}^{N_T}\sum_{i,j,k}\Pe\cos(\theta_k)f^n_{i,j,k}\iiint_{C^n_{j,k}}\left[\left(\varphi(t,x_{i+1},y,\theta)-\varphi(t,x_i,y,\theta)\right)-\left(\varphi^n_{i+1,j,k}-
    \varphi_{i,j,k}^n\right)\right]\dd y\dd\theta\dd t\\
    &-\sum_{n=1}^{N_T}\sum_{i,j,k}\Pe\cos(\theta_k)(f^n_{i+1,j,k}-f^n_{i,j,k})\iiint_{C^n_{j,k}}\left(\varphi(t,x_{i+1},y,\theta)-\varphi(t,x_{i+1/2},y,\theta)\right)\dd\theta\dd y\dd t\\
    =&\quad \mathcal{I}_\mathcal{P}^1+\mathcal{I}_\mathcal{P}^2,
\end{split} 
\end{equation}
where $C_{j,k}^n = (t^{n-1}, t^n) \times (y_{j-1/2}, y_{j+1/2})\times(\theta_{k-1/2}, \theta_{k+1/2})$.

The $\mathcal{I}_\mathcal{P}^1$ term converges to zero by using again the mean-value theorem and the finite-ness of the $C^2(\Sigma_T)$-norm of $\varphi$, as in \cref{eq:mvt,eq:mvt2}. 

The $\mathcal{I}_\mathcal{P}^2$ term can be upper bounded as, using the mean-value theorem and Cauchy-Schwarz,
\begin{equation}
\begin{split}
    \mathcal{I}_\mathcal{P}^2&\leq \|\varphi\|_{C^1(\Sigma_T)}\sum_{n=1}^{N_T}\sum_{i,j,k}\Pe\cos(\theta_k)(f^n_{i+1,j,k}-f^n_{i,j,k})\Delta\xi\Delta t\\
    &\leq C \|\varphi\|_{C^1(\Sigma_T)} \|\dd_x f_h\|_{L^2(\Sigma_T)} \Delta x,
\end{split}
\end{equation}
which goes to zero as $h\to 0$.

Finally, the convergence of $\mathcal{U}(h)-\hat{\mathcal{U}}(h)$ follows by a similar splitting argument as used for the $\mathcal{P}$-term. Indeed, by writing
\begin{equation}
    \begin{split}
        &\mathcal{U}(h)-\hat{\mathcal{U}}(h)\\
    =&\quad\sum_{n=1}^{N_T}\sum_{i,j,k}\gamma B[c^n]_{i,j,k+1/2}f^{n}_{i,j,k}\iiint_{C_{i,j}^n}\left[\left(\varphi(t,x,y,\theta_{k+1})-\varphi(t,x,y,\theta_k)\right)-\left(\varphi^n_{i,j,k+1}-\varphi^n_{i,j,k}
    \right)\right]\dd x\dd y\dd t\\
        &+\sum_{n=1}^{N_T}\sum_{i,j,k}\gamma B[c^n]_{i,j,k+1/2}(f^{n}_{i,j,k+1}-f_{i,j,k}^n)\iiint_{C_{i,j}^n}\left(\varphi(t,x,y,\theta_{k+1}) -\varphi(t,x,y,\theta_{k+1/2})\right)\dd x\dd y\dd t.
    \end{split}
\end{equation}
where $C_{i,j}^n = (t^{n-1}, t^n) \times (x_{i-1/2}, x_{i+1/2}) \times (y_{j-1/2}, y_{j+1/2})$. Using the uniform control on $\|B[c_h]\|_{L^2(\Sigma_T)}$, the terms also go to zero by the same argument as before. This concludes the proof.
\end{proof}

\section{Higher regularity estimates and long-time estimates}\label{sec:highreg}

In addition to the convergence result of \cref{sec:conv}, we establish a uniform-in-time estimate for the $L^2$ and $L^\infty$ norms of the discrete solution to the scheme  \eqref{eq:scheme}. For the $L^2$ norm, we can use the discrete analogue of the strategy used for the continuum problem in \cite{bruna2024lane} for $B\in\{B_0,B_\lambda, B_\tau\}$. The key is to convert the negative gradient terms into negative $L^2$ terms using the Gagliardo--Nirenberg inequality. In contrast, we require a different strategy for the $L^\infty$ norm since it is not clear what the analogue of the $W^{2,p}$-regularity theory for discrete finite volume functions is. Instead, we show a higher regularity result for the discrete $H^1$-norm of $\rho_h$, which allows us to use a discrete Morrey inequality \cite[Theorem 4.1]{porretta2020note} for the discrete gradient of $c_h$. We can cover the cases $B\in\{B_0,B_\lambda\}$.

\subsection{$L^\infty$ estimate for $\rho_h$ and the discrete Alikakos lemma}
\begin{lem}[Discrete Alikakos]\label{lem:discalik}
    Let $(F_k^n)_{n\in \mathbb N_0}$ be a family of nonnegative sequences for $k=1,2,\dots$, such that
    \begin{enumerate}
        \item[(i)] $F_1^n=1$, for all $n \in \mathbb N_0$,
        \item[(ii)] $\lim_{k\to+\infty} (F_k^0)^{1/(\lambda_k+1)}=F_\infty^0 < \infty$ exists,
        \item[(iii)] $(F^0_k)^{1/(\lambda_k+1)}\leq F^0_\infty$, for all $k=1, 2, \ldots$, 
    \end{enumerate}
    where $\lambda_k=2^k-1$. Moreover, assume that there are constants $C>0$ and $q,\beta\geq 1$, independent of $k$ such that, for $k=1,2,\dots$,
    \begin{equation}
    \label{eq:alikakos-coeffs}        
        a_k = C 2^k(2^k-1), \qquad \epsilon_k = 2^{-qk}, \qquad c_k = 2^{\beta k},
    \end{equation}
and 
    \begin{equation}
        \label{eq:discalik}
        \dfrac{1}{\Delta t}(F_k^{n+1}-F_k^{n})\leq -\epsilon_k F_k^{n+1}+(a_k+\epsilon_k)c_k\left(\sup_{n \in \mathbb N_0} F_{k-1}^n\right)^2,
    \end{equation}
   Then,  there exists a constant $A = A(\beta,q,C)\geq 1$ such that
    \begin{equation}
        \label{eq:discalik2}
        \left( \sup_{n \in \mathbb N_0} F_k^n \right)^{1/(\lambda_k+1)} \leq A K,
    \end{equation}
    for all $k=1,2,\dots$, where $K=\max\{1,F^0_\infty\}$. 
\end{lem}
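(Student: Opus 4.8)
The plan is to run a discrete Moser--Alikakos iteration: at each fixed level $k$ I would solve the time-stepping inequality \eqref{eq:discalik} explicitly to bound $\sup_n F_k^n$ in terms of the initial datum and the previous level, then take $(\lambda_k+1)$-th roots (recall $\lambda_k+1=2^k$) and propagate the resulting scalar bound through all levels by means of a convergent infinite product. First I would turn \eqref{eq:discalik} into a contraction. Writing $b_k=(a_k+\epsilon_k)c_k(\sup_n F_{k-1}^n)^2$ and $r=(1+\Delta t\,\epsilon_k)^{-1}\in(0,1)$, the inequality rearranges to $F_k^{n+1}\le r F_k^n+(1-r)b_k/\epsilon_k$, a convex-combination update toward the value $b_k/\epsilon_k$. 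A one-line induction on $n$ then gives $F_k^n\le\max\{F_k^0,\,b_k/\epsilon_k\}$ for every $n$, hence
$$\sup_{n}F_k^n \le \max\Big\{F_k^0,\ \tfrac{a_k+\epsilon_k}{\epsilon_k}\,c_k\,\big(\sup_n F_{k-1}^n\big)^2\Big\}.$$
I would deliberately keep the maximum here rather than summing the two contributions, for reasons explained below.

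Next I would insert the explicit coefficients \eqref{eq:alikakos-coeffs}. Setting $M_k:=\sup_n F_k^n$, one checks that $\tfrac{a_k+\epsilon_k}{\epsilon_k}c_k=(C\,2^k(2^k-1)2^{qk}+1)\,2^{\beta k}\le \tilde C\,2^{\gamma k}$ with $\tilde C=C+1$ and $\gamma=2+q+\beta$, while assumption (iii) together with $K=\max\{1,F_\infty^0\}$ (finite by (ii)) yields $F_k^0\le K^{2^k}$. Thus $M_k\le\max\{K^{2^k},\,\tilde C\,2^{\gamma k}M_{k-1}^2\}$. Taking $2^k$-th roots and writing $y_k:=M_k^{1/2^k}$, the homogeneity $M_{k-1}^{2/2^k}=y_{k-1}$ converts this into the scalar recursion $y_k\le\max\{K,\,\delta_k y_{k-1}\}$, where $\delta_k=\tilde C^{\,1/2^k}2^{\gamma k/2^k}\ge1$.

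To close the iteration I would exploit that the product $\prod_{i\ge1}\delta_i$ converges: since $\sum_i\log_2\delta_i=(\log_2\tilde C)\sum_i 2^{-i}+\gamma\sum_i i\,2^{-i}=\log_2\tilde C+2\gamma$ is finite (using $\sum_i 2^{-i}=1$ and $\sum_i i\,2^{-i}=2$), one gets $D:=\prod_i\delta_i=\tilde C\,4^{\gamma}$. Unrolling $y_k\le\max\{K,\delta_k y_{k-1}\}$ down to the base case $y_1=M_1^{1/2}=1$ supplied by assumption (i), every term is either $K$ or $y_1$ multiplied by a product of consecutive $\delta_i$'s, each bounded by $D$; as $y_1=1\le K$ this gives $y_k\le DK$ for all $k$. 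This is exactly \eqref{eq:discalik2} with $A=D=(C+1)4^{\,2+q+\beta}$, which depends only on $C,q,\beta$ and satisfies $A\ge1$.

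The main obstacle — and the reason the maximum in the first step is essential — is the propagation across levels. An additive bound $M_k\le F_k^0+\tilde C\,2^{\gamma k}M_{k-1}^2$ would, after rooting, produce $y_k\le K+\delta_k y_{k-1}$, whose unrolling is a sum $\sum_{j}\prod_{i=j}^{k}\delta_i$ of order $k$ terms each $\ge1$; this grows with $k$ and destroys the uniform bound. Retaining the maximum replaces that divergent sum by a single product bounded by $D$, which is what makes the constant $A$ independent of $k$. A final bookkeeping point I would verify is that the recursion \eqref{eq:discalik} is only invoked for $k\ge2$, so that the undefined level $F_0$ never appears and (i) genuinely furnishes the base case.
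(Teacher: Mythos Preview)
Your proof is correct and follows essentially the same strategy as the paper's: both rearrange \eqref{eq:discalik} into a convex-combination update, deduce by induction on $n$ that $\sup_n F_k^n\le\max\{F_k^0,\delta_k(\sup_n F_{k-1}^n)^2\}$ with $\delta_k=\tfrac{a_k+\epsilon_k}{\epsilon_k}c_k$, and then propagate this through the levels using the convergence of the product $\prod_k\delta_k^{1/2^k}$. The only cosmetic difference is that you take $2^k$-th roots first and unroll the scalar recursion $y_k\le\max\{K,\delta_k y_{k-1}\}$, whereas the paper unrolls at the level of $M_k$ and takes roots afterwards; both routes land on the same constant $A=(1+C)\,4^{\,2+q+\beta}$.
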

\begin{proof}
    The proof is a discrete analogue of the proof of \cite[Lemma 5.1]{kowalczyk2005preventing}, based on an induction argument over $k=1,2,\ldots$. First, by assumption, we have $\sup_{n\in \mathbb N_0} F^n_1\leq K$. 
    For the induction step, let us assume that for some $M \in \mathbb N$, we have
    $$
        \left( \sup_{n \in \mathbb N_0} F^n_{k} \right)^{1/(\lambda_k+1)}\leq A_k K, \quad \text{ for } k=1,\dots,M-1,
    $$
    where $A_k > 0$ is to be determined. We note that $A_1=1$. Now, we can rewrite \cref{eq:discalik} as 
    \begin{equation}
        F^{n+1}_k\leq \frac{F^n_k+ \Delta t \epsilon_k \delta_k\left(\sup_n F_{k-1}^n\right)^2}{1+\epsilon_k\Delta t},
    \end{equation}
    where we defined $\delta_k=\frac{a_k+\epsilon_k}{\epsilon_k}c_k$. By induction on $n$, this implies that 
    \begin{equation}
        \sup_{n \in \mathbb N_0} F^n_k\leq\max\left\{F^0_k,\delta_k\left(\sup_{n \in \mathbb N_0} F_{k-1}^n\right)^2\right\}.
    \end{equation}
    By assumption (iii) and the definition of $K$, we have $F^0_k\leq K^{\lambda_k+1}$. Therefore, we can conclude that
    \begin{align}
        \label{eq:alikakos-intermediate}
        \begin{split}
        \sup_{n\in \mathbb N_0} F^n_k
        &\leq \max\{K^{\lambda_k+1},\delta_k ((A_{k-1}K)^{\lambda_{k-1}+1})^2\} \leq \max\{K^{\lambda_k+1},\delta_k A_{k-1}^{\lambda_k+1}K^{\lambda_k+1}\}\\
        &\leq \left(\max\left\{1 ,\delta_k^{1/(\lambda_k+1)} A_{k-1}\right\} K\right)^{\lambda_k+1},
        \end{split}
    \end{align} 
    having used the definition of $\lambda_k$. Therefore, it follows that, for $k\geq 2$, 
    \begin{equation}\label{eq:Ak}
        A_k^{\lambda_k+1} = \delta_k \delta_{k-1}^2\dots(\delta_2)^{2^{k-2}}=\prod_{\ell=2}^k \delta_\ell^{2^{k-\ell}}.
    \end{equation}
    By the definition of $\delta_k$ and \cref{eq:alikakos-coeffs}, we also note that 
    $$
        \delta_k \leq (1+C)2^{(2+q+\beta)k},
    $$
    such that we can write for \cref{eq:Ak}
    $$
        A_k^{\lambda_k+1} \leq \prod_{\ell=2}^k(1+C)^{2^{k-\ell}}2^{(2+q+\beta)\ell 2^{k-\ell}} = \prod_{\ell=0}^{k-2} (1+C)^{2^{\ell}}2^{(2+q+\beta)(k-\ell) 2^{\ell}},
    $$ 
    such that
    \cref{eq:alikakos-intermediate} becomes,
    \begin{equation}
        \sup_{n \in \mathbb N_0} F^n_k\leq K^{\lambda_k+1}\prod_{\ell=0}^{k-2}(1+C)^{2^{\ell}}2^{(2+q+\beta)(k-\ell)2^{\ell}}.
    \end{equation}
    We now use the identities 
    $$
        \sum_{\ell=0}^{k-2} 2^\ell = 2^{k-1}-1, \qquad \text{and} \qquad \sum_{\ell=0}^{k-2} \ell 2^\ell = 2 + 2^{k-1}(k-3),
    $$
    to conclude that 
    \begin{equation}
    \begin{split}
        \sup_{n\in \mathbb N_0} F^n_k
        &\leq K^{2^k}(1+C)^{2^{k-1}-1}2^{(2+q+\beta)(3\cdot 2^{k-1}-k-2)}\leq K^{2^k}(1+C)^{2^k}2^{(2+q+\beta)2^{k+1}}.
        \end{split}
    \end{equation}
    Therefore 
    \begin{equation}
        \left( \sup_{n \in \mathbb N_0} F^n_k \right)^{1/(\lambda_k+1)}\leq (1+C) 2^{2(2+q+\beta)}K.
    \end{equation}
   Hence, we have obtained the inequality of \cref{eq:discalik2} with $A = ( 1 + C ) 2^{2(2+q+\lambda)}$.
\end{proof}
\begin{cor}
    \label{cor:1}
    Assume the initial data satisfies $\|\rho^0\|_{L^\infty}<+\infty$. Then, the piecewise-constant approximation of the spatial density of the solution of the numerical scheme, \cref{eq:scheme-rho}, satisfies
    \begin{equation}
        \sup_{t\geq 0} \|\rho_h(t)\|_{L^{\lambda_k+1}}\leq A(\beta,q)\max\{1,\|\rho^0\|_{L^\infty}\},
    \end{equation}
    for $\lambda_k=2^k-1$ and $k=1,2,\dots$, and for some $C,\beta,q>0$, independent of the mesh size.
\end{cor}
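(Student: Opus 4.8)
The plan is to apply the discrete Alikakos iteration of \cref{lem:discalik} to the $L^p$-norms of $\rho_h$ taken along the dyadic scale $p=2^{k-1}$. Concretely, I would set $F_k^n := \|\rho_h(t^n)\|_{L^{2^{k-1}}}^{2^{k-1}}$, so that the base level $F_1^n = \|\rho_h(t^n)\|_{L^1}$ is exactly the conserved mass, which equals one by \cref{prop:exist}; this gives hypothesis (i). Hypotheses (ii) and (iii) are then immediate from $|\Omega|=1$: on a probability space the $L^p$-norms increase to the $L^\infty$-norm, so $(F_k^0)^{1/(\lambda_k+1)} = \|\rho^0\|_{L^{2^{k-1}}}^{1/2}$ is nondecreasing in $k$, is bounded by $\|\rho^0\|_{L^\infty}^{1/2}<\infty$, and converges to it. Thus $F_\infty^0 = \|\rho^0\|_{L^\infty}^{1/2}$ and $K=\max\{1,\|\rho^0\|_{L^\infty}^{1/2}\}$.

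The heart of the argument is producing the recursive inequality \cref{eq:discalik}. I would test the density scheme \cref{eq:scheme-rho} at the implicit level against $(\rho^{n+1}_{i,j})^{p-1}$ (with $p=2^{k-1}$), multiply by $\Delta\xx$ and sum over $(i,j)$. On the left, the convexity inequality $(b-a)b^{p-1}\ge \tfrac1p(b^p-a^p)$ produces $\tfrac{1}{p\Delta t}(F_k^{n+1}-F_k^n)$. For the diffusive part, summation by parts (\cref{lem:ibp}) together with the elementary discrete chain-rule bound $(b-a)(b^{p-1}-a^{p-1})\ge \tfrac{4(p-1)}{p^2}(b^{p/2}-a^{p/2})^2$ converts the discrete Laplacian into a negative multiple of $\|\dd_\xx (\rho_h^{n+1})^{p/2}\|_{L^2(\Omega)}^2$. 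For the $\Pe$-drift I would follow \cref{prop:L2rho}, using $|\bar U^{n+1}_{i+1/2,j}|\le \max\{\rho^{n+1}_{i,j},\rho^{n+1}_{i+1,j}\}$ together with the mean-value estimate $|b^{p-1}-a^{p-1}|\le (p-1)\max\{a,b\}^{p-2}|b-a|$ and Young's inequality, splitting the resulting term into a small multiple of $\|\dd_\xx(\rho_h^{n+1})^{p/2}\|^2$ (absorbed into the diffusion) and a term of size $\sim p(p-1)\Pe^2\|(\rho_h^{n+1})^{p/2}\|_{L^2(\Omega)}^2$.

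At this point I would invoke the discrete Gagliardo--Nirenberg--Sobolev inequality \cite{bessemoulin2015discrete}, applied to $u_h=(\rho_h^{n+1})^{p/2}$, in the interpolation form $\|u_h\|_{L^2(\Omega)}^2\le \varepsilon\|\dd_\xx u_h\|_{L^2(\Omega)}^2 + C\varepsilon^{-1}\|u_h\|_{L^1(\Omega)}^2$ with a mesh-independent constant. Since $\|u_h\|_{L^1(\Omega)}=\|\rho_h(t^{n+1})\|_{L^{p/2}}^{p/2}=F_{k-1}^{n+1}$, choosing $\varepsilon$ small enough to absorb the gradient into the pooled diffusive dissipation, and then applying the same discrete GNS/Poincar\'e bound once more to turn the leftover negative gradient term into a negative multiple of $F_k^{n+1}$, leaves a production term controlled by $(\sup_n F_{k-1}^n)^2$. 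Tracking the powers of $p=2^{k-1}$ gives a production coefficient of order $p(p-1)\cdot p^2\sim 2^{4k}$, which fits the form $(a_k+\epsilon_k)c_k$ with $a_k=C2^k(2^k-1)$ and $c_k=2^{\beta k}$, $\beta=2$; the genuine dissipation coefficient is $O(1)$, which I would simply weaken to $\epsilon_k=2^{-qk}$ (legitimate since $F_k^{n+1}\ge 0$). This is precisely \cref{eq:discalik}, so \cref{lem:discalik} yields $(\sup_n F_k^n)^{1/(\lambda_k+1)}\le AK$, and after reindexing and squaring this becomes the claimed bound $\sup_{t\ge 0}\|\rho_h(t)\|_{L^{\lambda_k+1}}\le A(\beta,q)\max\{1,\|\rho^0\|_{L^\infty}\}$.

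The main obstacle, I expect, is the rigorous discrete realisation of the two nonlinear inequalities of the continuum Moser scheme -- the discrete chain rule $(b-a)(b^{p-1}-a^{p-1})\gtrsim (b^{p/2}-a^{p/2})^2$ with the sharp $4(p-1)/p^2$ constant, and the discrete GNS inequality applied to the nonlinear quantity $(\rho_h)^{p/2}$ -- with all constants uniform in the mesh and with their dependence on $p=2^{k-1}$ made explicit, since it is exactly this $k$-dependence that must match the prescribed forms of $a_k,c_k,\epsilon_k$ in \cref{lem:discalik}. A secondary point requiring care is that the estimate be genuinely autonomous (uniform in $T$): this is guaranteed because the base quantity $F_1\equiv 1$ is conserved for all time and each level is driven only by the supremum of the level below, so no Gr\"onwall growth enters, in contrast to \cref{cor:unifL2rho}.
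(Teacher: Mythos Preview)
Your plan is essentially the paper's proof: test the $\rho$-scheme against a power of $\rho^{n+1}$, use the discrete chain rule $(b-a)(b^{p-1}-a^{p-1})\ge\tfrac{4(p-1)}{p^2}(b^{p/2}-a^{p/2})^2$ for diffusion, $|\bar U|\le\max\rho$ plus Young for the drift, then the discrete Gagliardo--Nirenberg inequality of \cite{bessemoulin2015discrete} on $u=\rho_h^{p/2}$ to produce the recursion for \cref{lem:discalik}. The only cosmetic differences are your index shift (which in fact makes hypothesis~(i) of \cref{lem:discalik} cleaner) and the drift step, where the paper replaces your mean-value estimate by the single inequality $|b^{p}-a^{p}|\le\tfrac{2p}{p+1}\max\{a,b\}^{(p-1)/2}\,|b^{(p+1)/2}-a^{(p+1)/2}|$, landing directly on the product $\rho^{(p+1)/2}\cdot\dd_x\rho^{(p+1)/2}$ ready for Young; your route closes too once you insert the elementary observation $\max\{a,b\}^{s-1}|b-a|\le|b^{s}-a^{s}|$ for $s\ge1$.
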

\begin{proof}
    The idea of the proof is a discrete analogue of \cite[Proposition 3.5]{bruna2024lane}. We first multiply \cref{eq:scheme-rho} by $(p+1)(\rho_{i,j}^{n+1})^p\Delta\xx$ and sum over all $(i,j)$ to obtain
    \begin{equation}
    \label{eq:rhop1}
        \begin{split}
            (p+1)
            &\sum_{i,j}\frac{\Delta\xx}{\Delta t}(\rho^{n+1}_{i,j}-\rho^n_{i,j})(\rho^{n+1}_{i,j})^p\\
            &=(p+1)\sum_{i,j}D_T\Delta\xx \left(\dd_x^2\rho^{n+1}_{i,j} +\dd_y^2\rho^{n+1}_{i,j}\right)\left(\rho^{n+1}_{i,j}\right)^p\\
            &\qquad -(p+1)\sum_{i,j}\Pe\Delta\xx\left[\dd_x\bar{U}^{n+1}_{i,j}(\rho^{n+1}_{i,j})^p+\dd_y\bar{V}_{i,j}^{n+1}(\rho^{n+1}_{i,j})^p\right]\\
            &=:I_{D_T}+I_{\Pe}.
        \end{split}
    \end{equation}
    By summation by parts, we have 
    \begin{equation}
        I_{\Pe}=\Pe(p+1)\sum_{i,j}\Delta\xx\left[\bar{U}^{n+1}_{i+1/2,j}\dd_x(\rho^{n+1}_{i+1/2,j})^p+\bar{V}^{n+1}_{i,j+1/2}\dd_y(\rho^{n+1}_{i,j+1/2})^p\right].
    \end{equation}
    Using the elementary inequality 
    $$
        |b-a|\leq \frac{2p}{p+1} \max\left\{b^{\frac{p-1}{2p}},a^{\frac{p-1}{2p}}\right\} \left|b^{\frac{p+1}{2p}}-a^{\frac{p+1}{2p}}\right|,
    $$ 
    for $a,b\in\mathbb{R}_{\geq 0}$, with $b=(\rho^{n+1}_{i+1,j})^p,a=(\rho^{n+1}_{i,j})^p$ in conjunction with  $|\bar{U}^{n+1}_{i+1/2,j}|\leq\max\{\rho^{n+1}_{i,j},\rho^{n+1}_{i+1,j}\}$, we obtain
    $$
        \left|\bar{U}^{n+1}_{i+1/2,j}\dd_x(\rho^{n+1}_{i+1/2,j})^p\right|\leq\frac{2p}{p+1} \max\{\rho^{n+1}_{i,j},\rho^{n+1}_{i+1,j}\}^{\frac{p+1}{2}}\left|\dd_x (\rho^{n+1}_{i+1/2,j})^{\frac{p+1}{2}}\right|.
    $$
    Similarly, we get
    $$
        \left|\bar{V}^{n+1}_{i,j+1/2}\dd_y(\rho^{n+1}_{i,j+1/2})^p\right|\leq\frac{2p}{p+1} \max\{\rho^{n+1}_{i,j},\rho^{n+1}_{i,j+1}\}^{\frac{p+1}{2}}\left|\dd_y (\rho^{n+1}_{i,j+1/2})^{\frac{p+1}{2}}\right|.
    $$
    Returning to \cref{eq:rhop1}, we get for the $x$-part of $I_\Pe$,
    denoted by $I_{\Pe,x}$,
    \begin{equation}
    \begin{split}
        I_{\Pe,x}\leq& \Pe(p+1)\sum_{i,j}\frac{2p}{p+1} \max\{\rho^{n+1}_{i,j},\rho^{n+1}_{i+1,j}\}^{\frac{p+1}{2}}|\dd_x (\rho^{n+1}_{i+1/2,j})^{\frac{p+1}{2}}|\Delta\xx.
    \end{split}
    \end{equation}
    Now, using Young--H\"older we have
\begin{multline*}
	        \max\left\{\rho^{n+1}_{i,j},\rho^{n+1}_{i+1,j}\right\}^{\frac{p+1}{2}}\frac{2}{p+1} \left|\dd_x (\rho^{n+1}_{i+1/2,j})^{\frac{p+1}{2}}\right| \\ \leq \frac{1}{2\varepsilon} \max\left\{\rho^{n+1}_{i,j},\rho^{n+1}_{i+1,j}\right\}^{p+1} + \frac{4\Pe^2}{(p+1)^2}\frac{\varepsilon}{2} \left|\dd_x (\rho^{n+1}_{i+1/2,j})^{\frac{p+1}{2}}\right|^2.
\end{multline*}
    Applying the above inequalities analogously to $I_{\Pe,y}$, and using $(p+1)b^p(b-a)\geq (b^{p+1}-a^{p+1})$ for the time derivative,
    \begin{equation}
    \begin{split}
        \dfrac{1}{\Delta t}
        &\left( \|\rho_h(t^{n+1})\|_{L^{p+1}}^{p+1}-\|\rho_h(t^{n})\|_{L^{p+1}}^{p+1} \right)\\
        &\leq -(p+1)D_T\sum_{i,j}\Delta\xx \left (\dd_x\rho^{n+1}_{i+1/2,j}\dd_x(\rho^{n+1}_{i+1/2,j})^p+\dd_y\rho^{n+1}_{i,j+1/2}\dd_y(\rho^{n+1}_{i,j+1/2})^p\right )\\
        &\quad +p(p+1)\sum_{i,j}\Delta\xx\left (\tfrac{1}{2\varepsilon}\max\{\rho^{n+1}_{i,j},\rho^{n+1}_{i+1,j}\}^{p+1}+\frac{4\Pe^2\varepsilon}{2(p+1)^2}\left|\dd_x (\rho^{n+1}_{i+1/2,j})^{\frac{p+1}{2}}\right|^2\right) \\
        &\quad +p(p+1)\sum_{i,j}\Delta\xx\left(\tfrac{1}{2\varepsilon}\max\{\rho^{n+1}_{i,j},\rho^{n+1}_{i,j+1}\}^{p+1}+\frac{4\Pe^2\varepsilon}{2(p+1)^2}\left|\dd_y (\rho^{n+1}_{i,j+1/2})^{\frac{p+1}{2}}\right|^2\right).
    \end{split}
    \end{equation}
    Using the elementary inequality $-(p+1)(a-b)(a^p-b^p)\leq-\frac{4p}{p+1}(a^{\frac{p+1}{2}}-b^{\frac{p+1}{2}})^2$, the first sum can be bounded as
    \begin{equation}
        \label{ineq:p12}
        I_{D_T}
        \leq -\frac{4p D_T}{p+1}
        \left( \left\|\dd_x(\rho_h(t^{n+1}))^{\frac{p+1}{2}}\right\|_{L^2}^2 + \left\|\dd_y(\rho_h(t^{n+1}))^{\frac{p+1}{2}}\right\|_{L^2}^2\right).
    \end{equation}
    Combining the above and noting $\max\{a,b\}^p\leq a^p+b^p$ for $a,b\in\mathbb{R}_{\geq 0},p\geq 1$, we can write
        \begin{multline}
            \frac{1}{\Delta t}
            \left(\|\rho_h(t^{n+1})\|_{L^{p+1}}^{p+1}-\|\rho_h(t^{n})\|_{L^{p+1}}^{p+1}\right)\\
            \leq -\frac{4p}{p+1}(D_T-\tfrac{\varepsilon}{2}\Pe^2)\|\dd_\xx(\rho_h)^{\frac{p+1}{2}}\|_{L^2}^2+\tfrac{2}{\varepsilon}p(p+1)\|\rho_h(t^{n+1})\|_{L^{p+1}}^{p+1}.
                \end{multline}
    We now use the discrete Gagliardo--Nirenberg inequality \cite[Theorem 3.4]{bessemoulin2015discrete} so that, for $u=\rho_h^{\frac{p+1}{2}}$,
    \begin{equation}
        \|u\|_{L^2}^2\leq C_{GN}\|u\|_{L^1}\|u\|_{1,2}.
    \end{equation}
    After some algebra (see \cref{app:alikalgebra}) for constants $a_k,\epsilon_k$ and $c_k$ as in \cref{lem:discalik}, we obtain
    \begin{equation}\label{eq:AlikRhoAlgebra}
    \begin{split}
        \frac{1}{\Delta t} \left(\|\rho_h(t^{n+1})\|_{L^{p+1}}^{p+1}-\|\rho_h(t^{n})\|_{L^{p+1}}^{p+1}\right) \leq-\epsilon_k\|\rho_h(t^{n+1})\|_{L^{p+1}}^{p+1}+(a_k+\epsilon_k) c_k\left(\|\rho_h(t^{n+1})\|_{L^{\frac{p+1}{2}}}^{\frac{p+1}{2}}\right)^2.
    \end{split}
    \end{equation}
    Therefore, by inserting the notation $\lambda_k=p=2^k-1$ we get the result by \cref{lem:discalik}.
\end{proof}
\subsection{$L^\infty$ estimate for $f_h$}
\begin{prop}[Uniform $L^2$-estimate for $f_h$]
\label{prop:L2unif}
    Assume the conditions in \cref{cor:1} are met. Then, any solution to the numerical scheme as in \cref{eq:scheme} satisfies
    \begin{equation}
        \sup_{t\geq 0}\|f_h(t)\|_{L^2}< C,
    \end{equation}
    for some constant $C>0$ that does not depend on the mesh size.
\end{prop}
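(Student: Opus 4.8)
The plan is to run a discrete, uniform-in-time analogue of the continuum argument of \cite[Proposition 3.5]{bruna2024lane}: turn the dissipative gradient term into a superquadratic sink in $\|f_h\|_{L^2}^2$, and then exploit the implicit (backward Euler) structure of the scheme to trap the solution below a mesh-independent threshold. First I would start from the per-timestep energy inequality \eqref{eq:intermL2-3} derived in the proof of \cref{cor:unifL2}, which holds for every $n\ge 0$,
\begin{equation*}
\frac{1}{2\Delta t}\left(\|f_h(t^{n+1})\|_{L^2}^2-\|f_h(t^{n})\|_{L^2}^2\right)\leq -D_T\|\dd_\xx f_h(t^{n+1})\|_{L^2}^2-\|\dd_\theta f_h(t^{n+1})\|_{L^2}^2+\frac{\gamma}{2}\int_\Sigma\left|\dd_\theta B[c_h(t^{n+1})]\right|\,|f_h(t^{n+1})|^2\dd\xi.
\end{equation*}
The point is that, unlike in \cref{cor:unifL2}, I do \emph{not} apply Gr\"onwall; instead I retain the full dissipation, bounded below by $-D'\|\dd_\xi f_h\|_{L^2}^2$ with $D'=\min\{D_T,1\}$. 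Using Cauchy--Schwarz, \cref{cor:disccreg}, and the \textbf{uniform-in-time} bound $\sup_{t\ge 0}\|\rho_h(t)\|_{L^2(\Omega)}\le C$ supplied by \cref{cor:1} with $k=1$, the interaction term is controlled by $\tfrac{\gamma}{2}\|\dd_\theta B[c_h]\|_{L^2}\|f_h\|_{L^4}^2\le C\|f_h\|_{L^4}^2$, with $C$ independent of $h$ and of $n$.

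Next I would interpolate the $L^4$ norm. Writing $\|f_h\|_{1,2}=\|f_h\|_{L^2}+\|\dd_\xi f_h\|_{L^2}$, the discrete Sobolev embedding $\|f_h\|_{L^6}\le C_{PS}\|f_h\|_{1,2}$ of \cite[Theorem 3.2]{bessemoulin2015discrete} together with the interpolation $\|f_h\|_{L^4}\le\|f_h\|_{L^2}^{1/4}\|f_h\|_{L^6}^{3/4}$ yields, in the three-dimensional domain $\Sigma$,
\begin{equation*}
\|f_h\|_{L^4}^2\le C\,\|f_h\|_{L^2}^{1/2}\|f_h\|_{1,2}^{3/2}\le C\left(\|f_h\|_{L^2}^2+\|f_h\|_{L^2}^{1/2}\|\dd_\xi f_h\|_{L^2}^{3/2}\right).
\end{equation*}
A weighted Young inequality with exponents $\tfrac43,4$ then gives $C\|f_h\|_{L^4}^2\le\tfrac{D'}{2}\|\dd_\xi f_h\|_{L^2}^2+C''\|f_h\|_{L^2}^2$, absorbing half of the dissipation and leaving
\begin{equation*}
\frac{1}{2\Delta t}\left(\|f_h(t^{n+1})\|_{L^2}^2-\|f_h(t^{n})\|_{L^2}^2\right)\leq -\frac{D'}{2}\|\dd_\xi f_h(t^{n+1})\|_{L^2}^2+C''\|f_h(t^{n+1})\|_{L^2}^2.
\end{equation*}

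The crucial step is to convert the remaining dissipation into a negative quartic term in $\|f_h\|_{L^2}^2$. Here I would use mass conservation $\|f_h\|_{L^1}=\|f^0\|_{L^1}=1$ (\cref{prop:exist}) and the discrete Gagliardo--Nirenberg inequality $\|f_h\|_{L^2}^2\le C_{GN}\|f_h\|_{L^1}\|f_h\|_{1,2}$ of \cite[Theorem 3.4]{bessemoulin2015discrete} (the same one used in \cref{cor:1}), which gives $\|\dd_\xi f_h\|_{L^2}\ge C_{GN}^{-1}\|f_h\|_{L^2}^2-\|f_h\|_{L^2}$; distinguishing the regimes $\|f_h\|_{L^2}\ge 2C_{GN}$ and $\|f_h\|_{L^2}<2C_{GN}$ then produces an estimate $\|\dd_\xi f_h\|_{L^2}^2\ge c_1\|f_h\|_{L^2}^4-c_2$ valid for all values, with $c_1,c_2>0$ independent of $h$. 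Writing $Y^n=\|f_h(t^n)\|_{L^2}^2$, the inequality becomes $Y^{n+1}-Y^n\le 2\Delta t\,g(Y^{n+1})$ with $g(Y)=-\tfrac{D'c_1}{2}Y^2+C''Y+\tfrac{D'c_2}{2}$, a downward parabola that is negative for $Y$ above its larger root $R^2$. The implicit structure then closes the argument by a trapping induction: if $Y^{n+1}>R^2$ then $g(Y^{n+1})<0$ forces $Y^{n+1}<Y^n$, so $Y^{n+1}\le\max\{Y^n,R^2\}$, and inductively $\sup_n Y^n\le\max\{Y^0,R^2\}=\max\{\|f^0\|_{L^2}^2,R^2\}$, which is exactly the claimed mesh-independent bound.

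I expect the main obstacle to be the bookkeeping that makes the two uses of the dissipation compatible --- one fraction absorbing the $L^4$/Gagliardo--Nirenberg term and the other generating the quartic sink --- and, above all, the justification of the discrete Gagliardo--Nirenberg and Sobolev inequalities on the three-dimensional periodic mesh with constants uniform in $h$, for which the mesh-ratio hypotheses of \cref{cor:disccreg} and the results of \cite{bessemoulin2015discrete} must be invoked. By contrast, the trapping step is a clean consequence of the backward Euler discretisation and requires no additional restriction on $\Delta t$.
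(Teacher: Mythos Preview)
Your strategy is sound and closes, but there is one exponent error. The Gagliardo--Nirenberg inequality $\|f_h\|_{L^2}^2\le C_{GN}\|f_h\|_{L^1}\|f_h\|_{1,2}$ you invoke is the two-dimensional one used in \cref{cor:1} for $\rho_h$ on $\Omega=\mathbb T_1^2$; it fails on the three-dimensional domain $\Sigma$ where $f_h$ lives. In 3D the correct inequality from \cite[Theorem 3.4]{bessemoulin2015discrete} is $\|f_h\|_{L^2}\le C_{GN}\|f_h\|_{L^1}^{2/5}\|f_h\|_{1,2}^{3/5}$, which with $\|f_h\|_{L^1}=1$ gives $\|\dd_\xi f_h\|_{L^2}^2\ge c_1\|f_h\|_{L^2}^{10/3}-c_2$ rather than a quartic. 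Your trapping function then becomes $g(Y)=-c_1'Y^{5/3}+C''Y+c_2'$, but since $5/3>1$ the argument goes through verbatim.

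With that fix your proof is correct, and it takes a genuinely different route from the paper's. You interpolate $\|f_h\|_{L^4}$ between $L^2$ and $L^6$, which leaves a residual $C''\|f_h\|_{L^2}^2$; you then compensate by extracting a \emph{superlinear} sink in $Y=\|f_h\|_{L^2}^2$ from the remaining dissipation via Gagliardo--Nirenberg, and close with a nonlinear implicit trapping argument. The paper instead interpolates $\|f_h\|_{L^4}$ between $L^1$ and $L^6$, exploiting mass conservation immediately to obtain $\int|\dd_\theta B[c_h]|\,|f_h|^2\le C\|f_h\|_{L^6}^{9/5}\le C\|f_h\|_{1,2}^{9/5}$; since $9/5<2$ this is absorbed directly into $-\kappa\|f_h\|_{1,2}^2$ by Young, yielding the \emph{linear} recursion $\tfrac{1}{\Delta t}(Y^{n+1}-Y^n)\le-\tfrac{\kappa}{2}Y^{n+1}+C(\kappa)$. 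The paper's route is shorter and bypasses the nonlinear ODE step; yours would still close in situations where the interaction term produced a contribution linear in $Y$ that could not be made small, provided the dissipation dominates at large $Y$.
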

\begin{proof}
    The idea is to revisit \cref{eq:intermL2-3}, i.e.,
        \begin{equation}
    \label{ineq:GNfL2}
        \begin{split}
            \frac{1}{2\Delta t} \left(\|f_h(t^{n+1})\|_{L^2}^2-\|f_h(t^{n})\|_{L^2}^2 \right)\leq-D_T\|\dd_\xx f_h(t^{n+1})\|_{L^2}^2-\|\dd_\theta f_h(t^{n+1})\|_{L^2}^2\\
            +\frac{\gamma}{2}\int |\dd_\theta B[c_h(t^{n+1})]|\, |f_h(t^{n+1})|^2\dd\xi,
        \end{split}
    \end{equation}
    and use an improved version of the estimate of \cref{eq:BcL2} to estimate the last term. Applying the discrete Gagliardo--Nirenberg inequality \cite[Theorem 3.4]{bessemoulin2015discrete} 
    \begin{equation}
        -\|\dd_\xi f_h\|_{L^2}^2\leq -\|f_h\|_{L^2} + C \|f_h\|_{L^1}^{\frac{4}{7}} = C -\|f_h\|_{L^2},
    \end{equation}
    for some constant $C>0$, to the first two terms, \cref{ineq:GNfL2} becomes
    \begin{equation}
    \label{ineq:GNfL2-smplfd}
        \begin{split}
            \frac{1}{2\Delta t}(\|f_h(t^{n+1})\|_{L^2}^2-\|f_h(t^{n})\|_{L^2}^2)
            &\leq C - \kappa \|f_h(t^{n+1})\|_{1,2}^2\\
            &\quad +\frac{\gamma}{2}\int |\dd_\theta B[c_h(t^{n+1})]|\, |f_h(t^{n+1})|^2\dd\xi,
        \end{split}
    \end{equation}
    where $\kappa ={\min(1, D_T)}/{2}$. Next, using the H\"older inequality and then an interpolation of Lebesgue spaces, we observe
    \begin{align}
    \begin{aligned}
        \frac{\gamma}{2}\int|\dd_\theta B[c_h]|\, |f_h(t^{n+1})|^2\dd\xi
        &\leq C\|\dd_\theta B[c_h]\|_{L^2}\|f_h(t^{n+1})\|_{L^4}^2\\ 
        &\leq C \|f_h(t^{n+1})\|_{L^1}^{\frac{2}{10}}\|f_h(t^{n+1})\|_{L^6}^{\frac{18}{10}} \\
        &\leq C \|f_h(t^{n+1})\|_{L^6}^{\frac{9}{5}},
            \end{aligned}
    \end{align}
    having also used mass conservation, \cref{cor:disccreg} and \cref{cor:1}. Here, the constant $C$ changes from line to line but remains independent of the mesh size.
    Then, \cref{ineq:GNfL2-smplfd} becomes
    \begin{equation}
    \label{ineq:GNfL2-smplfd1}
        \begin{split}
            \frac{1}{2\Delta t}(\|f_h(t^{n+1})\|_{L^2}^2-\|f_h(t^{n})\|_{L^2}^2)
            &\leq C - \kappa \|f_h(t^{n+1})\|_{1,2}^2 + C \|f_h\|_{L^6}^{\frac{9}{5}}.
        \end{split}
    \end{equation}
    Next, we can apply the discrete Poincar\'e--Sobolev inequality 
    \begin{align}
        \|f_h\|_{L^6} \leq C_{PS} \|f_h\|_{1,2}
    \end{align}
    cf. \cite[Theorem 3.2]{bessemoulin2015discrete}, to estimate the $L^6$-norm in \cref{ineq:GNfL2-smplfd1}. This way, we obtain
    \begin{equation}
        \frac{1}{\Delta t}(\|f_h(t^{n+1})\|_{L^2}^2-\|f_h(t^{n})\|_{L^2}^2)\leq -\frac\kappa2\|f_h(t^{n+1})\|_{L^2}^2 + C(\kappa),
    \end{equation}
    for positive constant $C(\kappa)>0$ that does not depend on the mesh size. Hence, we find 
    \begin{equation}
        \sup_{n\in \mathbb N_0} \|f_h(t^{n})\|_{L^2}^2\leq C,
    \end{equation}
    where $C>0$ only depends on $\alpha$ and $\|f_0\|_{L^2}$ but is independent of the mesh size. Then, the result follows.
\end{proof}
\begin{prop}[$L^2$-estimate for $\dd_\xx\rho_h$]\label{prop:rhoh1}
Given  $\|\nabla_\xx \rho^0\|_{L^2(\Omega)}<+\infty$ and the same assumptions of \cref{prop:L2unif}, the solution to the numerical scheme \eqref{eq:scheme} satisfies
\begin{equation}
    \sup_{t\geq 0}\|\dd_\xx\rho_h(t)\|_{L^2(\Omega)}\leq C,
\end{equation}
where $C>0$ is independent of the mesh size.
\end{prop}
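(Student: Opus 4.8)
The plan is to prove a discrete counterpart of the parabolic $H^1$-regularity estimate for the density equation, testing \eqref{eq:scheme-rho} against the discrete Laplacian rather than against $\rho_h$ as in \cref{prop:L2rho}. Concretely, I would multiply \eqref{eq:scheme-rho} by $-\left(\dd_x^2+\dd_y^2\right)\rho^{n+1}_{i,j}\,\Delta\xx$ and sum over $(i,j)$ (here the diffusive flux in \eqref{eq:fluxes-rho} is understood at the implicit level $n+1$, consistently with the proof of \cref{prop:L2rho}). Using the summation-by-parts identity of \cref{lem:ibp}, the time-difference term becomes $\tfrac{1}{\Delta t}\sum\dd_\xx\rho^{n+1}\cdot(\dd_\xx\rho^{n+1}-\dd_\xx\rho^n)\Delta\xx$, which by $b(b-a)\ge\tfrac12(b^2-a^2)$ is bounded below by $\tfrac{1}{2\Delta t}(\|\dd_\xx\rho_h(t^{n+1})\|_{L^2(\Omega)}^2-\|\dd_\xx\rho_h(t^n)\|_{L^2(\Omega)}^2)$. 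The diffusive part produces the good dissipation $-D_T\|(\dd_x^2+\dd_y^2)\rho_h(t^{n+1})\|_{L^2(\Omega)}^2$, while the $\Pe$-drift yields $\Pe\sum_{i,j}\big(\mathrm{div}_h\bar{\mathbf U}^{n+1}_{i,j}\big)\big(\dd_x^2+\dd_y^2\big)\rho^{n+1}_{i,j}\,\Delta\xx$, where $\mathrm{div}_h\bar{\mathbf U}$ is the discrete divergence of the upwind fluxes $\bar U,\bar V$ of \eqref{eq:fluxes-rho}.

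To estimate this drift I would exploit the structure of the upwind fluxes. Writing out $\tfrac1{\Delta x}(\bar U^{n+1}_{i+1/2,j}-\bar U^{n+1}_{i-1/2,j})=\sum_k\big[(\cos\theta_k)^+\dd_x f^{n+1}_{i-1/2,j,k}+(\cos\theta_k)^-\dd_x f^{n+1}_{i+1/2,j,k}\big]\Delta\theta$, and analogously in $y$, a Cauchy--Schwarz inequality in the $k$-sum (using $|\cos\theta_k|,|\sin\theta_k|\le 1$) gives $\|\mathrm{div}_h\bar{\mathbf U}^{n+1}\|_{L^2(\Omega)}^2\le C\|\dd_\xx f_h(t^{n+1})\|_{L^2}^2$. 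A weighted Young inequality then absorbs half of the dissipation, leaving
\[
\tfrac{1}{2\Delta t}\big(\|\dd_\xx\rho_h(t^{n+1})\|_{L^2(\Omega)}^2-\|\dd_\xx\rho_h(t^n)\|_{L^2(\Omega)}^2\big)\le -\tfrac{D_T}{2}\|(\dd_x^2+\dd_y^2)\rho_h(t^{n+1})\|_{L^2(\Omega)}^2+C_1\|\dd_\xx f_h(t^{n+1})\|_{L^2}^2 .
\]

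The main obstacle is that the right-hand side still carries $\|\dd_\xx f_h\|_{L^2}^2$, for which only the time-integrated, $T$-dependent bound of \cref{cor:unifL2} is available; a naive summation in time would therefore give only linear-in-$T$ growth, not a uniform bound. To overcome this I would \emph{not} discard the diffusive dissipation of the $f$-equation. Returning to \eqref{eq:intermL2-3} and bounding its interaction term exactly as in \cref{prop:L2unif} (via \cref{cor:disccreg}, \cref{cor:1}, and the discrete Poincar\'e--Sobolev inequality, followed by Young with a small parameter so that only an arbitrarily small fraction of the dissipation is consumed), one keeps a definite fraction of the full gradient dissipation and obtains, with $c_0>0$ and $C_0$ independent of mesh and time,
\[
\tfrac{1}{2\Delta t}\big(\|f_h(t^{n+1})\|_{L^2}^2-\|f_h(t^n)\|_{L^2}^2\big)\le -c_0\|\dd_\xi f_h(t^{n+1})\|_{L^2}^2+C_0 .
\]
I would then form the combined functional $\Phi^n=\|\dd_\xx\rho_h(t^n)\|_{L^2(\Omega)}^2+\Lambda\|f_h(t^n)\|_{L^2}^2$ and choose $\Lambda\ge C_1/c_0$, so that the bad term $C_1\|\dd_\xx f_h\|_{L^2}^2\le C_1\|\dd_\xi f_h\|_{L^2}^2$ is absorbed by $-\Lambda c_0\|\dd_\xi f_h\|_{L^2}^2$, leaving $\tfrac{1}{2\Delta t}(\Phi^{n+1}-\Phi^n)\le -\tfrac{D_T}{2}\|(\dd_x^2+\dd_y^2)\rho_h(t^{n+1})\|_{L^2(\Omega)}^2+\Lambda C_0$.

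Finally I would convert the dissipation into damping. Since the discrete Laplacian has zero mean, summation by parts gives $\|\dd_\xx\rho_h\|_{L^2(\Omega)}^2=-\sum_{i,j}(\rho_h-\bar\rho_h)(\dd_x^2+\dd_y^2)\rho_h\,\Delta\xx\le\|\rho_h-\bar\rho_h\|_{L^2(\Omega)}\|(\dd_x^2+\dd_y^2)\rho_h\|_{L^2(\Omega)}$, and the discrete Poincar\'e--Wirtinger inequality on the periodic mesh of $\Omega$ (with constant $C_P$ independent of the mesh, cf.\ \cite{bessemoulin2015discrete}) then yields $\|(\dd_x^2+\dd_y^2)\rho_h(t^{n+1})\|_{L^2(\Omega)}^2\ge C_P^{-2}\|\dd_\xx\rho_h(t^{n+1})\|_{L^2(\Omega)}^2$. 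Using $\|f_h\|_{L^2}^2\le C$ uniformly from \cref{prop:L2unif} to write $\|\dd_\xx\rho_h(t^{n+1})\|_{L^2(\Omega)}^2\ge\Phi^{n+1}-\Lambda C$, the inequality closes as $\tfrac{1}{2\Delta t}(\Phi^{n+1}-\Phi^n)\le-\mu\Phi^{n+1}+C_2$ with $\mu=D_T/(2C_P^2)>0$. This damped discrete relation rearranges to $\Phi^{n+1}\le(1+2\mu\Delta t)^{-1}(\Phi^n+2\Delta t\,C_2)$, which iterates to $\sup_n\Phi^n\le\Phi^0+C_2/\mu$; as $\Phi^0$ is finite under the hypotheses $\|\nabla_\xx\rho^0\|_{L^2(\Omega)}<+\infty$ and $\|f^0\|_{L^2}<+\infty$, the claimed uniform-in-time bound on $\|\dd_\xx\rho_h\|_{L^2(\Omega)}$ follows.
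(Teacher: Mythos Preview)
Your argument is correct but follows a genuinely different route from the paper. Both proofs start from essentially the same basic estimate: testing \eqref{eq:scheme-rho} against $-\Delta_h\rho^{n+1}$ (equivalently, the paper moves the Laplacian to the left, squares, and drops the $|\partial_t^h\rho|^2$ term) yields
\[
\tfrac{1}{\Delta t}\big(\|\dd_\xx\rho_h(t^{n+1})\|_{L^2(\Omega)}^2-\|\dd_\xx\rho_h(t^n)\|_{L^2(\Omega)}^2\big)\;\le\; C\,\|\dd_\xx f_h(t^{n+1})\|_{L^2}^2,
\]
with your version retaining the extra dissipation $-\tfrac{D_T}{2}\|(\dd_x^2+\dd_y^2)\rho_h(t^{n+1})\|_{L^2(\Omega)}^2$. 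From this point the approaches diverge. The paper discards that dissipation and instead sums on unit-length time windows $[t^m,t^M]$, invokes the pre-Gr\"onwall inequality \eqref{ineq:L2pregronwall} together with the uniform bounds on $\|\dd_\theta B[c_h]\|_{L^2}$ and $\|f_h\|_{L^2}$ from \cref{prop:L2unif} to obtain $\sum_{n=m}^M\Delta t\,\|\dd_\xi f_h(t^{n+1})\|_{L^2}^2\le C$, and then averages the resulting inequality in the intermediate index $s\in[m,M]$ so that the unwanted term $\|\dd_\xx\rho_h(t^s)\|^2$ becomes a time integral bounded in the same way via $\|\dd_\xx\rho_h\|\le C\|\dd_\xx f_h\|$. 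Your Lyapunov-functional route is more direct: you re-run the $L^2$-estimate for $f_h$ as in \cref{prop:L2unif} but keep a definite fraction $c_0$ of the gradient dissipation, form $\Phi^n=\|\dd_\xx\rho_h(t^n)\|^2+\Lambda\|f_h(t^n)\|^2$ with $\Lambda$ large enough to absorb $C_1\|\dd_\xx f_h\|^2$, and then use the discrete Poincar\'e--Wirtinger inequality on $\Omega$ to convert the retained Laplacian dissipation into genuine damping $-\mu\,\Phi^{n+1}$. What the paper's windowing argument buys is that it never needs the coercivity $\|\Delta_h\rho\|_{L^2}\ge C_P^{-1}\|\dd_\xx\rho\|_{L^2}$; what your approach buys is a single closed damped recursion in place of a two-scale summation, and a transparent exponential-in-time structure.
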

\begin{proof}
    The idea is to obtain a discrete analogue of the regularity result for linear parabolic equations in \cite[Section 7.1.3]{evans2010partial}. 
    Recalling \cref{eq:scheme-rho}, we know that $\rho^n_{i,j}$ satisfies
    \begin{equation}
        \frac{\rho^{n+1}_{i,j}-\rho^n_{i,j}}{\Delta t}
        =
        D_T(\dd^2_x \rho^{n+1}_{i,j} + \dd^2_y \rho^{n+1}_{i,j}) -\dd_x \bar U_{i,j}^{n+1} - \dd_y \bar V_{i,j}^{n+1},
    \end{equation}
    where the $\bar{U}$ and $\bar{V}$ terms are as before, cf. \cref{eq:fluxes-rho}.
    
    Next, we move the discrete Laplacian on the left-hand side, square both sides, multiply by $\Delta\xi$, and sum over all $i,j$. This yields
    \begin{multline}
        \label{eq:ddtgradrho}
             \sum_{i,j} \Delta\xi \left(\left|\frac{\rho^{n+1}_{i,j}-\rho^n_{i,j}}{\Delta t}\right|^2 - 2 D_T \frac{\rho^{n+1}_{i,j}-\rho^n_{i,j}}{\Delta t} (\dd^2_x \rho^{n+1}_{i,j} + \dd^2_y \rho^{n+1}_{i,j}) + D_T^2 \left|\dd^2_x\rho^{n+1}_{i,j} + \dd^2_y\rho^{n+1}_{i,j}\right|^2\right)\\
            = \sum_{i,j} \Delta\xi \left(\left|\dd_x\bar{U}^{n+1}_{i,j}\right|^2 + 2 \dd_x\bar{U}^{n+1}_{i,j}\dd_y\bar{V}^{n+1}_{i,j} + \left|\dd_y\bar{V}^{n+1}_{i,j}\right|^2\right).
    \end{multline}
    We observe that all but one term on the left-hand side are squares. Concerning the cross term, we obtain by a summation by parts
    \begin{align}
        \label{eq:ddtgradrho-interm}
        \begin{aligned}
        - 2 D_T  &\sum_{i,j} \Delta \xi \frac{\rho^{n+1}_{i,j}-\rho^n_{i,j}}{\Delta t} (\dd^2_x \rho^{n+1}_{i,j} + \dd^2_y \rho^{n+1}_{i,j})\\
        &=2 D_T \sum_{i,j} \Delta\xi \left(\frac{\dd_x\rho^{n+1}_{i+1/2,j}-\dd_x\rho^{n}_{i+1/2,j}}{\Delta t}\dd_x\rho^{n+1}_{i+1/2,j} + \frac{\dd_y\rho^{n+1}_{i,j+1/2}-\dd_y\rho^{n}_{i,j+1/2}}{\Delta t}\dd_y\rho^{n+1}_{i,j+1/2}\right)\\
        &\geq D_T \sum_{i,j} \Delta\xi  \left(\frac{\left|\dd_x\rho^{n+1}_{i+1/2,j}\right|^2-\left|\dd_x\rho^{n}_{i+1/2,j}\right|^2}{\Delta t}+\frac{ \left|\dd_y\rho^{n+1}_{i,j+1/2}\right|^2-\left|\dd_y\rho^{n}_{i,j+1/2}\right|^2}{\Delta t}\right),
        \end{aligned}
        \end{align}
    where we used the inequality $(a-b)a\geq\frac{1}{2}(a^2-b^2)$ in the last line. Substituting \cref{eq:ddtgradrho-interm} into \cref{eq:ddtgradrho}, we obtain
    \begin{align}
            \begin{aligned}
        D_T \sum_{i,j} \Delta\xi  & \left(\frac{\left|\dd_x\rho^{n+1}_{i+1/2,j}\right|^2-\left|\dd_x\rho^{n}_{i+1/2,j}\right|^2}{\Delta t}+\frac{ \left|\dd_y\rho^{n+1}_{i,j+1/2}\right|^2-\left|\dd_y\rho^{n}_{i,j+1/2}\right|^2}{\Delta t}\right) \\
        &\leq \sum_{i,j} \Delta\xi \left(\left|\dd_x\bar{U}^{n+1}_{i,j}\right|^2 + 2 \dd_x\bar{U}^{n+1}_{i,j}\dd_y\bar{V}^{n+1}_{i,j} + \left|\dd_y\bar{V}^{n+1}_{i,j}\right|^2\right)\\
        & \leq C \sum_{i,j,k} \Delta \xi |\dd_\xx f^{n+1}_{i,j,k}|^2,
        \end{aligned}
    \end{align}
    where we used Jensen's inequality and  Cauchy--Schwarz, to estimate  $|\dd_x \bar{U}^{n+1}_{i,j}|\leq |\dd_x f^{n+1}_{i-1/2,j}|+|\dd_x f^{n+1}_{i+1/2,j}|$ and $\dd_y\bar{V}$, respectively. In summary, we have, for some $C>0$, 
    \begin{equation}\label{eq:DTC}
        \sum_{i,j} \Delta\xi \frac{|\dd_\xx\rho_{i,j}^{n+1}|^2-|\dd_\xx\rho_{i,j}^{n}|^2}{\Delta t}\leq C \sum_{i,j,k} \Delta \xi |\dd_\xx f^{n+1}_{i,j,k}|^2.
    \end{equation}
    Fix $t>0$ and let $m=\lfloor t/\Delta t\rfloor$, $M=\lfloor (t+1)/\Delta t\rfloor$ and let $m\leq s\leq M$. Then, multiplying \eqref{eq:DTC} by $\Delta t$ and summing $n$ from $s$ to $M$, we get
    \begin{equation}\label{ineq:rhodxf}
    \begin{split}
        \|\dd_\xx\rho(t^{m+1})\|_{L^2(\Omega)}^2 & \leq\|\dd_\xx\rho(t^{s})\|_{L^2(\Omega)}^2+C\sum_{n=s}^{M}\Delta t\|\dd_\xx f(t^{n+1})\|_{L^2(\Omega)}^2\\& \leq \|\dd_\xx\rho(t^{s})\|_{L^2(\Omega)}^2+C\sum_{n=m}^{M}\Delta t\|\dd_\xx f(t^{n+1})\|_{L^2(\Omega)}^2.
    \end{split}
    \end{equation}
    We note that by Inequality \eqref{ineq:L2pregronwall} in the proof of \cref{cor:unifL2}, we have 
    \begin{equation}
    \begin{split}\label{eq:DTC2}
        \|\dd_\xi f_h(t^{n+1})\|_{L^2}^2\leq& \frac{C(\varepsilon)}{D(\varepsilon)}\|\dd_\theta B[c_h(t^{n+1})]\|_{L^2}^2\|f_h(t^{n+1})\|_{L^2}^2+\frac{1}{2\Delta t D(\varepsilon)}\left(\|f_h(t^{n})\|_{L^2}^2-\|f_h(t^{n+1})\|_{L^2}^2\right)\\
        \leq& C\left(\|f_h(t^{n+1})\|_{L^2}^2+\frac{1}{\Delta t }\left(\|f_h(t^{n})\|_{L^2}^2-\|f_h(t^{n+1})\|_{L^2}^2\right)\right),
    \end{split}
    \end{equation}
    for some $C>0$. Here, we used the uniform bound $\sup_{t\geq 0}\|\dd_\theta B[c_h](t)\|_{L^2}\leq C$ for some $C>0$, which follows from \cref{cor:disccreg} and \cref{prop:L2unif}. Then, multiplying \eqref{eq:DTC2} by $\Delta t$ and summing from $n=m$ to $M$, we have, for some $C>0$ independent of the time and mesh size,
    \begin{equation}\label{eq:DTC3}
        \sum_{n=m}^M\Delta t\|\dd_\xx f_h(t^{n+1})\|_{L^2}^2\leq C\left(\sum_{n=m}^M\|f_h(t^{n+1})\|_{L^2}^2\Delta t+\|f_h(t^{m})\|_{L^2}^2\right)\leq C.
    \end{equation}
    Here, we made use of the uniform bound $\sup_{t\geq 0}\|f_h(t)\|_{L^2}^2$ from \cref{prop:L2unif} and the inequality $(M+1-m)\Delta t\leq 2$ (which we have by construction and for $\Delta t<1$).

    Hence, multiplying \eqref{ineq:rhodxf} by $\Delta t$ and summing $s$ from $m$ to $M$,
    we get, using $\frac{1}{2}\leq\sum_{n=m}^M\Delta t\leq 2$, for some $C>0$
    \begin{equation}
    \label{ineq:rhorhoc}
        \|\dd_\xx\rho(t^{m+1})\|_{L^2(\Omega)}^2\leq C\left( 1+ \sum_{n=m}^M\Delta t\|\dd_\xx\rho(t^n)\|_{L^2(\Omega)}^2\right).
    \end{equation}
    Finally, using $\|\dd_\xx\rho(t^n)\|_{L^2(\Omega)}\leq C\|\dd_\xx f(t^n)\|_{L^2}$, for some constant $C>0$,
    we can bound the right hand side of \eqref{ineq:rhorhoc} via \eqref{eq:DTC3}. So, we may conclude, for some $C>0$ independent of $m$ and the mesh size,
    \begin{equation}
        \|\dd_\xx\rho(t^{m+1})\|_{L^2(\Omega)}^2\leq C.
    \end{equation}
    Hence, the result follows for any $t\geq 0$.
\end{proof}
\begin{prop}[$L^\infty$-estimate on $\dd_\xx c_h$]\label{prop:Linftygradc}
    With the same assumptions as in \cref{cor:1} and \cref{prop:rhoh1}, the solution to the numerical scheme as in \cref{eq:scheme} satisfies
    \begin{equation}
        \sup_{t\geq 0}\|\dd_\xx c_h(t)\|_{L^\infty(\Omega)}\leq C,
    \end{equation}
    for a constant $C>0$ that does not depend on the mesh size.
\end{prop}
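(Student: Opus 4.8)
The plan is to deduce the $L^\infty$-bound on the discrete spatial gradient $\dd_\xx c_h$ from the discrete Morrey inequality \cite[Theorem 4.1]{porretta2020note}, which in dimension $d=2$ gives $\|u_h\|_{L^\infty(\Omega)}\le C\|u_h\|_{1,p,\Omega}$ for any discrete function $u_h$ and any fixed $p>2$. Applying this to each component of $\dd_\xx c_h$, it suffices to bound $\|\dd_\xx c_h\|_{1,p,\Omega}$, that is, to control both the discrete gradient $\dd_\xx c_h$ and its own discrete gradient---the discrete Hessian $\dd_\xx^2 c_h$---in $L^p(\Omega)$ for some $p>2$, uniformly in time and mesh size. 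The $L^p$-bound on $\dd_\xx c_h$ itself is immediate from $c_h\in H^1$ (\cref{cor:disccreg}) and the uniform-in-time $L^2$-bound on $\rho_h$ (from \cref{prop:L2unif} together with the argument of \cref{cor:unifL2rho}) via the discrete Poincar\'e--Sobolev embedding. The real content is the $L^p$-bound on the Hessian with $p>2$; since a discrete $W^{2,p}$-elliptic regularity theory is not available, I would instead obtain it by bootstrapping the new $H^1$-bound on $\rho_h$ from \cref{prop:rhoh1}.

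Concretely, I would first exploit that on the uniform periodic grid all the finite-difference operators are constant-coefficient convolutions and hence commute. Applying $\dd_x$ to the discrete elliptic equation \cref{eq:discc} shows that $w_h:=\dd_x c_h$ solves the same discrete elliptic problem $(\dd_x^2+\dd_y^2)w_h-\alpha w_h+\dd_x\rho_h=0$ with source $\dd_x\rho_h$, and similarly for $\dd_y c_h$. The $L^2$-elliptic regularity estimate established in the proof of \cref{cor:disccreg} (the control of the mixed second differences by the diagonal ones, \cref{control_cross}, followed by squaring the equation) applies verbatim to $w_h$, yielding $\|\dd_\xx^2 w_h\|_{L^2(\Omega)}\le C\|\dd_x\rho_h\|_{L^2(\Omega)}$. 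These are third-order differences of $c_h$, so together with the bound $\|\dd_\xx^2 c_h\|_{L^2(\Omega)}\le C\|\rho_h\|_{L^2(\Omega)}$ of \cref{cor:disccreg} we conclude that $\dd_\xx^2 c_h$ is bounded in the discrete $H^1(\Omega)$-norm, uniformly in time, using the uniform-in-time bounds $\sup_{t\ge0}\|\rho_h\|_{L^2(\Omega)}$ and $\sup_{t\ge0}\|\dd_\xx\rho_h\|_{L^2(\Omega)}$ from \cref{prop:L2unif} and \cref{prop:rhoh1}. Since $d=2$, the discrete Poincar\'e--Sobolev embedding \cite[Theorem 3.2]{bessemoulin2015discrete} gives $\|\dd_\xx^2 c_h\|_{L^p(\Omega)}\le C\|\dd_\xx^2 c_h\|_{1,2,\Omega}\le C$ for every finite $p$, in particular for some $p>2$. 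Feeding this, and the analogous $L^p$-bound for $\dd_\xx c_h$, into the discrete Morrey inequality closes the argument.

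I expect the delicate point to be the bootstrap step rather than the final embedding: one must verify that the $L^2$-regularity argument of \cref{cor:disccreg} indeed transfers to the differentiated equation (which relies on the commutation of the difference operators and on periodicity, so that no boundary terms are produced in the summation by parts), and that the centred and forward discrete derivatives living on the primal and dual meshes can be interchanged at the cost of harmless constants, so that ``the discrete gradient of $\dd_\xx c_h$'' genuinely coincides, up to such constants, with the discrete Hessian bounded above. Once $\dd_\xx^2 c_h$ is known to be uniformly bounded in discrete $H^1(\Omega)$, the passage to $L^p$ with $p>2$ and the application of the discrete Morrey inequality are routine.
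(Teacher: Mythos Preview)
Your proposal is correct and follows essentially the same route as the paper: differentiate the discrete elliptic equation (using commutation of the constant-coefficient difference operators on the periodic grid), repeat the $L^2$-elliptic regularity argument of \cref{cor:disccreg} on the differentiated equation to bound the third-order differences of $c_h$ by $\|\dd_\xx\rho_h\|_{L^2}$, so that $\dd_\xx^2 c_h$ is uniformly in discrete $H^1(\Omega)$, then use the discrete Poincar\'e--Sobolev embedding to reach $L^p$ with $p>2$ (the paper takes $p=3$) and conclude via the discrete Morrey inequality. The technical caveat you flag about centred versus forward differences on primal and dual meshes is exactly what the paper handles by the explicit commutation identity $\dd_x\dd_x^2 c_{i+1/2,j}=\dd_x^2\dd_x c_{i+1/2,j}$.
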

\begin{proof}
    We first repeat the same elliptic $L^2$-estimate as in \cref{eq:L2c} for the gradient of $c_h$, by applying $\dd_x$ and $\dd_y$ to \eqref{eq:discc}. First, applying $\dd_x$ to \eqref{eq:discc} gives
        \begin{equation}\label{eq:ddxxxc}
            -\dd_x(\dd_x^2 c+\dd_y^2 c)_{i+1/2,j}=-\alpha\dd_x c_{i+1/2,j}+\dd_x\rho_{i+1/2,j},
        \end{equation}
    such that 
    \begin{equation}
    \begin{split}
        \dd_x\dd_x^2 c_{i+1/2,j}&=\frac{1}{\Delta x}
    \left(\dd^2_x c_{i+1,j}-\dd^2_x c_{i,j}\right)\\&=\frac{1}{\Delta x^3}\left(c_{i+2,j}-2c_{i+1,j}+c_{i,j}-c_{i+1,j}+2c_{i,j}-c_{i-1,j}\right)
    \\&=\frac{1}{\Delta x^2}\left(\dd_x c_{i+3/2,j}-2\dd_x c_{i+1/2,j}+\dd_x c_{i-1/2,j}\right)\\
    &=\dd_x^2 \dd_x c_{i+1/2,j}.
    \end{split}
    \end{equation}
    Squaring both sides of \eqref{eq:ddxxxc}, multiplying by $\Delta\xx$ and summing over $(i,j)$, then leads to
    \begin{equation}
    \label{eq:dxdxc}
    \begin{split}
        \sum_{i,j}\left|\dd_x(\dd_x^2 c+\dd_y^2 c)_{i+1/2,j}\right|^2\Delta\xx &\leq
        C \sum_{i,j}[\alpha^2 |\dd_x c_{i+1/2,j}|^2 + |\dd_x\rho_{i+1/2,j}|^2]\Delta\xx\\
        &\leq C (\|\dd_\xx c_h\|_{L^2(\Omega)}^2+\|\dd_\xx\rho_h\|_{L^2(\Omega)})\\
        &\leq C,
    \end{split}
    \end{equation}
    by \cref{prop:rhoh1} and \cref{cor:disccreg}, 
    for some $C>0$ independent of the mesh size. Similarly, we obtain
    \begin{equation}
        \sum_{i,j}\left|\dd_y(\dd_x^2 c+\dd_y^2 c)_{i,j+1/2}\right|^2\Delta\xx 
        \leq C.
    \end{equation}
    Expanding the left-hand side of \cref{eq:dxdxc}, an integration by parts yields
    \begin{subequations}
        \label{eq:deriv-grad-c}
    \begin{equation}
        \sum_{i,j} \left( |\dd_x\dd_x^2 c_{i+1/2,j}|^2 + 2|\dd_x\dd_{xy} c_{i,j+1/2}|^2+|\dd_x\dd_y^2 c_{i+1/2,j}|^2 \right) \Delta\xx
        \leq C,
    \end{equation}
    and similarly 
     \begin{equation}
        \sum_{i,j} \left(|\dd_y\dd_x^2 c_{i,j+1/2}|^2 + 2|\dd_y\dd_{xy} c_{i+1/2,j}|^2+|\dd_y\dd_y^2 c_{i,j+1/2}|^2 \right)\Delta\xx
        \leq C.
    \end{equation}
    \end{subequations}
    This implies that we have componentwise control over the $\|\cdot\|_{1,2}$-norm of the Hessian $\dd_\xx^2 c_h$. Now, because of the discrete Poincar\'e--Sobolev inequality \cite[Theorem 3.2]{bessemoulin2015discrete}, we have, in dimension two,
    \begin{align}
    \begin{split}
       \| \dd_x^2 &c_h\|_{L^3(\Omega)} + 
        \|\dd_{xy} c_h\|_{L^3(\Omega)} +
        \|\dd_y^2 c_h\|_{L^3(\Omega)}\\
       &\leq C_{PS}
       (\|\dd_x^2 c_h\|_{1,2,\Omega}
       +\|\dd_{xy} c_h\|_{1,2,\Omega}
       +
       \|\dd_y^2 c_h\|_{1,2, \Omega})\\
       &\leq C,
    \end{split}
    \end{align}
    where we used \cref{eq:deriv-grad-c}, that is
    $$
    |\dd_\xx c_h|_{1,3, \Omega} < C,
    $$
    for some constant $C>0$ independent of the mesh size. Therefore, by the discrete Morrey inequality \cite[Theorem 4.1]{porretta2020note}
    \begin{equation}
        \|\dd_\xx c_h\|_{L^\infty(\Omega)}\leq C_M|\dd_\xx c_h|_{1,3}^{\frac{3}{4}} \|\dd_\xx c_h\|_{L^2(\Omega)}^{\frac{1}{4}}.
    \end{equation}
    Now, the right-hand side can be uniformly bounded using  \cref{eq:deriv-grad-c} as well as \cref{prop:rhoh1}, which concludes the proof.
\end{proof}

\begin{cor}[$L^\infty$-estimate for $f_h$]
    Provided the same assumptions as for \cref{prop:Linftygradc} hold, the solution to the numerical scheme as in \cref{eq:scheme}, for the interaction terms $B\in\{B_0,B_\lambda\}$, satisfies 
    \begin{equation}
        \sup_{t\geq 0}\|f_h(t)\|_{L^\infty}\leq C,
    \end{equation}
    for some constant $C$ independent of the mesh size.
\end{cor}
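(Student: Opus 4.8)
The plan is to run the discrete Alikakos (Moser-type) iteration of \cref{cor:1}, but now on the full phase space $\Sigma$ and for $f_h$ itself, closing it with \cref{lem:discalik}. Concretely, for $p=\lambda_k=2^k-1$ I would test the scheme \eqref{eq:scheme} with $(p+1)(f^{n+1}_{i,j,k})^p\Delta\xi$ and sum over all triples $(i,j,k)$. The convexity inequality $(p+1)b^p(b-a)\ge b^{p+1}-a^{p+1}$ applied to the time difference produces $\tfrac{1}{\Delta t}(\|f_h(t^{n+1})\|_{L^{p+1}}^{p+1}-\|f_h(t^n)\|_{L^{p+1}}^{p+1})$, while summation by parts turns the three diffusive fluxes into a term $-\tfrac{4p}{p+1}\kappa\,\|\dd_\xi (f_h^{n+1})^{(p+1)/2}\|_{L^2}^2$ with $\kappa$ comparable to $\min\{D_T,1\}$, exactly as in \eqref{ineq:p12}, now also including the $\theta$-direction, whose diffusion coefficient is $1$.

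The two drift terms are both treated as advection with a uniformly bounded velocity. For the $\Pe$-drift the velocities are $\cos\theta_k,\sin\theta_k$, bounded by $1$; for the $\gamma$-drift the upwind velocity is $B[c^{n+1}]_{i,j,k+1/2}$, and here I would invoke \cref{prop:Linftygradc}: for $B\in\{B_0,B_\lambda\}$ one has $|B[c^{n+1}]_{i,j,k+1/2}|\le C\|\dd_\xx c_h(t^{n+1})\|_{L^\infty(\Omega)}\le C$, uniformly in the mesh and in time, since only the discrete gradient $\discgrad c$ enters (for $B_\lambda$ via nearest-neighbour interpolation). This uniform control of the advecting velocity is the crux of the argument and the reason $B_\tau$ must be excluded: that choice involves the discrete Hessian $\dischess c_h$, for which no $L^\infty$-bound is available (we control it only in $L^3$, cf.\ \cref{prop:rhoh1}). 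With both velocities bounded, I would reproduce the elementary inequality and weighted Young step of \cref{cor:1}, bounding each drift contribution by $\tfrac{\varepsilon}{2}\|\dd_\xi (f_h^{n+1})^{(p+1)/2}\|_{L^2}^2+C_\varepsilon\, p(p+1)\|f_h^{n+1}\|_{L^{p+1}}^{p+1}$, the only change being that the velocity-squared constant is now $\Pe^2+\gamma^2C^2$ rather than $\Pe^2$.

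Choosing $\varepsilon$ small enough to retain a strictly negative multiple of $\|\dd_\xi (f_h^{n+1})^{(p+1)/2}\|_{L^2}^2$, I would then apply the discrete Gagliardo--Nirenberg inequality \cite[Theorem 3.4]{bessemoulin2015discrete} to $u=(f_h^{n+1})^{(p+1)/2}$ — now in three dimensions, hence with the appropriate interpolation exponents — to trade the negative gradient term against $\|u\|_{L^2}^2=\|f_h^{n+1}\|_{L^{p+1}}^{p+1}$ and $\|u\|_{L^1}=\|f_h^{n+1}\|_{L^{(p+1)/2}}^{(p+1)/2}$. Performing the same bookkeeping as in \cref{app:alikalgebra} (tracking the powers of $2$ in $p=2^k-1$) puts the resulting inequality into the form \eqref{eq:discalik}, with coefficients $a_k,\epsilon_k,c_k$ of the type \eqref{eq:alikakos-coeffs} and with $\sup_n F_{k-1}^n=\sup_n\|f_h\|_{L^{(p+1)/2}}^{(p+1)/2}$ on the right-hand side. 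The hypotheses (i)--(iii) of \cref{lem:discalik} are verified exactly as in \cref{cor:1}, using mass conservation and $\|f^0\|_{L^\infty}<\infty$. That lemma then gives $\sup_{t\ge0}\|f_h(t)\|_{L^{\lambda_k+1}}\le A\max\{1,\|f^0\|_{L^\infty}\}$ uniformly in $k$, and letting $k\to\infty$ yields the claimed $L^\infty$-bound.

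The main obstacle is precisely the $\theta$-drift: unlike the $\rho$-equation treated in \cref{cor:1}, the advecting velocity here is the nonlocal term $B[c_h]$, and the iteration closes only if this velocity is bounded \emph{uniformly in $p$, in time, and in the mesh}. Supplying that bound is exactly the role of the $L^\infty$-gradient estimate \cref{prop:Linftygradc}, itself resting on the discrete Morrey inequality \cite[Theorem 4.1]{porretta2020note}, and it is what forces the restriction to $B\in\{B_0,B_\lambda\}$. A secondary technical point is ensuring that the three-dimensional Gagliardo--Nirenberg exponents assemble into coefficients growing at the prescribed rates $a_k\sim 2^{2k}$, $\epsilon_k=2^{-qk}$, $c_k=2^{\beta k}$ of \cref{lem:discalik}; this mirrors, but is not identical to, the two-dimensional computation underlying \cref{cor:1}.
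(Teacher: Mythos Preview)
Your proposal is correct and follows essentially the same route as the paper: test \eqref{eq:scheme} with $(p+1)(f^{n+1}_{i,j,k})^p$, use the $L^\infty$-bound on $\dd_\xx c_h$ from \cref{prop:Linftygradc} to treat the $\gamma$-drift as a bounded advection (which is exactly why $B_\tau$ is excluded), absorb the drift terms via weighted Young, apply the three-dimensional discrete Gagliardo--Nirenberg inequality $\|u\|_{L^2}\le C_{GN}\|u\|_{L^1}^{2/5}\|u\|_{1,2}^{3/5}$ to reach the form \eqref{eq:discalik}, and close with \cref{lem:discalik}. You are in fact more explicit than the paper about the $\Pe$-drift contribution (which the paper absorbs without comment into the displayed estimate) and about the passage $k\to\infty$.
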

\begin{proof}
    The idea of the proof is a discrete analogue of \cite[Proposition 3.5]{bruna2024lane}. We multiply \cref{eq:scheme} by $(p+1)(f_{i,j,k}^{n+1})^{p}$. After summation by parts and using Young's product inequality, we get, for some $\varepsilon>0$, 
    \begin{equation}
    \begin{split}
        \frac{1}{\Delta t}\bigg(\|&f_h(t^{n+1})\|_{L^{p+1}}^{p+1}  -\|f_h(t^{n})\|_{L^{p+1}}^{p+1}\bigg)\\ &\leq
        -\frac{4p}{p+1}\left(\min\{D_T,1\}-\gamma^2 C_c \frac{\varepsilon}{2}\right)\left\|\dd_\xi[f_h(t^{n+1})^{\frac{p+1}{2}}]\right\|_{L^2}^2+\frac{1}{\varepsilon}p(p+1)\|f_h(t^{n+1})\|_{L^{p+1}}^{p+1}
        \\ &\leq
        -\frac{4p}{p+1}C\left\|\dd_\xi[f_h(t^{n+1})^{\frac{p+1}{2}}]\right\|_{L^2}^2+\frac{1}{\varepsilon}p(p+1)\|f_h(t^{n+1})\|_{L^{p+1}}^{p+1},
    \end{split}
    \end{equation}
    where $C_c$ comes from \cref{prop:Linftygradc}, and $C>0$ is some constant. Then, using the discrete Gagliardo--Nirenberg inequality \cite[Theorem 3.4]{bessemoulin2015discrete} for dimension three
    \begin{equation}
        \|u\|_{L^2}\leq C_{GN}\|u\|_{L^1}^{\frac{2}{5}}\|u\|_{1,2}^{\frac{3}{5}},
    \end{equation}
    we can estimate
    \begin{equation}\label{eq:AlikFAlgebra}
        \begin{split}
            &\frac{1}{\Delta t}(\|f_h(t^{n+1})\|_{L^{p+1}}^{p+1}-\|f_h(t^{n})\|_{L^{p+1}}^{p+1})\leq-\epsilon_k\|f_h(t^{n+1})\|_{L^{p+1}}^{p+1}+(a_k+\epsilon_k)c_k\left(\|f_h(t^{n+1})\|_{L^{\frac{p+1}{2}}}^{\frac{p+1}{2}}\right)^2,
        \end{split}
    \end{equation}
    where $\epsilon_k,a_k,c_k$ are as in \cref{lem:discalik} (see the appendix for the algebra). Hence, the result follows by an application of \cref{lem:discalik}.
\end{proof}

  \section{Numerical results} \label{sec:numerics}

In this section, we test the numerical scheme analysed in the previous sections. Since we do not have access to explicit solutions, we test the convergence of the numerical scheme \eqref{eq:scheme} by comparing solutions with increasingly finer meshes. For ease of plotting, we choose $y$-invariant initial data so that, given the form of the equations and boundary conditions of System \eqref{full_problem_intro}, solutions remain $y$-invariant for $t>0$, and we may use a $y$-invariant version of the numerical scheme (corresponding to setting $N_y = 1$ in \eqref{def:scheme}). 
An example of a fully three-dimensional numerical solution is given in \cref{fig:btau}, in \cref{app:fig3d}, confirming the behaviour found in \cite{bertucci2024curvature,bruna2024lane}. The numerical scheme is implemented in \texttt{Julia} and can be accessed in \cite{odewit8github}. 

First, we consider the time evolution of the spatial density $\rho(t,x)$ with the interaction $B_0$, \cref{eq:B0}, with $N_x = N_\theta = N$. \cref{fig:rhoev} shows the resulting numerical solutions for three different mesh sizes $N = 32, 64, 128$ up to the final time $T = 1.0$, with the characteristic aggregation behaviour as in \cite{bruna2024lane}. 
The solution is close to a steady state at the final time $T$, as indicated by $L^2$- and $L^\infty$-norms of the discrete time derivative being of order $10^{-3}$ (not shown here). 
We also observe that solutions stay uniformly bounded for all times, consistent with our analysis in \cref{sec:highreg}.

\begin{figure}[bt]
    \centering
    \includegraphics[width=0.95\textwidth]{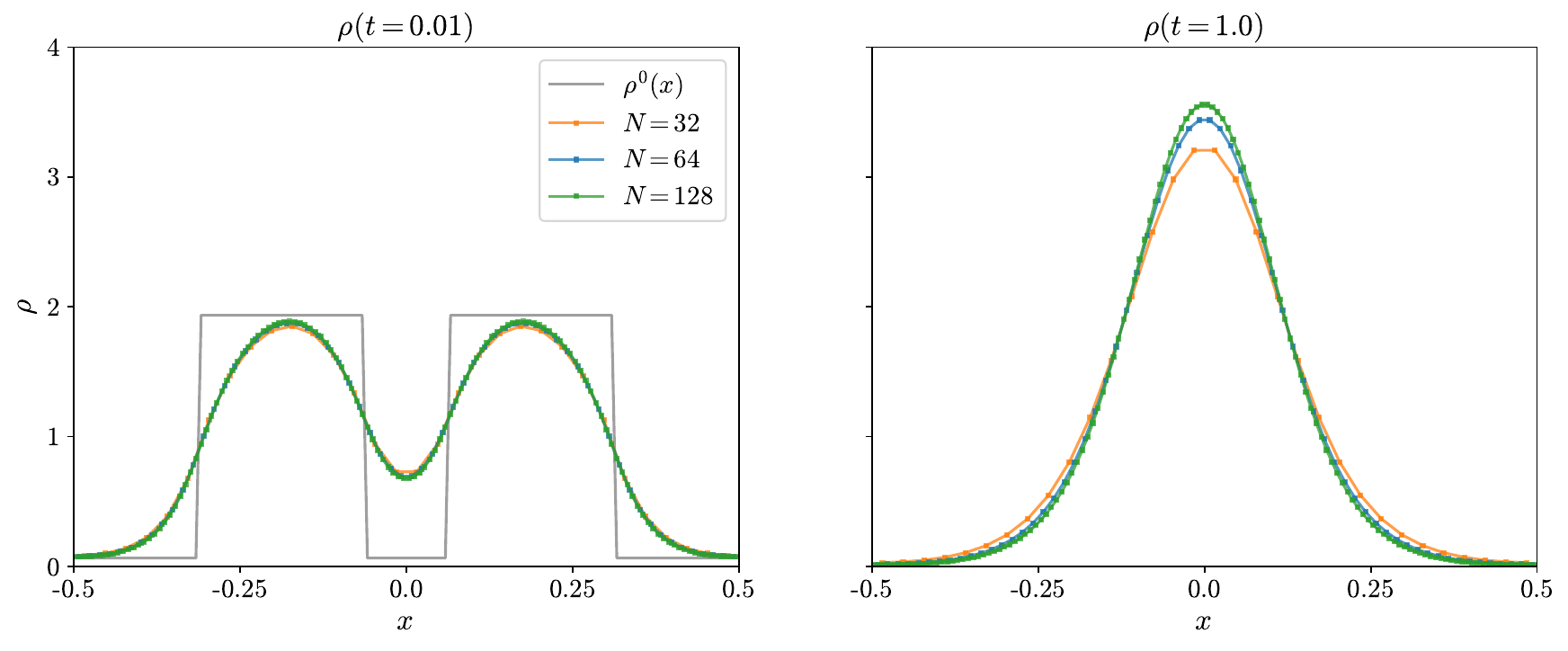}
    \caption{Spatial density $\rho(t,x)$ from \cref{eq:schemerho} at times $t = 0.1$ (left) and $t = 1.0$ (right) for different mesh sizes for $D_T=10^{-1},\gamma=500,\Pe=2,\alpha=1,B=B_0,\Delta t=10^{-2}$. 
    The initial condition (shown in grey on the left panel) is $f^0(x, \theta) = C \mathbf{1}_{|x\pm\frac{1}{8}|\leq\frac{1}{8}}$, with $C>0$ such that $\int f\dd x\dd\theta=1$. 
    }
    \label{fig:rhoev}
\end{figure}

\begin{figure}[thb]
    \centering
    \subfloat[$B_0$]{\includegraphics[width=0.85\textwidth]{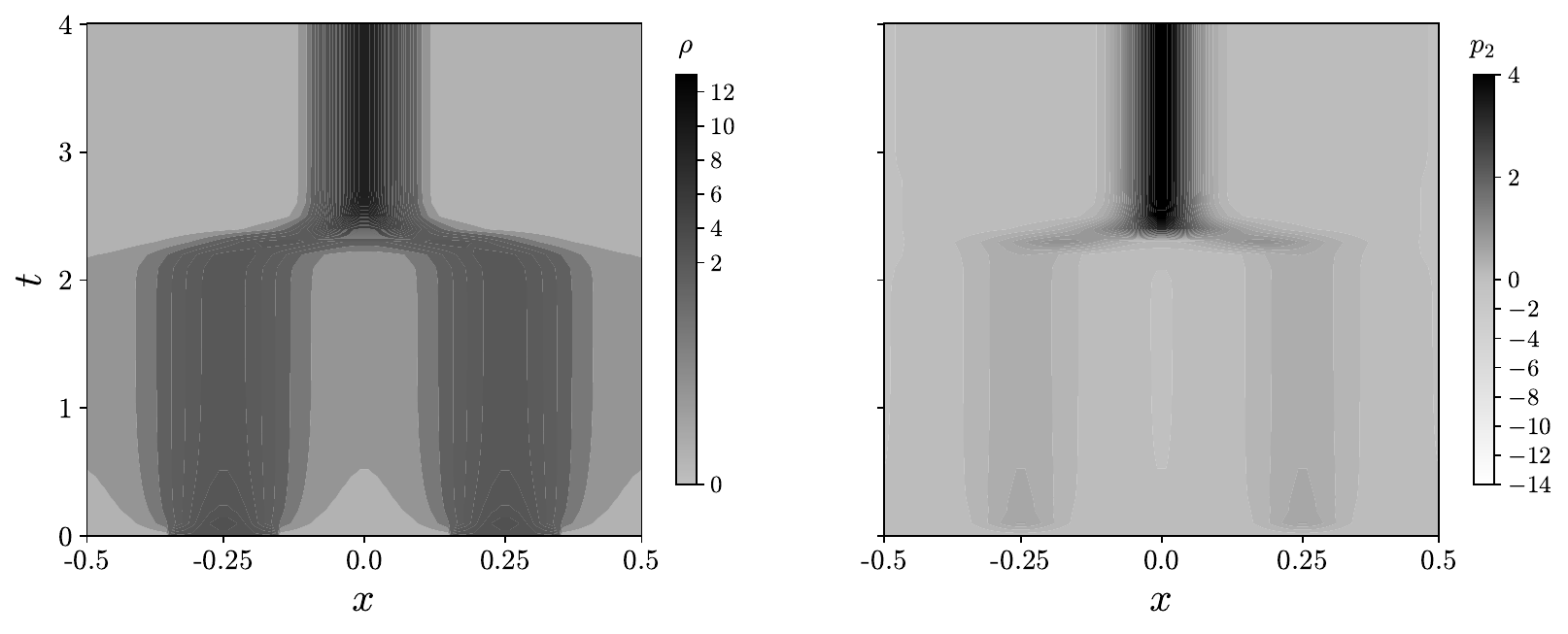}}\\
    \subfloat[$B_\lambda$]{\includegraphics[width=0.85\textwidth]{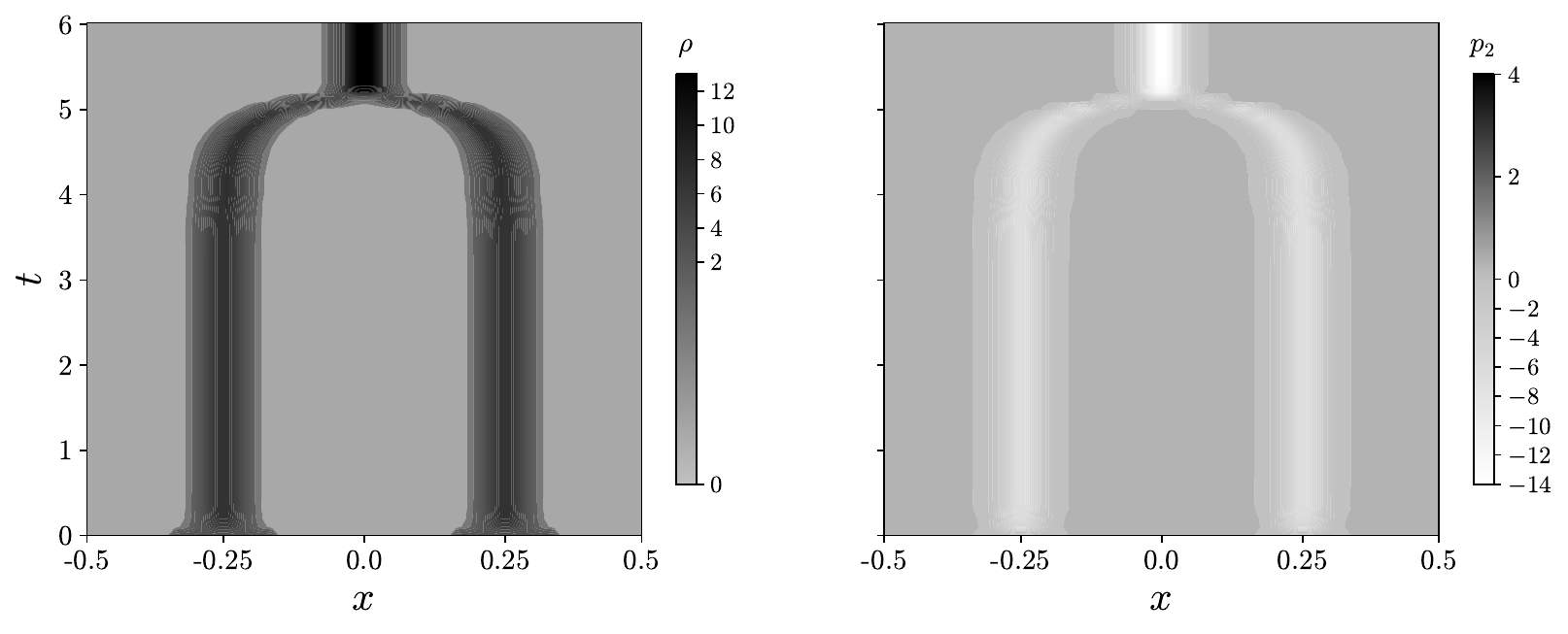}}
    \caption{Evolution of $\rho(t,x)$ from \cref{eq:schemerho} and $p_2(t,x) = \int_0^{2\pi}\cos(2\theta)f(t,x,\theta)\dd\theta$ for $B$-terms (a) $B_0$ \eqref{eq:B0_sch} and (b) $B_\lambda$ \eqref{eq:blambdadisc} with $\lambda=0.1$, for $D_T=10^{-1},\gamma=500,\Pe=2,\alpha=1,\Delta t=10^{-3},N=64$. The initial condition is $f^0(x, \theta) = C \mathbf{1}_{|x\pm\frac{1}{4}|\leq\frac{1}{8}}$, with $C>0$ such that $\int f^0\dd x\dd\theta=1$.}
    \label{fig:rhoev2}
\end{figure}
It is interesting to consider the effect the different interaction terms $B\in\{B_0,B_\lambda,B_\tau\}$ have on the evolution. Recall that $B_0$ and $B_\tau$ are the zeroth- and first-order Taylor expansions in $\lambda$ of $B_\lambda$. In \cite{bruna2024lane} it was observed that the model with $B_0$ can only display static aggregation, whereas $B_\lambda$ can lead to both static aggregations and dynamical laning. In \cref{fig:rhoev2}, the time-evolution of the spatial density $\rho(t,x)$ and  the second-order polarisation $p_2(t,x)=\int_0^{2\pi} \cos(2\theta) f(t,x,\theta)\dd\theta$ are shown for the interaction terms $B_0$ and $B_\lambda,\lambda=0.1$. We choose as initial data two bumps of equal mass,  $f^0(x,\theta)=C \mathbf{1}_{|x\pm\frac{1}{4}|\leq\frac{1}{4}}$, with $C$ such that $f^0$ is normalised. 
We observe that, for both interaction types $B_0$ and $B_\lambda$, the spatial density initially relaxes into two symmetric quasi-steady bumps, but eventually destabilises into one steady aggregate. The final spatial density is more localised for $B_\lambda$, that is, the bump is narrower and has a higher local maximum than that corresponding to $B_0$. While the spatial density looks qualitatively similar for both interaction models, the same is not true for the second-order polarisation. Here, we note a different sign between the models, with $p_2>0$ for $B_0$ and $p_2<0$ for $B_\lambda$. As noted in \cite{bruna2024lane}, the reason for this is that, for $\lambda=0$, the polarisation peaks at $\theta=0,\pi$, whereas when $\lambda$ increases, peaks emerge at $\theta=\pm \frac{\pi}{2}$, which correspond to a shift in the sign of $p_2$ (see \cite[Fig. 5 in]{bruna2024lane}). This shows that the (quasi-)steady states that the solutions converge to are different depending on the interaction term $B$, even though the spatial densities look qualitatively similar.

In \cref{fig:tauvlambda} we show the relative norm of the difference between the numerical solutions at final time $T$ and fixed mesh size for the interaction term $B_\lambda$ and $B_\tau$, with $\tau = \lambda$. We define the relative difference as 
\begin{equation}
    \mathrm{e}_{\lambda,\tau,L}(t)=\|f^\lambda(t)-f^\tau(t)\|_L/\|f^\lambda(t)\|_{L},
\end{equation}
where $f^\lambda$ is the solution for the interaction term \eqref{eq:blambdadisc} and $f^\tau$ for \eqref{eq:btaudisc} for the given mesh size, for any norm $L$. As $\lambda$ decreases, the difference between the two numerical solutions decreases. This is as expected, since $B_\lambda=B_{\tau=\lambda}+O(\lambda^2)$. This gives evidence that the dynamics for the interaction terms $B_\lambda$ and $B_\tau$ is mostly similar.

\begin{figure}[htb]
    \centering
    \includegraphics[width=0.48\linewidth]{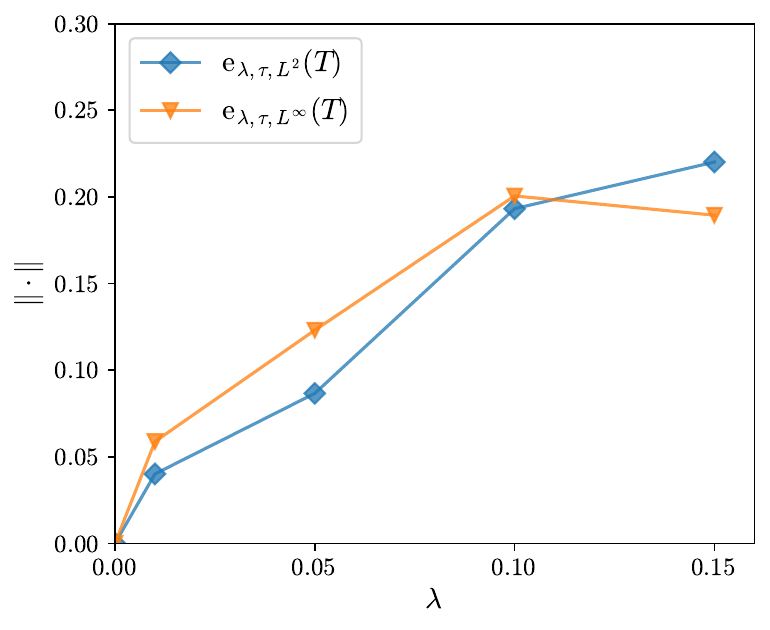}
    \caption{Relative norm of the difference between numerical solutions for $B_\lambda$ and $B_\tau$, $\lambda=\tau$, with $D_T=10^{-1},\gamma=500,\Pe=2,N=64,\Delta t=10^{-2}, T=1.0$. 
    The initial condition is $f^0(x, \theta) = C \mathbf{1}_{|x\pm\frac{1}{8}|\leq\frac{1}{8}}$, with $C>0$ such that $\int f\dd x\dd\theta=1$, as in \cref{fig:rhoev}.}
    \label{fig:tauvlambda}
\end{figure}

Next, we numerically test the order of convergence of the scheme \eqref{eq:scheme}. We define the relative error of the approximation $f_h$ with mesh size $h$ at time $t$ as
\begin{equation}\label{eq:reler}
    \mathrm{e}_{h,L}(t)=\|f_h(t)-f_{256^{-1}}(t)\|_L/\|f_{256^{-1}}(t)\|_L,
\end{equation}
for any norm $L$, with respect to the finest mesh with $N=256,h=2^{-8}$. In \cref{fig:compLs} shows the relative errors in $L^2$ and $L^\infty$ at the final time $T$ for the interaction term $B_0$. We use the same scheme and initial data as used for \cref{fig:rhoev}. 
This shows that the scheme is of order one in space, as can be expected for a linear finite volume discretisation. Since we are using an order one discretisation in time, we can expect that the full scheme is also of order one. In \cref{fig:compLslambda}, the relative error for the interaction term $B_\lambda$ with $\lambda=0.1$ is shown, again with the same scheme and initial data as in \cref{fig:rhoev} and the $B_\lambda$ term as in \cref{eq:blambdadisc}, and it shows that it complies with  order one convergence.
\begin{figure}[htb]
    \centering
    \subfloat[$B_0$]{\includegraphics[width=0.48\linewidth]{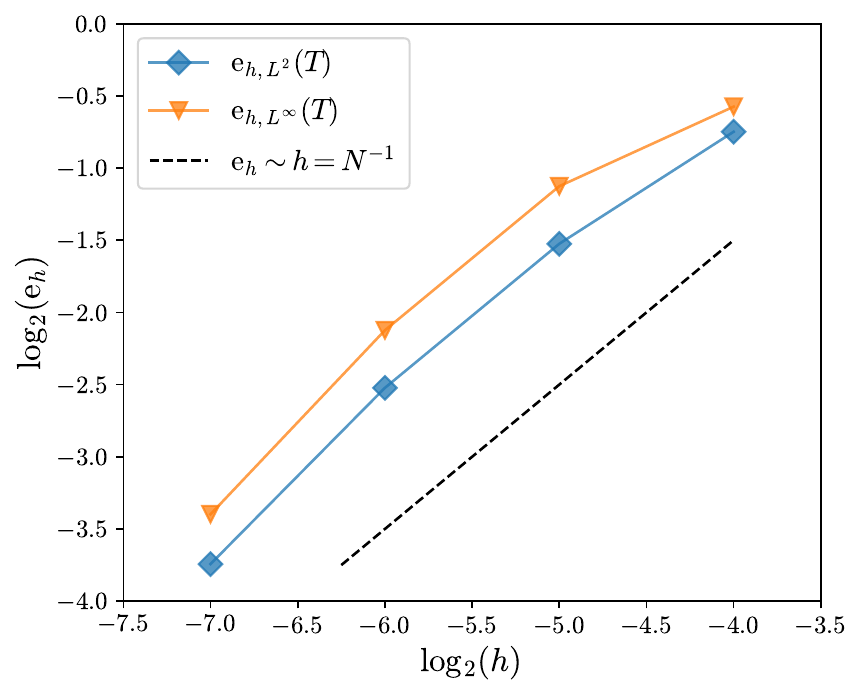}\label{fig:compLs}}
    \subfloat[$B_\lambda$]{\includegraphics[width=0.48\linewidth]{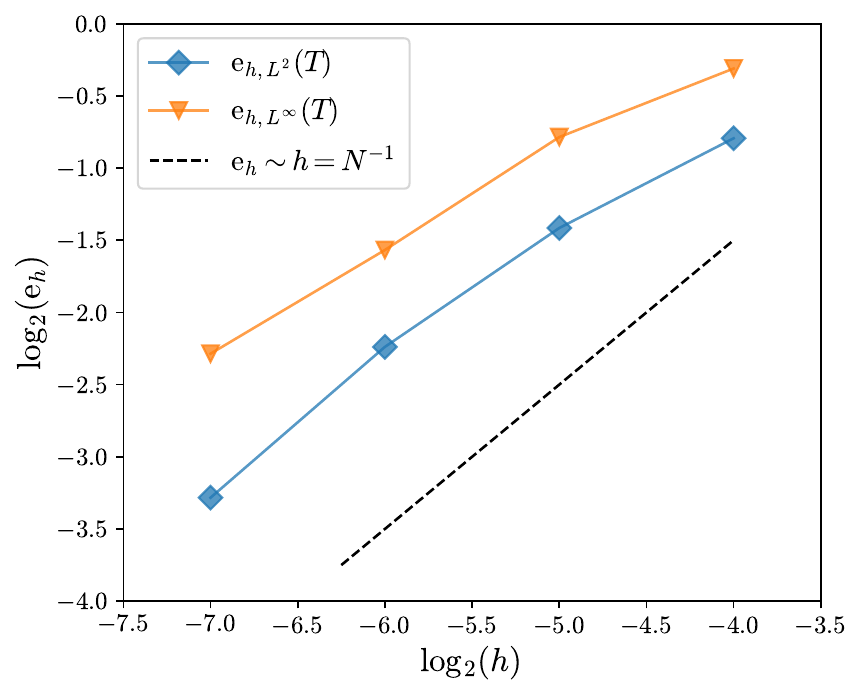}\label{fig:compLslambda}}
    \caption{Relative error \eqref{eq:reler} for numerical solutions to \eqref{eq:scheme} with varying mesh size, for (a) $B_0$ interaction \eqref{eq:B0_sch} and (b) $B_\lambda$ \eqref{eq:blambdadisc} with $\lambda=0.1$. The initial condition and the scheme are as in \cref{fig:rhoev}.
    Other parameters used: $T=1.0,D_T=10^{-1},\gamma=500,\Pe=2,\alpha=1,\Delta t=10^{-2}$.
    }
\end{figure}

\section*{Data availability statement}
The research code associated with this article are available in numerics\_ants, under the reference \url{https://github.com/odewit8/numerics_ants}.

\printbibliography

\appendix

\section{Calculations for look-ahead term}\label{sec:appendix}
\begin{mydef}[$B_\lambda$-term via interpolation]\label{def:discblambda}
    We define the $B_\lambda$-term as 
    \begin{equation}
    B_\lambda[c^n]_{i,j,k+1/2}=\mathbf{n}(\theta_{k+1/2})\cdot\discgrad c_{i,j,k+1/2}^n,
\end{equation}
where the discrete gradient with triple indices is accordingly defined as
\begin{equation}
    \discgrad c_{i,j,k+1/2}=\frac{1}{2}\begin{pmatrix}
        \frac{1}{\Delta x}[c_{i+1,j,k+1/2}-c_{i-1,j,k+1/2})\\
        \frac{1}{\Delta y}(c_{i,j+1,k+1/2}-c_{i,j-1,k+1/2})
    \end{pmatrix},
\end{equation}
as before and where the term $c_{i,j,k+1/2}$ is defined by nearest-neighbour interpolation to approximate $c$ at the point $\xx_{i,j}+\lambda\mathbf{e}(\theta_{k+1/2})$, so that $c_{i,j,k+1/2}\approx c(\xx_{i,j}+\lambda\mathbf{e}(\theta_{k+1/2}))$. That is,  we define
\begin{equation}
c_{i,j,k+1/2}=c_{i(k+1/2),j(k+1/2)} \ \mathrm{if} \ \mathbf{x}_{i,j}+\lambda\mathbf{e}(\theta_{k+1/2})\in C_{i(k+1/2),j(k+1/2)},
\end{equation}
so that $i(k+1/2)$ and $j(k+1/2)$ denote the $i$- and $j$-indices of the cell in which the point $\mathbf{x}_{i,j}+\lambda\mathbf{e}(\theta_{k+1/2})$ lies.
See also \cref{fig:cijk} for an illustration of the points involved in the nearest-neighbour interpolation.
\end{mydef}
\begin{figure}[htb]

\centering
\begin{tikzpicture}
    \draw[step=1cm,gray!50,thin] (-1,-1) grid (5,5);
    
    \node[fill=black,circle,inner sep=1pt] (ij) at (0.5,0.5) {};
    \node[below] at (ij) {\(i,j\)};
    
    \node[fill=black,circle,inner sep=1pt] (k_plus_half) at (2.5,3.5) {};
    \node[above] at (k_plus_half) {\(i(k+1/2),j(k+1/2)\)};
    
    \node[fill=orange,circle,inner sep=1pt] (lambda1) at (2.8,3.2) {};
    \node[below] at (lambda1) {\(\xx_{i,j} + \lambda \mathbf{e}(\theta_{k+1/2})\)};
    
    \node[fill=orange,circle,inner sep=1pt] (lambda2) at (3.75,1.752) {};
    \node[below] at (lambda2) {\(\xx_{i,j} + \lambda \mathbf{e}(\theta_{k-1/2})\)};
    
\end{tikzpicture}
\caption{Nearest-neighbour interpolation for $B_\lambda$.}
\label{fig:cijk}
\end{figure}
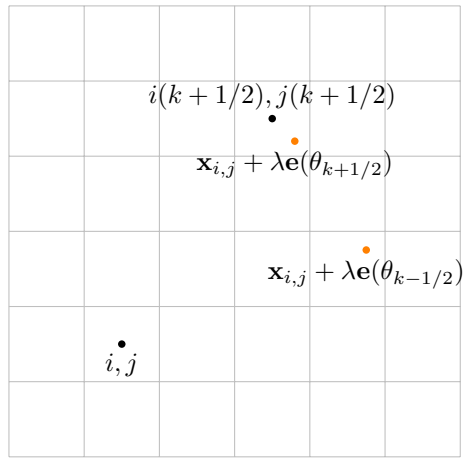
\begin{proof}[Proof of $B_\lambda$ interaction.]
We have
    \begin{equation}
        \dd_\theta B_\lambda[c]_{i,j,k}=\frac{1}{\Delta \theta}\left(\mathbf{n}(\theta_{k+1/2})\cdot\discgrad c_{i,j,k+1/2}-\mathbf{n}(\theta_{k-1/2})\cdot\discgrad c_{i,j,k-1/2}\right).
    \end{equation}
By adding and subtracting a term with mixed signs in the $k$-index we can also write this as 
\begin{equation}
    \begin{split}
        \dd_\theta B_\lambda[c]_{i,j,k}=&\frac{1}{\Delta \theta}(\mathbf{n}(\theta_{k+1/2})-\mathbf{n}(\theta_{k-1/2}))\cdot\discgrad c_{i,j,k+1/2}\\+&\frac{1}{\Delta\theta}\mathbf{n}(\theta_{k-1/2})\cdot(\discgrad c_{i,j,k+1/2}-\discgrad c_{i,j,k-1/2})\\
        =:& T_1+T_2.
    \end{split}
\end{equation}
The idea of the proof is to bound $T_1$ and $T_2$ as the terms that would appear by applying the chain rule and product rule at the continuum level,
\begin{equation}
    \partial_\theta (\mathbf{n}_\theta\cdot\nabla c(\xx+\lambda\mathbf{e}_\theta))=\underbrace{-\mathbf{e}_\theta\cdot\nabla c(\xx+\lambda\mathbf{e}_\theta))}_{\approx T_1}+\underbrace{\lambda\mathbf{n}_\theta\cdot D^2 c(\xx+\lambda\mathbf{e}_\theta))\mathbf{n}_\theta}_{\approx T_2}.
\end{equation}
For $T_1$, we first note that
\begin{equation}
 \frac{c_{i+1,j,k+1/2}-c_{i-1,j,k+1/2}}{2\Delta x}=\frac{c_{i(k+1/2)+1,j(k+1/2)}-c_{i(k+1/2)-1,j(k+1/2)}}{2\Delta x},
\end{equation}
so that we have
\begin{equation}
    \discgrad_x c_{i,j,k+1/2}=\discgrad_x c_{i(k+1/2),j(k+1/2)},
\end{equation}
and the same applies to the $y$-component. Therefore, we can bound $T_1$ as
\begin{equation}
    |T_1|\leq \max_{\theta\in[0,2\pi]}|\mathbf{e}(\theta)||\discgrad c_{i(k+1/2),j(k+1/2)}|.
\end{equation}
For $T_2$, we split the $x$- and $y$-components of the inner product, so that we may write
\begin{equation}
\begin{split}
    T_2=&-\tfrac{1}{\Delta\theta}\sin(\theta_{k-1/2})(\discgrad_x c_{i(k+1/2),j(k+1/2)}-\discgrad_x c_{i(k-1/2),j(k-1/2)})\\&+\tfrac{1}{\Delta\theta}\cos(\theta_{k-1/2})(\discgrad_y c_{i(k+1/2),j(k+1/2)}-\discgrad_y c_{i(k-1/2),j(k-1/2)})\\
    =:&T_2^x+T^y_2.
\end{split}
\end{equation}
We note that, for the discrete derivatives in the $x$-component, after adding a term with mixed signs for the $k$-index,
\begin{equation}
\begin{split}
    &\discgrad_x c_{i(k+1/2),j(k+1/2)}-\discgrad_x c_{i(k-1/2),j(k-1/2)}\\
    = \ &(\discgrad_x c_{i(k+1/2),j(k+1/2)}-\discgrad_x c_{i(k-1/2),j(k+1/2)})
    +(\discgrad_x c_{i(k-1/2),j(k+1/2)}-\discgrad_x c_{i(k-1/2),j(k-1/2)})\\
    =: \ &T_{2,1}^x+T_{2,2}^x.
\end{split}
\end{equation}
Now, focusing on $T^x_{2,1}$, we can add a telescoping sum ranging in the $i$-index from $i(k-1/2)$ to $i(k+1/2)$, so that
\begin{equation}
\begin{split}
    T_{2,1}^x=&-\discgrad_x c_{i(k-1/2),j(k+1/2)}+\sum_{r=i(k-1/2)+1}^{i(k+1/2)-1}[\discgrad_x c_{r,j(k+1/2)}-\discgrad_x c_{r,j(k+1/2)}]+\discgrad_x c_{i(k+1/2),j(k+1/2)}\\
    =&\sum_{r=i(k-1/2)}^{i(k+1/2)-1}[\discgrad_x c_{r+1,j(k+1/2)}-\discgrad_x c_{r,j(k+1/2)}].
\end{split}
\end{equation}
By adding and subtracting at each summand the term $\dd_x c_{r+1/2,j(k+1/2)}$ (as defined on the dual mesh) we get 
\begin{equation}
\begin{split}
    T_{2,1}^x=\sum_{r=i(k-1/2)}^{i(k+1/2)-1}\frac{1}{2\Delta x}\Big[&(c_{r+2,j(k+1/2)}-c_{r,j(k+1/2)})-2(c_{r+1,j(k+1/2)}-c_{r,j(k+1/2)})\\& + 2(c_{r+1,j(k+1/2)}-c_{r,j(k+1/2)})-(c_{r+1,j(k+1/2)}-c_{r-1,j(k+1/2)})\Big]\\
    = \sum_{r=i(k-1/2)}^{i(k+1/2)-1}\frac{\Delta x}{2}(&\dd_x^2 c_{r+1,j(k+1/2)}+\dd_x^2 c_{r,j(k+1/2)}).
\end{split}
\end{equation}
For $T_{2,2}^x$, we do a similar calculation. By adding a telescoping sum we obtain
\begin{equation}
    \begin{split}
        T_{2,2}^x=\sum_{r=j(k-1/2)}^{j(k+1/2)-1}[\discgrad_x c_{i(k-1/2),r+1}-\discgrad_x c_{i(k-1/2),r}],
    \end{split}
\end{equation}
and by adding and subtracting $\frac{1}{2\Delta x}[c_{i(k-1/2),r+1}-c_{i(k-1/2),r}]$ for each summand we can write 
\begin{equation}
\begin{split}
    T_{2,2}^x=\sum_{r=j(k-1/2)}^{j(k+1/2)-1}\frac{\Delta y}{2}[\dd_x\dd_yc_{i(k-1/2)-1/2,r+1/2}+\dd_x\dd_yc_{i(k-1/2)+1/2,r+1/2}],
\end{split}
\end{equation}
recalling the notation 
\begin{equation}
    \dd_x\dd_yc_{i+1/2,j+1/2}=\frac{1}{\Delta\xx}[c_{i,j}-c_{i+1,j}-c_{i,j+1}+c_{i+1,j+1}].
\end{equation}
We now recall the definitions $C_{\Delta x}=\Delta\theta/\Delta x,C_{\Delta y}=\Delta\theta/\Delta y$, so that by inserting $T_{2,1}^x$ and $T_{2,2}^x$ from above into $T^x_2$ we are lead to
\begin{equation}
\begin{split}
    T_{2}^x=&\frac{-\sin(\theta_{k-1/2})}{2 C_{\Delta x}}\sum_{r=i(k-1/2)}^{i(k+1/2)-1}[\dd_x^2 c_{r+1,j(k+1/2)}+\dd_x^2 c_{r,j(k+1/2)}]\\+&\frac{-\sin(\theta_{k-1/2})}{2 C_{\Delta y}}\sum_{r=j(k-1/2)}^{j(k+1/2)-1}[\dd_x\dd_yc_{i(k-1/2)-1/2,r+1/2}+\dd_x\dd_yc_{i(k-1/2)+1/2,r+1/2}].
\end{split}
\end{equation}
We can use a similar argument for $T_{2}^y$, so that
\begin{equation}
\begin{split}
    T_{2}^y=&\frac{\cos(\theta_{k-1/2})}{2 C_{\Delta x}}\sum_{r=i(k-1/2)}^{i(k+1/2)-1}[\dd_x\dd_yc_{r+1/2,j(k-1/2)-1/2}+\dd_x\dd_yc_{r+1/2,j(k-1/2)+1/2}]\\+&\frac{\cos(\theta_{k-1/2})}{2 C_{\Delta y}}\sum_{r=j(k-1/2)}^{j(k+1/2)-1}[\dd_y^2 c_{i(k+1/2),r+1}+\dd_y^2 c_{i(k+1/2),r}],
\end{split}
\end{equation}
Putting everything together, we get, using Jensen's inequality,
\begin{equation}
\begin{split}
    &\|\dd_\theta B[c_h]\|_{L^2}^2\leq \sum_{i,j,k}|\discgrad c_{i(k+1/2),j(k+1/2)}|^2\Delta\xi\\&+\max\{2\pi\lambda,\tfrac{1}{2\varepsilon_C}\} \sum_{i,j,k}[|\dd_x^2 c_{i(k+1/2),j(k+1/2)}|^2+2|\dd_x\dd_y c_{i(k+1/2)+1/2,j(k+1/2)+1/2}|^2+|\dd_y^2 c_{i(k+1/2),j(k+1/2)}|^2]\Delta\xi
    \\&\leq(2\pi)[\|\dd_\xx c_h\|_{L^2}^2+\max\{2\pi\lambda,\tfrac{1}{2\varepsilon_C}\} \|\dd_\xx^2 c_h\|_{L^2}^2].
\end{split}
\end{equation}
Here we used that $|i(k+1/2)-1-i(k-1/2)|\leq \max\{2\pi \lambda C_{\Delta x},1\},|j(k+1/2)-1-j(k-1/2)|\leq \max\{2\pi \lambda C_{\Delta y},1\}$ and that $C_{\Delta x}$ and $C_{\Delta y}$ are uniformly bounded below strictly from zero by $\varepsilon_C>0$.

We also note that it is clear that, for any grid function $(g_{i,j})_{(i,j)}$, $\sum_{i,j}g_{i(k),j(k)}=\sum_{i,j}g_{i,j}$, since the $k$-index shift induces only a translation of the grid points.

This concludes the proof for the $B_\lambda$-term.
\end{proof}
\section{Algebra for the Alikakos lemma}\label{app:alikalgebra}
\begin{proof}[Algebra for \eqref{eq:AlikRhoAlgebra}]
    First we note that the discrete Gagliardo--Nirenberg inequality in dimension two implies 
    \begin{equation}
        \|u\|_{L^2}^2\leq \frac{1}{\kappa}C^2_{GN}\|u\|_{L^1}^2+\kappa\|u\|_{1,2}^2,
    \end{equation}
    for any $0<\kappa<1$. This can be rewritten as
    \begin{equation}\label{eq:kapparho}
        -\frac{\kappa}{1-\kappa}|u|_{1,2}^2\leq-\|u\|_{L^2}^2 +\frac{1}{\kappa(1-\kappa)}C_{GN}^2\|u\|_{L^1}^2.
    \end{equation}
    We now let $\kappa$ be such that $2^{-qk}=\frac{\kappa}{1-\kappa}$. We let $\varepsilon$ be such that $D_T-\frac{\varepsilon}{2}\Pe^2=\frac{1}{2}$, and define $p=2^k-1$,$a_k=\frac{2}{\varepsilon}(2^k-1)2^k=\frac{2}{\varepsilon}p(p+1)$, $\epsilon_k=\frac{1}{2^{qk}}$ and $c_k=2^{\beta k}$, so that $\beta\geq 0$ is such that $C_{GN}^2\frac{1}{\kappa(1-\kappa)}\leq 2^{\beta k}$ for all integers $k\geq 1$. (The latter is true for $\beta$ sufficiently larger than $q$, but we still have to choose $q$.) Then, \eqref{eq:kapparho} can be written as
    \begin{equation}\label{eq:rhoalik1}
        -\epsilon_k|u|_{1,2}^2\leq -\|u\|_{L^2}^2+c_k\|u\|_{L^1}^2.
    \end{equation}
    We now require $-\frac{2p}{p+1}\leq -(a_k+\epsilon_k)\epsilon_k$, which can be written as 
    \begin{equation}
        -\frac{2(2^k-1)}{2^k}\leq -\left(\frac{2}{\varepsilon}(2^k-1)2^k+\frac{1}{2^{qk}}\right)\frac{1}{2^{qk}}.
    \end{equation}
    This inequality holds for all integers $k\geq 1$ for $q\geq 0$ large enough.

    Hence, multiplying \eqref{eq:rhoalik1} by $(a_k+\epsilon_k)$ we get 
    \begin{equation}\label{eq:rhoalik2}
        -\frac{2p}{p+1}|(\rho_h)^{\frac{p+1}{2}}|_{1,2}^2\leq-(a_k+\epsilon_k)\epsilon_k|(\rho_h)^{\frac{p+1}{2}}|_{1,2}^2\leq-(a_k+\epsilon_k)\|(\rho_h)^{\frac{p+1}{2}}\|_{L^2}^2+(a_k+\epsilon_k)c_k \|(\rho_h)^{\frac{p+1}{2}}\|_{L^1}^2.
    \end{equation}
    This gives the algebra to derive \eqref{eq:AlikRhoAlgebra}.
\end{proof}
\begin{proof}[Algebra for \eqref{eq:AlikFAlgebra}]
    The discrete Gagliardo--Nirenberg inequality in dimension three implies 
    \begin{equation}
        \|u\|_{L^2}^2\leq \frac{1}{\kappa^{\frac{5}{2}}} C_{GN}^3\|u\|_{L^1}^2+\kappa^{\frac{5}{3}}\|u\|_{1,2}^2.
    \end{equation}
    This can be rewritten as 
    \begin{equation}
        -\frac{\kappa^{\frac{5}{3}}}{1-\kappa^\frac{5}{3}}|u|_{1,2}^2\leq -\|u\|_{L^2}^2+\frac{C_{GN}^3}{\kappa^{\frac{5}{2}}(1-\kappa^{\frac{5}{3}})}\|u\|_{L^1}^2.
    \end{equation}
    We choose $q$ and $\kappa$ such that $2^{-qk}=\frac{\kappa^{\frac{5}{3}}}{1-\kappa^\frac{5}{3}}$. From here on, we can do the same algebra as for \eqref{eq:rhoalik2} to conclude that 
    \begin{equation}
        -\frac{2p}{p+1}|(f_h)^{\frac{p+1}{2}}|_{1,2}^2\leq-(a_k+\epsilon_k)\epsilon_k|(f_h)^{\frac{p+1}{2}}|_{1,2}^2\leq-(a_k+\epsilon_k)\|(f_h)^{\frac{p+1}{2}}\|_{L^2}^2+(a_k+\epsilon_k)c_k \|(f_h)^{\frac{p+1}{2}}\|_{L^1}^2.
    \end{equation}
\end{proof}
\section{Heatmap for 3D simulation}\label{app:fig3d}
\begin{figure}[htb]
    \centering
    \includegraphics[width=0.98\textwidth]{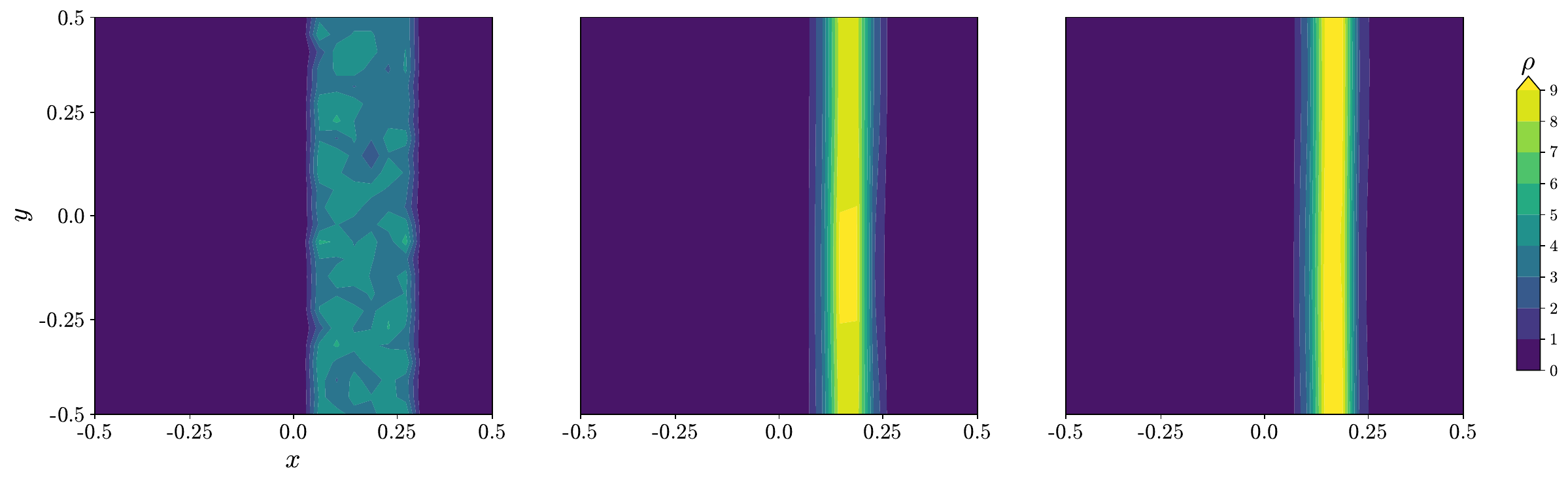}
    \caption{Evolution of $\rho(t,x,y)$ \eqref{eq:scheme-rho} with \cref{eq:btaudisc}for times $t=0,0.31,0.41$ from left to right and $D_T=10^{-2},\gamma=250,\Pe=3,\tau=0.5,\alpha=1,\Delta t=10^{-3},N_x=N_y=24,=N_\theta=16$.}\label{fig:btau}
\end{figure}
\end{document}